\newtheorem{theorem}{Theorem}[section] 
\newtheorem{definition}{Definition}[section]
\newtheorem{example}{Example}[section]
\newtheorem{corollary}{Corollary}[section]
\newtheorem{lemma}{Lemma}[section]
\newcommand{\G}[1]{\nabla #1}
\newcommand{\GW}[1]{\nabla _w #1}
\newcommand{\p}[1]{\partial #1}
\newcommand{\normT}[1]{\norm{#1}_T}
\newcommand{\normPT}[1]{\norm{#1}_{\partial T}}
\newcommand{\pa}[2]{\frac{\partial #1}{\partial #2}}
\newcommand{\trb}[1]{|\!|\!|#1|\!|\!|}
\newcommand{\norm}[1]{\left\Vert#1\right\Vert}
\def\E{{\mathcal{E}}}
\def\T{{\mathcal{T}}}
\def\sumT{\sum_{T\in\mathcal{T}_h}}
\def\bq{{\mathbf{q}}}
\def\dQ{{\mathbb{Q}}}
\def\bn{{\bf n}}
\numberwithin{equation}{section}
\begin{document}

\begin{frontmatter}

\title{A Stabilizer-Free Weak Galerkin Mixed Finite Element Method for the Biharmonic Equation }

\author[mymainaddress]{Shanshan Gu}
\ead{guss22@mails.jlu.edu.cn}
\author[mymainaddress]{Fuchang Huo\corref{mycorrespondingauthor}}
\cortext[mycorrespondingauthor]{Corresponding author}
\ead{huofc22@mails.jlu.edu.cn}
\author[mymainaddress]{Shicheng Liu}
\ead{lsc22@mails.jlu.edu.cn}

\address[mymainaddress]{School of Mathematics, Jilin University, Changchun 130012, Jilin, China}

\begin{abstract}
In this paper, we present and research a stabilizer-free weak Galerkin (SFWG) finite element method for the Ciarlet-Raviart mixed form of the Biharmonic equation on general polygonal meshes.  We utilize the SFWG solutions of the second order elliptic problem to define projection operators and build error equations. Further, we derive the $O(h^k)$ and $O(h^{k+1})$ convergence for the exact solution $u$ in the $H^1$ and $L^2$ norms with polynomials of degree $k$. Finally, numerical examples support the results reached by the theory.
\end{abstract}

\begin{keyword}

Stabilizer-free weak Galerkin finite element method,  Biharmonic equation, Ciarlet-Raviart mixed form,
projection operators.

\MSC[2020] 65N15 \sep 65N30 \sep 35J50 \sep 35J35 \sep 35G15

\end{keyword}

\end{frontmatter}

\section{Introduction}
\label{section:introduction}
Let $\Omega$ be a bounded polygonal domain in $\mathbb{R}^d$, $(d=2)$ with Lipschitz continuous boundary $\p \Omega$. We consider the Biharmonic equation as follows
    \begin{align}
    \label{Bmodel-equ1}\Delta^2u&=f, \qquad in~\Omega,\\
    \label{Bmodel-equ2}u&=0,\qquad on ~\partial \Omega,\\
    \label{Bmodel-equ3}\pa{u}{\bn}&=0,\qquad on ~\partial \Omega,
    \end{align}
    where $\bn$ is the unit outward normal vector on $\partial \Omega$.

    The variational form of $(\ref{Bmodel-equ1})-(\ref{Bmodel-equ3})$ is given as follows: find $u\in H^2_0(\Omega)$ such that
    \begin{align}
        \label{var-equ1}(\Delta u,\Delta v)=(f,v), \qquad \forall\,v\in H^2_0(\Omega),
    \end{align}
    where $H^2_0(\Omega)$ is defined by
    \begin{align*}
    H^2_0(\Omega)=\left\{v\in H^2(\Omega):~v|_{\partial\Omega}=0,~\pa{v}{\bn}|_{\partial\Omega}=0\right\}.
    \end{align*}

As a widely used technique, the conforming finite element methods have long been applied to the Biharmonic equation \cite{MR1191139, BiharmonicCFEM2, MR2817542}. They are based on the variational form (\ref{var-equ1}) and are required to construct finite element spaces as the subspaces of $H^2(\Omega)$, which need $C^1$-continuous finite elements. Due to the complexity of constructing high continuous elements, $H^2$-conforming finite element methods are seldom employed to solve the Biharmonic equation in actual computation.
    
To avoid constructing the $H^2$-conforming finite elements, nonconforming and discontinuous Galerkin finite element methods are also used to solve the equation. For example, the Morley element \cite{BiharmonicNCFEM} is a well-known nonconforming element for the Biharmonic equation, and a hp-version interior penalty discontinuous Galerkin method \cite{BiharmonicDGFEM} is proposed to solve the equation.
    
There are other ways that adopt different variational principles to deal with the problem. The hybrid finite element methods and mixed finite element methods do precisely that. For the analysis related to hybrid methods, see the paper of Brezzi \cite{MR391538} specifically. The mixed finite element methods build different variational forms from the above variational formulation by introducing an auxiliary variable. For example, the Ciarlet-Raviart form \cite{CRMFEM,BiharmonicMixFEM3} introduces a variable $\varphi = -\Delta u$ to build the variational formulation: find $u\in H^1_0(\Omega)$ and $\varphi\in H^1(\Omega)$ satisfying
\begin{align}
      \label{mixed-var-equ1}(\varphi ,v)-(\G u,\G v)&=0,\quad\quad\,\,\,\,\,\forall\, v\in H^1(\Omega),\\
      \label{mixed-var-equ2}(\G \varphi,\G \psi)&=(f,\psi),\quad\forall\, \psi\in H^1_0(\Omega).
\end{align}

This approach is more appropriate for hydrodynamics problems, where $-\Delta u$ represents vorticity. In addition, there are some other ways, such as the Hermann-Miyoshi method \cite{MR0386298} and Hermann-Johnson method \cite{HJMFEM}, which use the variable $\sigma = -\nabla ^2 u$ to build the variational form. The case is more appropriate for plate problems because of the second partial derivatives of the solution $u$, whose physical meanings are moments. For general results about mixed methods, the papers of Oden \cite{MFEMTheory} can be used as a reference.

The weak Galerkin (WG) finite element methods have been well developed as a new class of discontinuous Galerkin finite element methods in the last decade. 
The WG method employs weak differential operators to substitute classical differential operators in variational forms, facilitating the achievement of weak continuity in numerical solution through stabilizer.
 The method was first introduced in \cite{PossionWG} to solve the second order elliptic equation and achieved good results. The method is further developed by applying to more kinds of problems and modifying the definition of the weak differential operators. On the one hand, the WG method is utilized to solve the Stokes equation \cite{StokesWG}, the Biharmonic equation \cite{BiharmonicWG, BiharmonicWGReOrder}, the Brinkman equation \cite{BrinkmanWG}, etc. 
On the other hand, some scholars increase the degrees of the polynomial range space for the weak operator to eliminate the stabilizer in the WG numerical scheme \cite{PossionSFWG, StokesSFWG, BiharmonicSFWG}, which is known as the stabilizer-free weak Galerkin (SFWG) method. The SFWG method simplifies the numerical format and reduces the computation amount in the programming process.

Currently, the WG method \cite{BiharmonicWG,BiharmonicWGReOrder,SFC0WGBiharmonic} and SFWG method \cite{BiharmonicSFWG} are applied to the primal form of the Biharmonic equation. 
For the Ciarlet-Raviart mixed finite element formulation (\ref{mixed-var-equ1})-(\ref{mixed-var-equ2}), based on the Raviart-Thomas elements, a  WG numerical scheme without stabilizer for the mixed formulation is developed in \cite{BiharmonicWGMFEM}.
 However, the disadvantage of using the Raviart-Thomas elements is that the results only work on triangular meshes. In this paper, we use the SFWG method to discrete the Ciarlet-Raviart mixed formulation (\ref{mixed-var-equ1})-(\ref{mixed-var-equ2}) for the Biharmonic equation on general polytopal meshes. The convergence rates for the primal variable in the $H^1$ and $L^2$ norms are of order $O(h^k)$ and $O(h^{k+1})$, respectively.

The following is the outline of this paper. In Section \ref{Section:scheme}, we construct the SFWG numerical scheme for the Ciarlet-Raviart mixed formulation (\ref{mixed-var-equ1})-(\ref{mixed-var-equ2}). In Section \ref{Section:well-posedness}, we analyze the well-posedness of the SFWG method. In Section \ref{Section:error-analysis}, we present the error equations and give the $H^{1}$ and $L^2$ error estimates for the SFWG method. Numerical examples are shown in Section \ref{Section:Numex}. Section \ref{Section:conclusion} makes a conclusion.

\section{SFWG mixed scheme for the Biharmonic equation}\label{Section:scheme}

In this section, we shall introduce some simple notations and build the stabilizer-free numerical formulation.

Let $\T_h$ be the shape-regular partition of $\Omega$ satisfying the assumptions
in \cite{PossionMixedWG}.
Denote $\E_h$ as the set of all edges or flat faces in $\T_h$, and let $\E^0_h=\E_h\backslash\partial\Omega$ be the set of all interior edges or flat faces.
Let $h_T$ be the diameter of $T$, and denote the mesh size of $\T_{h}$ by $h=\max_{T\in\T_h}h_T$. Denote $\rho\in P_{k}(T)$
that $\rho|_{T}$ is polynomial with degree no more than $k$,
and the piecewise function space $P_{k}(e)$ is similar.

Let $K$ be an open bounded domain in $\mathbb{R} ^d$, and $s$ be a positive
integer. We use $\|\cdot\|_{s,K}$,
$|\cdot|_{s,K}$, $(\cdot,\cdot)_{s,K}$ to represent the norm, seminorm, and
inner product of the Sobolev space $H^s(K)$, respectively. We shall drop the subscript $K$ if $K=\Omega$ and drop the subscript $s$ if $s=2$. In addition, for any $a,b>0$, we use the term $a\lesssim b$ to express $a\leq Cb$, where $C>0$ is a constant independent of the mesh size.

For convenience, we use the following notations:
\begin{align*}
(v,w)_{\T_h}&=\sumT (v,w)_T=\sumT \int _T vwdT,\\
\langle v,w\rangle_{\partial \T_h}&=\sumT\langle v,w\rangle_{\partial T}=\sumT\int_{\partial T}vwds.
\end{align*}

Define the weak finite element spaces:
\begin{align}
  V_h&=\left\{v=\{v_0,v_b\},v_0|_T\in P_k(T),v_b|_e\in P_k(e),T\in \mathcal{T}_h,e\in \mathcal{E}_h\right\},\label{spa-Vh} \\
  V_h^0&=\{v=\left\{v_0,v_b\}\in V_h:~v_b|_{e}=0,~
  e\subset\partial\Omega\right\}.\label{spa-Vh0}
\end{align}

The discrete weak gradient of functions is defined as follows.
\begin{definition}\cite{PossionSFWG}
  For any $v\in V_h+H^1(\Omega)$,
  the discrete weak gradient $\GW v|_{T}\in [P_{j}(T)]^d$ satisfies
  \begin{align}\label{def-wgradient}
    (\GW v,\bq)_T = -(v_0,\G\cdot\bq)_T+\langle v_b,\bq\cdot\bn\rangle _{\p T},\qquad \forall\,\bq\in [P_j(T)]^d,
  \end{align}
  where $j$ will be defined later.
\end{definition}

In addition, we define
\begin{align*}
  a(w,v)=&~(w_0,v_0)_{\T _h}+\sumT h_T \langle w_0-w_b,v_0-v_b\rangle _{\p T},\quad\forall\, w,v\in V_h,\\
  b(v,\psi)=&~(\GW v,\GW \psi)_{\T _h},\quad\forall\, v\in V_h,\psi\in V^0_h.
\end{align*}
        
With these preparations, we can propose the SFWG numerical
scheme as follows.
\begin{algorithm}
\caption{Stabilizer-free Weak Galerkin Algorithm}
A stabilizer-free weak Galerkin numerical scheme of (\ref{mixed-var-equ1})-(\ref{mixed-var-equ2}) is seeking
$\varphi _h\times u_{h}\in V_h\times V^0_h$ such that
\begin{align}
  \label{scheme1}a(\varphi _h,v)-b(v,u_h)&=0,\quad\quad\,\,\,\,\,\,\,\,\forall\, v\in V_h,\\
  \label{scheme2}b(\varphi_h,\psi)&=(f,\psi _0),\quad\forall\,\psi\in V^0_h.
\end{align}
\end{algorithm}

By setting $v=\{1,1\}$ in (\ref{scheme1}), we can get an important fact that $\varphi_h$ has mean value zero over the domain $\Omega$. For simplicity, we define a space $\overline{V}_h\subset V_h$ as follows.
\begin{align*}
  \overline{V}_h=\left\{v=\{v_0,v_b\}\in V_h:\int_\Omega v_0dx=0\right\}.
\end{align*}

For the sake of later analysis, we introduce several $L^2$ projection operators. Let $Q_{0}$ be a locally defined $L^2$ projection operator to
$P_{k}(T)$ on each element $T\in\T_h$ and $Q_b$ be a locally defined $L^2$ projection operator to $P_k(e)$ on each edge $e\in\E_h$. Then $Q_h=\{Q_0,Q_b\}$ is the projection operator into $V_h$. Furthermore, we
define $\dQ_{h}$ as the projection to $[P_{j}(T)]^d$
in $T\in\T_h$.

\section{Well-posedness}\label{Section:well-posedness}
In this section, we first equip the space $V^0_h$ and $V_{h}$ with proper norms. Next, we use the introduced norms and the relationship between the norms to derive the well-posedness of the numerical scheme.

\begin{definition}\label{norm}
For any $v\in V_h+H^1(\Omega)$, we define the following semi-norms
\begin{align*}
\|v\|^2_{0,h}=&~a(v,v)=(v_0,v_0)_{\T _h}+\sumT h_T \langle v_0-v_b,v_0-v_b\rangle _{\p T},\\
\trb v ^2=&~b(v,v)=(\GW v,\GW v)_{\T_h},\\
\|v\|^2_{1,h}=&~\sumT \norm{\G v_0}^2_T+\sumT h_T^{-1}\norm{v_0-v_b}^2_{\partial T}.
\end{align*}
\end{definition}
Obviously, $\|\cdot\|_{0,h}$ is the norm in $V_h$. Besides, with respect to $\trb{\cdot}$ and $\norm{\cdot}_{1,h}$, there is the following relationship.

\begin{lemma}\label{lemma-norm-equ}
\cite{PossionSFWG}There exist two positive constants $C_1$ and $C_2$ such that
\begin{align}\label{norm-equ}
C_1\norm{v}_{1,h}\leq\trb v\leq C_2\norm{v}_{1,h},\quad \forall\, v\in V_h,
\end{align}
where $j=n+k-1$ ($n$ is the number of edges of the polygon) in the definition of $\GW$.
\end{lemma}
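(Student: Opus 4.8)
The plan is to prove the two inequalities separately, exploiting the defining relation \eqref{def-wgradient} for the discrete weak gradient together with the fact that $j=n+k-1$ is chosen large enough that the local test space $[P_j(T)]^d$ is rich enough to "see" all the data $(v_0,v_b)$ on the element $T$. Throughout I work element by element, since both $\trb{\cdot}^2$ and $\|\cdot\|_{1,h}^2$ are sums of local contributions, and I use the standard scaling/inverse inequalities on a shape-regular polygon $T$ (equivalence of norms on finite-dimensional polynomial spaces, with constants depending only on $k$, $n$, and the shape-regularity parameter).

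For the upper bound $\trb v\le C_2\|v\|_{1,h}$: fix $T$ and integrate by parts back in \eqref{def-wgradient}, writing
\begin{align*}
(\GW v,\bq)_T = (\G v_0,\bq)_T - \langle v_0 - v_b, \bq\cdot\bn\rangle_{\partial T}, \qquad \forall\,\bq\in[P_j(T)]^d.
\end{align*}
Taking $\bq = \GW v$ (legitimate since $\GW v|_T\in[P_j(T)]^d$), applying Cauchy–Schwarz on each term, and then a trace/inverse inequality of the form $\|\bq\cdot\bn\|_{\partial T}\lesssim h_T^{-1/2}\|\bq\|_T$ on $\partial T$, one gets $\|\GW v\|_T^2 \lesssim \big(\|\G v_0\|_T + h_T^{-1/2}\|v_0-v_b\|_{\partial T}\big)\|\GW v\|_T$; dividing through and squaring, then summing over $T$, yields the right-hand inequality with $C_2$ depending only on the stated parameters.

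For the lower bound $C_1\|v\|_{1,h}\le\trb v$: this is the substantive direction and where the choice $j=n+k-1$ is essential. The idea is that on each $T$ the bilinear functional $\bq\mapsto (\G v_0,\bq)_T-\langle v_0-v_b,\bq\cdot\bn\rangle_{\partial T}$ over $[P_j(T)]^d$ controls both $\|\G v_0\|_T$ and $h_T^{-1/2}\|v_0-v_b\|_{\partial T}$: one chooses test functions $\bq$ supported near the interior (to recover $\G v_0$, using $\G v_0\in[P_{k-1}(T)]^d\subset[P_j(T)]^d$) and test functions that are, on each of the $n$ edges, essentially the edge polynomial $v_0-v_b\in P_k$ while nearly vanishing on the other edges — this is exactly why $j$ must grow with the number of edges $n$, so that such "edge bubble" polynomials of degree $\le j$ exist. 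Making this precise on the reference configuration and scaling back gives, on each $T$,
\begin{align*}
\|\G v_0\|_T^2 + h_T^{-1}\|v_0-v_b\|_{\partial T}^2 \lesssim \|\GW v\|_T^2,
\end{align*}
and summing over $T\in\T_h$ produces the left-hand inequality. I expect the main obstacle to be this last step: constructing the edge-localized test polynomials and tracking that the resulting constant depends only on $n$, $k$, and shape-regularity (not on $h$ or on $v$) — a scaling argument combined with finite-dimensional norm equivalence handles it, but the bookkeeping with the $n$ edges is the delicate part. Since this is precisely Lemma 3.3 of \cite{PossionSFWG}, I would cite that construction rather than reproduce it in full.
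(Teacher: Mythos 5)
Your sketch is correct and follows the standard argument from \cite{PossionSFWG}: the upper bound by Cauchy--Schwarz plus the trace/inverse inequality after taking $\bq=\GW v$, and the lower bound by edge-localized test polynomials of degree $n-1+k$, which is exactly what forces $j=n+k-1$. The paper itself offers no proof of this lemma --- it is quoted directly from that reference --- so your deferral to the cited construction is precisely what the authors do, and the only loose phrasing (polynomial test functions ``supported near the interior'') is harmless at this level of detail.
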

\begin{lemma}
  $\norm{\cdot}_{1,h}$ provides a norm in $V^0_h$ and $\overline{V}_h$.
\end{lemma}
\begin{proof}
  We shall only prove the positivity property for $\norm{\cdot}_{1,h}$. Assume that $\norm{v}_{1,h}=0$ for some $v\in V^0_h$. 
  From the definition of $\norm{\cdot}_{1,h}$, we have
  \begin{align*}
  \normT{\G v_0}=0, \quad \normPT{v_0-v_b}=0,\qquad \forall\, T\in \T_h.
  \end{align*}
  Due to $\normT{\G v_0}=0,~\forall\, T\in \T_h$, we get $v_0=const$ locally on each element T. Further, by using $\normPT{v_0-v_b}=0,~\forall\, T\in \T_h$, we obtain $v_0=v_b=const$ on $\Omega$. The boundary condition of $v_b=0$ finally implies $v=\{0,0\}$ on $\Omega$. Therefore, $\norm{\cdot}_{1,h}$ is the norm in $V_h^0$.

  On the other hand, assume that $\norm{v}_{1,h}=0$ for some $v\in \overline V_h$. From the definition of $\norm\cdot _{1,h}$, we get $v_0=v_b=const$ on $\Omega$, together with the fact that $v\in\overline V_h$ implies $v=\{0,0\}$ on $\Omega$. This completes the proof.
\end{proof}

Furthermore, the norm equivalence $(\ref{norm-equ})$ implies $\trb\cdot$ is the norm in $V_h^0$ and $\overline V _h$. And we have the following relationships about $\norm{\cdot}_{1,h}$, $\norm{\cdot}_{0,h}$ and $\trb{\cdot}$.

\begin{lemma}
  For any $v\in V_h$, we have
  \begin{align}
    \label{1h0h}\norm{v}_{1,h}\lesssim &~ h^{-1}\norm v_{0,h},\\
    \label{trb0h}\trb v\lesssim &~h^{-1}\norm v_{0,h}.
  \end{align}
\end{lemma}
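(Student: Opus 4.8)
The plan is to establish both inequalities by reducing everything to the single quantity $\norm{v}_{0,h}$ using an inverse inequality on each element, together with the norm equivalence (\ref{norm-equ}) for the second bound. Since $\norm{v}_{1,h}^2 = \sumT \norm{\G v_0}_T^2 + \sumT h_T^{-1}\norm{v_0-v_b}_{\partial T}^2$, I would treat the two sums separately. For the first sum, I would invoke the standard inverse inequality $\norm{\G v_0}_T \lesssim h_T^{-1}\norm{v_0}_T$, valid because $v_0|_T \in P_k(T)$ on a shape-regular element; summing over $T$ and using $h_T \le h$ gives $\sumT \norm{\G v_0}_T^2 \lesssim h^{-2}\sumT\norm{v_0}_T^2 \le h^{-2}\norm{v}_{0,h}^2$, since $(v_0,v_0)_{\T_h}$ is one of the two (nonnegative) terms defining $\norm{v}_{0,h}^2$.

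For the second sum, the key observation is that $\norm{v}_{0,h}^2$ already contains the term $\sumT h_T\langle v_0-v_b,v_0-v_b\rangle_{\partial T} = \sumT h_T\norm{v_0-v_b}_{\partial T}^2$. Hence $\sumT h_T^{-1}\norm{v_0-v_b}_{\partial T}^2 = \sumT h_T^{-2}\big(h_T\norm{v_0-v_b}_{\partial T}^2\big) \le h^{-2}\sumT h_T\norm{v_0-v_b}_{\partial T}^2 \le h^{-2}\norm{v}_{0,h}^2$, again using $h_T \le h$. Combining the two estimates yields $\norm{v}_{1,h}^2 \lesssim h^{-2}\norm{v}_{0,h}^2$, which is (\ref{1h0h}). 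Then (\ref{trb0h}) follows immediately by applying the upper bound $\trb v \le C_2\norm{v}_{1,h}$ from Lemma \ref{lemma-norm-equ} and chaining it with (\ref{1h0h}).

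I do not expect a genuine obstacle here; the argument is essentially bookkeeping with mesh-size powers. The one point requiring a little care is the inverse inequality $\norm{\G v_0}_T \lesssim h_T^{-1}\norm{v_0}_T$: it holds for polynomials of fixed degree on a shape-regular family, but strictly speaking one should note that the constant depends on $k$ and on the shape-regularity parameters of $\T_h$ (including the bound on the number of edges per polygon), not on $h$ — this is consistent with the $\lesssim$ convention fixed earlier in the paper. A second minor subtlety is that the trace/boundary terms are handled without any trace inequality at all, because $\norm{v_0-v_b}_{\partial T}$ appears directly on both sides; no lifting of $v_b$ into the interior is needed. So the proof is short: state the inverse inequality, bound each of the two sums in $\norm{v}_{1,h}^2$ by $h^{-2}\norm{v}_{0,h}^2$, add, and invoke Lemma \ref{lemma-norm-equ} for the $\trb\cdot$ version.
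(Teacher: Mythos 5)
Your argument is essentially identical to the paper's: apply the elementwise inverse inequality $\norm{\G v_0}_T\lesssim h_T^{-1}\norm{v_0}_T$, pull a global factor $h^{-2}$ out of both sums, recognize what remains as $\norm{v}_{0,h}^2$, and then get (\ref{trb0h}) from Lemma \ref{lemma-norm-equ}; no trace inequality is needed, exactly as you observe. One slip in the justification: the bound $h_T^{-2}\le h^{-2}$ does \emph{not} follow from $h_T\le h$ — that inequality gives $h_T^{-2}\ge h^{-2}$, the reverse direction — so what is actually required is $h\lesssim h_T$ uniformly over $T$, i.e., quasi-uniformity of the family of meshes (the paper performs the identical step and tacitly relies on the same assumption, folded into its shape-regularity hypotheses). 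With the direction of that mesh-size inequality corrected, your proof is complete and matches the paper's.
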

\begin{proof}
  From the definition of $\norm\cdot _{1,h}$, the inverse inequality \cite[Lemma A.6]{PossionMixedWG}, and the definition of $\norm\cdot _{0,h}$, we obtain
  \begin{align*}
    \norm v_{1,h}^2=&~\sumT (\normT{\G v_0}^2+h^{-1}_T\normPT{v_0-v_b}^2)\\
    \lesssim&~\sumT (h^{-2}_T\normT{v_0}^2+h^{-1}_T\normPT{v_0-v_b}^2)\\
    \lesssim&~h^{-2}\sumT (\normT{v_0}^2+h_T\normPT{v_0-v_b}^2)\\
    \lesssim&~h^{-2}\norm{v}_{0,h}^2,
  \end{align*}
  which implies (\ref{1h0h}). Consequently, we have (\ref{trb0h}) by using Lemma $\ref{lemma-norm-equ}$.
\end{proof}

\begin{lemma}\label{lemma-0h-trb}
  We have the following estimates:
  \begin{align}
    \label{0htrb}\norm{v}_{0,h}\lesssim&~\trb v,\qquad\forall\, v\in V_h^0,\\
    \label{0htrbline}\norm{v}_{0,h}\lesssim&~\trb v,\qquad\forall\, v\in \overline V_h.
  \end{align}
\end{lemma}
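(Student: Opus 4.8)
The strategy is a compactness / norm-equivalence argument exploiting that both quantities are norms on the relevant finite-dimensional-per-element but infinite-dimensional-globally spaces; more precisely, since $V_h^0$ and $\overline V_h$ are finite-dimensional for a fixed mesh, the cleanest route is a scaling argument. First I would establish the claim on a single element (or small patch) by a standard Bramble–Hilbert / reference-element scaling: on a reference configuration, $\trb\cdot$ controls the full $H^1$-type behavior of $v_0$ together with the jump $v_0-v_b$ (by Lemma \ref{lemma-norm-equ}, $\trb v\simeq\norm v_{1,h}$), so it suffices to show $\norm v_{0,h}\lesssim\norm v_{1,h}$ for $v\in V_h^0$ and $v\in\overline V_h$. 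Writing out $\norm v_{0,h}^2=\sum_T(\norm{v_0}_T^2+h_T\norm{v_0-v_b}_{\partial T}^2)$, the second term is immediately dominated by $h_T^2\cdot h_T^{-1}\norm{v_0-v_b}_{\partial T}^2\le h^2\norm v_{1,h}^2$, so the whole difficulty is reduced to bounding $\sum_T\norm{v_0}_T^2$ by $\norm v_{1,h}^2$ (up to a constant independent of $h$).

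That remaining bound is a discrete Poincaré–Friedrichs inequality: one must show $\norm{v_0}\lesssim\norm v_{1,h}$ for $v=\{v_0,v_b\}$ in $V_h^0$ (homogeneous boundary values) and in $\overline V_h$ (zero mean). For $V_h^0$ I would cite or reproduce the standard WG Poincaré inequality (it appears in the references \cite{PossionMixedWG}; the idea is to compare $v_0$ with the piecewise-constant-plus-boundary data, control the interelement jumps through the $h_T^{-1}\norm{v_0-v_b}_{\partial T}^2$ term, and invoke the continuous Poincaré inequality on $H_0^1(\Omega)$ after a suitable lifting). For $\overline V_h$ the same argument applies with the zero-mean normalization replacing the boundary condition: the mean-value-zero constraint $\int_\Omega v_0\,dx=0$ rules out the constant mode, which is exactly the kernel of $\norm\cdot_{1,h}$, so the continuous Poincaré–Wirtinger inequality can be used in the lifting step. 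In both cases the constant depends only on $\Omega$ and the shape-regularity of $\T_h$, not on $h$.

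The main obstacle is the discrete Poincaré inequality itself — specifically making the lifting/comparison argument rigorous for general polygonal meshes (not just simplicial), so that the jump terms $\langle v_0-v_b,\cdot\rangle_{\partial T}$ are correctly absorbed and the constant stays $h$-independent. The cleanest way around this is to not reprove it from scratch but to invoke the already-established norm equivalence (Lemma \ref{lemma-norm-equ}) together with the known WG Poincaré–Friedrichs estimate from \cite{PossionMixedWG}, reducing the present lemma to the elementary algebraic manipulation sketched above. I expect the proof to be short: state $\norm v_{0,h}^2=\sum_T\norm{v_0}_T^2+\sum_T h_T\norm{v_0-v_b}_{\partial T}^2$, bound the boundary sum by $h^2\norm v_{1,h}^2\lesssim\trb v^2$, bound the volume sum by the discrete Poincaré inequality, and combine.
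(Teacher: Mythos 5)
Your reduction is sound and isolates the right difficulty: after discarding the boundary sum via $\sumT h_T\norm{v_0-v_b}_{\partial T}^2\le h^2\sumT h_T^{-1}\norm{v_0-v_b}_{\partial T}^2\lesssim h^2\trb v^2$, everything rests on a discrete Poincar\'e--Friedrichs inequality $\norm{v_0}\lesssim\trb v$ on $V_h^0$ and $\overline V_h$. The paper proves exactly this inequality, but by a different device than the averaging/lifting argument you sketch: it invokes a right inverse of the divergence from \cite{BiharmonicWGMFEM}, i.e.\ a field $\bq\in[H^1(\Omega)]^2$ with $\G\cdot\bq=v_0$ and $\norm{\bq}_1\lesssim\norm{v_0}$ (with the additional property $\bq\cdot\bn=0$ on $\partial\Omega$ in the case of $\overline V_h$, which is where the zero-mean constraint enters), writes $\norm{v_0}^2=\sumT(v_0,\G\cdot\bq)_T$, and after integration by parts and the definition of $\GW$ arrives at $\norm{v_0}^2=-(\GW v,\dQ_h\bq)_{\T_h}+\sumT\langle v_0-v_b,(\bq-\dQ_h\bq)\cdot\bn\rangle_{\partial T}\lesssim\trb v\,\norm{v_0}$. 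This duality-with-divergence-lifting argument works verbatim on general polytopal meshes and treats the Dirichlet and zero-mean cases uniformly, whereas your route must separately establish the WG Poincar\'e inequality (Friedrichs version on $V_h^0$, Wirtinger version on $\overline V_h$) on polygonal elements; that inequality is true and provable by the comparison-with-averages argument you describe, but you leave it essentially as a citation, and the zero-mean variant in particular is not simply quotable from \cite{PossionMixedWG}. Your opening suggestion of a compactness/scaling argument should also be dropped: finite-dimensionality of $V_h$ only yields a constant depending on $h$, which is precisely what must be avoided; you correctly abandon it, but it is a red herring.
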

\begin{proof}
  For any $v\in V_h^0$, it follows from \cite{BiharmonicWGMFEM} that there exists $\bq\in [H^1(\Omega)]^2$ such that $\G\cdot\bq=v_0$ and $\norm{\bq}_1\lesssim\norm{v_0}$. Then we obtain
  \begin{align*}
    \norm{v_0}^2 =&~ \sumT (v_0,\G\cdot\bq)_T\\
    =&~\sumT (-(\G v_0,\bq)_T+\langle v_0,\bq\cdot\bn\rangle _{\partial T})\\
    =&~\sumT (-(\G v_0,\mathbb{Q} _h\bq)_T+\langle v_0-v_b,\bq\cdot\bn\rangle _{\partial T})\\
    =&~\sumT ((v_0,\G\cdot\mathbb{Q} _h\bq)_T-\langle v_0,\mathbb{Q} _h\bq\cdot\bn \rangle_{\partial T} +\langle v_0-v_b,\bq\cdot\bn\rangle _{\partial T})\\
    =&~\sumT\left(-(\GW v,\mathbb Q_h\bq)_T+\langle v_b,\mathbb{Q} _h\bq\cdot\bn \rangle_{\partial T}-\langle v_0,\mathbb{Q} _h\bq\cdot\bn \rangle_{\partial T}+\langle v_0-v_b,\bq\cdot\bn\rangle _{\partial T}\right)\\
    =&~-(\GW v,\mathbb Q_h\bq)_{\T_h}+\sumT\langle v_0-v_b,(\bq-\mathbb Q_h\bq)\cdot\bn\rangle_{\partial T},
  \end{align*}
  where we have used the integration by parts, $\langle v_b,\bq\cdot\bn\rangle _{\p\T _h}=0$ and the definition of $\G _w$.\\
  By Cauchy-Schwarz inequality and $\norm{\bq}_1\lesssim\norm{v_0}$, we have
  \begin{align*}
    |(\GW v,\mathbb Q_h\bq)_{\T_h}|\lesssim&~\trb v\norm{\mathbb Q_h\bq}\\
    \lesssim&~\trb v\norm{\bq}\\
    \lesssim&~\trb v\norm{\bq}_1\\
    \lesssim&~\trb v\norm{v_0}.
  \end{align*}
  For $\sumT\langle v_0-v_b,(\bq-\mathbb Q_h\bq)\cdot\bn\rangle_{\partial T}$, we have
  \begin{align*}
    \sumT\langle v_0-v_b,(\bq-\mathbb Q_h\bq)\cdot\bn\rangle_{\partial T}\lesssim&~\left(\sumT h_T^{-1}\normPT{v_0-v_b}^2\right)^{\frac{1}{2}}\left(\sumT h_T\normPT{\bq-\mathbb Q_h\bq}\right)^{\frac{1}{2}}\\
    \lesssim&~h\norm{v}_{1,h}\norm{\bq}_1\\
    \lesssim&~h\trb v\norm{v_0},
  \end{align*}
  where we have used the Cauchy-Schwarz inequality, the trace inequality  \cite[Lemma A.3] {PossionMixedWG}, and the projection inequality \cite[Lemma 4.1]{PossionMixedWG}, and $\norm{\bq}_1\lesssim\norm{v_0}$.
  \\
  Thus, we get $\norm{v_0}^2 \lesssim \trb v\norm{v_0}$, which implies that
  \begin{align*}
    \norm{v_0}\lesssim \trb v.
  \end{align*}
\\
  According to the proof of Lemma \ref{lemma-norm-equ}, we have
  \begin{align}
    h\normPT{v_0-v_b}^2\lesssim&~h^{-1}\normPT{v_0-v_b}^2 \nonumber\\
    \lesssim&~\norm{\GW v}_T^2.
  \end{align}
  \\
  Combining the above results, we get (\ref{0htrb}).

  For (\ref{0htrbline}), if $v\in\overline{V}_h$, according to \cite{BiharmonicWGMFEM}, one may find a vector-valued function $\bq$ satisfying $\G\cdot \bq=v_0$ and $\bq\cdot\bn = 0$ on $\partial\Omega$. Apart from this, we have $\norm{\bq}_1\lesssim\norm{v_0}$. The remaining proof of the (\ref{0htrbline}) is similar to (\ref{0htrb}).

\end{proof}
  From Lemma \ref{lemma-norm-equ} and Lemma \ref{lemma-0h-trb}, we obtain
  \begin{align}
    \label{0h1h}\norm{v}_{0,h}\lesssim&~\norm{v}_{1,h},\qquad\forall\, v\in V_h^0,\\
    \label{0h1hline}\norm{v}_{0,h}\lesssim&~\norm{v}_{1,h},\qquad\forall\, v\in \overline V_h.
  \end{align}

  For all $\psi _h\in V_h^0$, we define
  \begin{align*}
    \trb{\psi _h}_1 = \left(\sumT\normT{Q_0(\G\cdot\GW\psi_h)}^2+\sum_{e\in\E_h} h^{-1}\norm{Q_b[\GW\psi_h]}^2_e\right)^\frac{1}{2},
  \end{align*}
  where $[\GW\psi_h]|_e=(\GW\psi_h)|_{\p T_1\cap e}\cdot\bn _1+(\GW\psi_h)|_{\p T_2\cap e}\cdot\bn _2$ for any internal edge $e\in \E_h^0$, $T_1$, $T_2$ are the elements sharing the edge $e$ and $\bn_1$, $\bn_2$ are the unit outward normal vectors of $T_1,T_2$ on $e$. When the edge $e$ is on the boundary $\partial\Omega$, which is the part of element boundary of $T$ and $\bn$ is the unit outward normal vector of $T$ on $e$, $[\GW\psi_h]|_e=(\GW\psi_h)|_{\p T\cap e}\cdot\bn$.

  Then, we have the following conclusions.

\begin{lemma}
  $\trb\cdot _1$ is a norm in $V_h^0$, and we have
  \begin{align}
    \label{bbound}b(v,\psi)\lesssim\norm{v}_{0,h}\trb\psi _1,\qquad\forall\, v\in V_h,\psi\in V_h^0,\\
    \label{infsup}\sup _{\forall\, v\in V_h} \frac{b(v,\psi)}{\norm{v}_{0,h}}\gtrsim \trb\psi _1\gtrsim \trb\psi,\qquad\forall\,\psi\in V^0_h.
  \end{align}
\end{lemma}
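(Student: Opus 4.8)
The plan is to establish the three assertions in order: first that $\trb\cdot_1$ is a norm on $V_h^0$, then the boundedness estimate (\ref{bbound}), and finally the inf-sup condition (\ref{infsup}).

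\textbf{Norm property.} To show $\trb\cdot_1$ is a norm on $V_h^0$, only positivity needs checking. Suppose $\trb{\psi_h}_1 = 0$ for $\psi_h \in V_h^0$. Then $Q_0(\nabla\cdot\nabla_w\psi_h) = 0$ on every $T$ and $Q_b[\nabla_w\psi_h] = 0$ on every edge $e$. Since $\nabla_w\psi_h|_T \in [P_j(T)]^d$, I will test the weak-gradient definition (\ref{def-wgradient}) with $\bq = \nabla_w\psi_h$ to write $\trb{\psi_h}^2 = (\nabla_w\psi_h, \nabla_w\psi_h)_{\T_h} = -(\psi_0, \nabla\cdot\nabla_w\psi_h)_{\T_h} + \langle \psi_b, \nabla_w\psi_h\cdot\bn\rangle_{\p\T_h}$. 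Using integration by parts on the first term to rewrite it, and then using that the $L^2$-projections of $\nabla\cdot\nabla_w\psi_h$ (onto $P_k(T)$, which contains $\psi_0$) and of the edge jumps $[\nabla_w\psi_h]$ (onto $P_k(e)$, which contains $\psi_b$) vanish, I expect every term to drop out, forcing $\trb{\psi_h} = 0$, hence $\psi_h = 0$ since $\trb\cdot$ is already a norm on $V_h^0$.

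\textbf{Boundedness (\ref{bbound}).} Starting from $b(v,\psi) = (\nabla_w v, \nabla_w\psi)_{\T_h}$, I will unfold $\nabla_w v$ via (\ref{def-wgradient}) with $\bq = \nabla_w\psi$, obtaining $b(v,\psi) = -(v_0, \nabla\cdot\nabla_w\psi)_{\T_h} + \langle v_b, \nabla_w\psi\cdot\bn\rangle_{\p\T_h}$. Then I split $\langle v_b, \nabla_w\psi\cdot\bn\rangle_{\p\T_h} = \langle v_b - v_0, \nabla_w\psi\cdot\bn\rangle_{\p\T_h} + \langle v_0, \nabla_w\psi\cdot\bn\rangle_{\p\T_h}$ and apply integration by parts elementwise to the last piece to recombine it with $-(v_0, \nabla\cdot\nabla_w\psi)_{\T_h}$; after collecting across shared edges this produces the jump term $\langle v_0, [\nabla_w\psi]\rangle$ over $\E_h$ (the $v_b$ contributions on interior edges cancel since $v_b$ is single-valued). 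The resulting expression pairs $v_0$ against $\nabla\cdot\nabla_w\psi$ and $v_0-v_b$ (or $v_0$) against edge jumps of $\nabla_w\psi$. Inserting $L^2$-projections $Q_0$ and $Q_b$ for free (since $v_0 \in P_k(T)$, $v_b \in P_k(e)$), then applying Cauchy--Schwarz edge-by-edge and element-by-element, the two factors become exactly $\norm{v}_{0,h}$ and $\trb\psi_1$ — matching the $h_T$-weights in the definition of $\norm\cdot_{0,h}$ against the $h^{-1}$-weights in $\trb\cdot_1$.

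\textbf{Inf-sup (\ref{infsup}).} The second inequality $\trb\psi_1 \gtrsim \trb\psi$ I will obtain by a direct estimate: bound $\trb\psi^2 = (\nabla_w\psi,\nabla_w\psi)_{\T_h}$ using the same integration-by-parts identity applied with $v = \psi$ itself (or a suitable comparison), controlling it by $\trb\psi_1$ times a norm of $\psi$ that is itself $\lesssim \trb\psi$ via Lemma \ref{lemma-0h-trb} and Lemma \ref{lemma-norm-equ}. For the first inequality, the standard route is to exhibit, for each fixed $\psi \in V_h^0$, a test function $v \in V_h$ with $b(v,\psi) \gtrsim \trb\psi_1^2$ and $\norm{v}_{0,h} \lesssim \trb\psi_1$; the natural choice is $v_0 = -Q_0(\nabla\cdot\nabla_w\psi)$ on each element together with $v_b$ chosen so that $v_0 - v_b$ on each edge reproduces $h\,Q_b[\nabla_w\psi]$ up to sign (so that the boundary sum in the identity above yields the jump-squared contribution). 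Plugging this $v$ into the identity from the boundedness step makes $b(v,\psi)$ equal to $\sum_T\norm{Q_0(\nabla\cdot\nabla_w\psi)}_T^2 + \sum_e h^{-1}\norm{Q_b[\nabla_w\psi]}_e^2 = \trb\psi_1^2$, while the companion bound $\norm{v}_{0,h} \lesssim \trb\psi_1$ follows from the definition of $\norm\cdot_{0,h}$ and the construction of $v_b$. Dividing gives the inf-sup bound.

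\textbf{Main obstacle.} The delicate point is the inf-sup direction: I must construct $v_b$ on edges so that simultaneously $v_0 - v_b$ feeds the jump term correctly \emph{and} $v_b$ remains a well-defined single-valued element of $P_k(e)$ (with $v \in V_h$, not $V_h^0$, so no boundary constraint — this is why the supremum is over all of $V_h$). Getting the bookkeeping of interior-edge cancellations right in the integration-by-parts identity, and verifying that the $h$-weights line up so that $\norm{v}_{0,h} \lesssim \trb\psi_1$ rather than something weaker, is where the real care is needed; everything else reduces to Cauchy--Schwarz, trace, and inverse inequalities already cited from \cite{PossionMixedWG}.
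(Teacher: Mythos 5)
Your overall strategy coincides with the paper's: kill all terms with the projections for the norm property, derive the identity $b(v,\psi)=-\sumT(v_0,Q_0(\G\cdot\GW\psi))_T+\sum_{e\in\E_h}\langle v_b,Q_b[\GW\psi]\rangle_e$ for boundedness, deduce $\trb\psi\lesssim\trb\psi_1$ from (\ref{bbound}) combined with $\norm{\psi}_{0,h}\lesssim\trb\psi$, and prove the inf-sup bound with an explicit test function whose interior part is $-Q_0(\G\cdot\GW\psi)$. Two points in your write-up, however, would not survive being made precise.

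First, in the boundedness step your reshuffling of the edge terms is off. After unfolding $\GW v$ against $\bq=\GW\psi$, the boundary contribution is $\langle v_b,\GW\psi\cdot\bn\rangle_{\p{\T_h}}$, and it is $v_b$ --- the single-valued trace --- that collects into $\sum_{e}\langle v_b,[\GW\psi]\rangle_e$; you cannot move $v_0$ into the jump pairing, since $v_0$ takes two different values on an interior edge, and the $v_b$ contributions certainly do not cancel there (they are precisely what forms the jump). The paper keeps $v_b$ in place, inserts $Q_0,Q_b$ for free, and only afterwards bounds $\sum_{e}h\norm{v_b}_e^2\lesssim\sumT\bigl(h\norm{v_0-v_b}_{\p T}^2+h\norm{v_0}_{\p T}^2\bigr)\lesssim\norm{v}_{0,h}^2$ via the trace and inverse inequalities. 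Second, and more seriously, your inf-sup test function is mis-specified: since the jump contribution to $b(v,\psi)$ enters through $\langle v_b,Q_b[\GW\psi]\rangle_e$, the edge component must be prescribed directly as $v_b^\ast=h^{-1}Q_b[\GW\psi]$ (note the weight $h^{-1}$, forced by the $h^{-1}$ appearing in $\trb{\cdot}_1$). Prescribing $v_0-v_b=\pm h\,Q_b[\GW\psi]$ instead is both ill-posed ($v_0$ is two-valued on interior edges, so this does not determine a single-valued $v_b\in P_k(e)$) and carries the wrong weight, so $b(v^\ast,\psi)$ would not reduce to $\trb\psi_1^2$. With the paper's choice $v^\ast=\{-Q_0(\G\cdot\GW\psi),\,h^{-1}Q_b[\GW\psi]\}$ one gets $b(v^\ast,\psi)=\trb\psi_1^2$ exactly and $\norm{v^\ast}_{0,h}\lesssim\trb\psi_1$ by the trace and inverse inequalities, which is what closes the argument.
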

\begin{proof}
  We only prove $\psi=0$ if $\trb\psi _1=0$ to verify that $\trb\cdot _1$ is a norm. For any $\psi\in V^0_h$, if $\trb\psi _1=0$, we have
  \begin{align*}
    Q_0(\G\cdot\GW\psi_h)|_T=&~0,\quad\forall\, T\in \T_h,\\
    Q_b[\GW\psi_h]|_e=&~0,\quad\forall\, e\in\E_h.
  \end{align*}
  Further, from the definitions of $\GW$, $[\cdot]$, and $Q_h$, we have
  \begin{align*}
    (\GW \psi,\GW \psi)=&~-\sumT (\psi _0,\G\cdot\GW\psi)_T + \sum _{e\in\E_h}\langle \psi _b,[\GW \psi]\rangle _e\\
    =&~-\sumT (\psi _0,Q_0(\G\cdot\GW\psi))_T + \sum _{e\in\E_h}\langle \psi _b,Q_b[\GW \psi]\rangle _e\\
    =&~0,
  \end{align*}
  which shows $\trb\psi =0$. Since $\trb\cdot$ is a norm in $V^0_h$, we have $\psi =0$. Thus, $\trb\cdot _1$ is a norm in $V^0_h$.

  For (\ref{bbound}), by using the definition of $\GW$, the Cauchy-Schwarz inequality, the trace inequality, and the inverse inequality, we acquire
  \begin{align*}
    b(v,\psi)=&~\sumT (\GW v,\GW \psi)_T\\
    =&~\sumT ((-v_0,\G\cdot\GW\psi)_T+\langle v_b,\GW\psi\cdot\bn\rangle _{\partial T})\\
    =&~-\sumT (v_0,\G\cdot\GW\psi)_T + \sum_{e\in\E_h}\langle v_b,[\GW\psi] \rangle_e\\
    =&~-\sumT (v_0,Q_0(\G\cdot\GW\psi))_T + \sum_{e\in\E_h}\langle v_b,Q_b[\GW\psi]\rangle _e\\
    \leq&~ \left(\sumT \norm{v_0}_T^2\right)^{\frac{1}{2}}\left(\sumT\normT{Q_0(\G\cdot\GW\psi)}^2\right)^{\frac{1}{2}} \\
    &~+ \left(\sum_{e\in\E_h} h\norm{v_b}_e^2\right)^{\frac{1}{2}}\left(\sum_{e\in\E_h} h^{-1}\norm{Q_b[\GW\psi]}_e^2\right)^{\frac{1}{2}}\\
    \leq&~\left(\sumT (\norm{v_0}_T^2+h\norm{v_b}_{\partial T}^2)\right)^{\frac{1}{2}}\trb\psi _1\\
    \leq&~\left(\sumT (\norm{v_0}_T^2+h\norm{v_0-v_b}_{\partial T}^2+h\norm{v_0}_{\partial T}^2)\right)^{\frac{1}{2}}\trb\psi _1\\
    \lesssim&~ \norm{v}_{0,h}\trb\psi _1.
  \end{align*}
By using (\ref{0htrb}) gives
\begin{align*}
  \trb\psi ^2\lesssim&~ \norm{\psi}_{0,h}\trb\psi _1\\
  \lesssim&~\trb\psi\trb\psi _1,
\end{align*}
which yields
\begin{align}
  \label{trbtrb1}\trb\psi\lesssim\trb\psi _1,\qquad\forall\,\psi\in V_h^0.
\end{align}

  Next, we prove (\ref{infsup}). For any $\psi\in V^0_h$, we choose $v^\ast=\{-Q_0(\G\cdot\GW\psi),h^{-1}Q_b[\GW\psi]\}\in V_h$, and use the definitions of the $\GW$ and $Q_h$, which yields
  \begin{align*}
    b(v^\ast,\psi)=&~\sumT (\GW v^\ast,\GW \psi)_T\\
    =&~\sumT ((-v_0^\ast,\G\cdot\GW\psi)_T+\langle v_b^\ast,\GW\psi\cdot\bn\rangle _{\partial T})\\
    =&~-\sumT (v_0^\ast,\G\cdot\GW\psi)_T + \sum_{e\in\E_h}\langle v_b^\ast,[\GW\psi]\rangle_e\\
    =&~\sumT (Q_0(\G\cdot\GW\psi),\G\cdot\GW\psi)_T + \sum_{e\in\E_h}\langle h^{-1}Q_b[\GW\psi],[\GW\psi]\rangle _e\\
    =&~\sumT (Q_0(\G\cdot\GW\psi),Q_0(\G\cdot\GW\psi))_T + \sum_{e\in\E_h}\langle h^{-1}Q_b[\GW\psi],Q_b[\GW\psi]\rangle _e\\
    =&~\sumT \norm{Q_0(\G\cdot\GW\psi)}^2_T + \sum_{e\in\E_h}h^{-1}\norm{Q_b[\GW\psi]}_e^2\\
    =&~\trb\psi _1^2.
  \end{align*}
  Using the definitions of the $\norm{\cdot}_{0,h}$ and $v^\ast$, the trace inequality, and the inverse inequality, we have
  \begin{align*}
    \norm{v^\ast}^2_{0,h} =&~ \sumT \norm{v_0^\ast}_T^2 +\sumT h\norm{v_0^\ast-v_b^\ast}_{\partial T}^2\\
    =&~ \sumT \norm{-Q_0(\G\cdot\GW\psi)}_T^2 +\sumT h\norm{-Q_0(\G\cdot\GW\psi)-h^{-1}Q_b[\GW\psi]}_{\partial T}^2\\
    \lesssim&~ \sumT \norm{-Q_0(\G\cdot\GW\psi)}_T^2 +\sumT h\norm{Q_0(\G\cdot\GW\psi)}_{\partial T}^2 \\
    &~+\sumT h\norm{h^{-1}Q_b[\GW\psi]}_{\partial T}^2\\
    \lesssim&~ \sumT \norm{Q_0(\G\cdot\GW\psi)}_T^2 +\sumT h^{-1}\norm{Q_b[\GW\psi]}_{\partial T}^2\\
    \lesssim&~\trb\psi _1^2.
  \end{align*}
  Thus, we arrive at
  \begin{align*}
    \sup _{\forall v\in V_h} \frac{b(v,\psi)}{\norm{v}_{0,h}}\gtrsim \frac{b(v^\ast,\psi)}{\norm{v^\ast}_{0,h}}\gtrsim \trb\psi _1\gtrsim \trb\psi,
  \end{align*}
  which implies (\ref{infsup}).
\end{proof}

\begin{theorem}
  The numerical scheme (\ref{scheme1})-(\ref{scheme2}) exists a unique solution.
\end{theorem}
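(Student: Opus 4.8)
The plan is to exploit the fact that (\ref{scheme1})--(\ref{scheme2}) is a square linear system posed on the finite-dimensional space $V_h\times V^0_h$: the number of unknowns ($\dim V_h+\dim V^0_h$) equals the number of test equations, so existence of a solution is equivalent to uniqueness. Hence it suffices to show that the homogeneous problem, obtained by setting $f=0$, admits only the trivial solution $\varphi_h=\{0,0\}$, $u_h=\{0,0\}$.

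First I would put $f=0$, take $v=\varphi_h$ in (\ref{scheme1}) and $\psi=u_h$ in (\ref{scheme2}), and add the two identities; the coupling terms $b(\varphi_h,u_h)$ cancel, leaving $a(\varphi_h,\varphi_h)=0$, i.e. $\|\varphi_h\|_{0,h}^2=0$. Since $\|\cdot\|_{0,h}$ is a norm on $V_h$, this forces $\varphi_h=\{0,0\}$. With $\varphi_h=0$, equation (\ref{scheme1}) collapses to $b(v,u_h)=0$ for every $v\in V_h$, and the inf-sup bound (\ref{infsup}) then yields
\[
0=\sup_{v\in V_h}\frac{b(v,u_h)}{\|v\|_{0,h}}\gtrsim\trb{u_h}_1\gtrsim\trb{u_h},
\]
so $\trb{u_h}=0$; because $\trb{\cdot}$ is a norm on $V^0_h$, we conclude $u_h=\{0,0\}$. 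Thus the homogeneous system has only the zero solution, and the scheme is uniquely solvable.

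I do not expect a genuine obstacle here: the substantive ingredients — that $\|\cdot\|_{0,h}$ is a norm on $V_h$ (coercivity of $a(\cdot,\cdot)$), that $\trb{\cdot}$ is a norm on $V^0_h$, and above all the inf-sup condition (\ref{infsup}) for $b(\cdot,\cdot)$ — have already been established in the preceding lemmas, so the remaining argument is the standard abstract saddle-point reasoning. The only point requiring a word of care is the justification of the reduction to uniqueness, namely that the discrete trial and test spaces are finite-dimensional with matching dimensions so that (\ref{scheme1})--(\ref{scheme2}) is indeed a square system; the observation that $\varphi_h$ automatically has mean value zero is then a consequence of the equations rather than an extra constraint, so it does not affect the count.
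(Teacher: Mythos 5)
Your argument is correct and follows the same basic structure as the paper's: test the (homogeneous) system with the solution itself, add the two equations so the $b$-terms cancel, conclude $\varphi_h=0$ from the coercivity of $a(\cdot,\cdot)$ (i.e.\ $\|\cdot\|_{0,h}$ being a norm on $V_h$), and then deduce $u_h=0$ from $b(v,u_h)=0$ for all $v\in V_h$. The one place you diverge is the last step: the paper does not invoke the inf-sup condition (\ref{infsup}) at all, but simply observes that $u_h\in V_h^0\subset V_h$ is itself an admissible test function, so taking $v=u_h$ gives $\trb{u_h}^2=b(u_h,u_h)=0$ and hence $u_h=0$ since $\trb{\cdot}$ is a norm on $V_h^0$. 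Your route through (\ref{infsup}) is valid but uses a heavier tool than necessary for this particular conclusion. On the other hand, you are more careful than the paper on the existence half: the paper only verifies uniqueness of two putative solutions and declares the proof complete, implicitly relying on the square finite-dimensional linear-system argument that you spell out explicitly (trial space $V_h\times V_h^0$ matches the test space, so injectivity implies surjectivity). Making that reduction explicit is a genuine improvement in rigor, and your remark that the zero-mean property of $\varphi_h$ is a consequence of the equations rather than an imposed constraint correctly disposes of the only possible objection to the dimension count.
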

\begin{proof}
  Let $(\varphi _h^1, u_h^1)$ and $(\varphi _h^2, u_h^2)$ are two solutions of the numerical scheme (\ref{scheme1})-(\ref{scheme2}), then we have
  \begin{align}
    \label{scheme11}a(\varphi _h^1-\varphi _h^2,v)-b(v,u_h^1-u_h^2)=&~0,\qquad\forall\, v\in V_h,\\
    \label{scheme22}b(\varphi _h^1-\varphi _h^2,\psi)=&~0,\qquad\forall\, \psi\in V_h^0.
  \end{align}
  Let $v=\varphi _h^1-\varphi _h^2, \psi=u_h^1-u_h^2$ the difference between the two solutions, and add the two equations together, then we obtain
  \begin{align*}
    \norm{\varphi _h^1-\varphi _h^2}_{0,h}^2=0,
  \end{align*}
  which implies $\varphi _h^1=\varphi _h^2$. Furthermore, by (\ref{scheme11}), we have
  \begin{align*}
    b(v,u_h^1-u_h^2)=0,\qquad\forall v\in V_h.
  \end{align*}
By taking $v=u_h^1-u_h^2$, we get $\trb{u_h^1-u_h^2}=0$, which implies $u_h^1=u_h^2$. The proof is completed.
\end{proof}

\begin{lemma}
  For any $v\in H^{m+1}(\Omega)$, $(0\leq m\leq k)$, there holds
  \begin{align*}
    \norm{v-Q_hv}_{0,h}\lesssim h^m \norm{v}_m.
  \end{align*}
\end{lemma}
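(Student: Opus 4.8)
Our plan is to expand $\norm{\cdot}_{0,h}$ through its definition, split the result into a volumetric part and a boundary part, and then bound each part using only the $L^2$-projection approximation property and the trace inequality from \cite{PossionMixedWG}. Recalling Definition \ref{norm} and the formula for $a(\cdot,\cdot)$, and noting that the two components of $v-Q_hv\in V_h+H^1(\Omega)$ are $v-Q_0v$ on each $T$ and $v-Q_bv$ on each $e$, we have
\begin{align*}
\norm{v-Q_hv}_{0,h}^2=\sumT\normT{v-Q_0v}^2+\sumT h_T\normPT{(v-Q_0v)-(v-Q_bv)}^2.
\end{align*}
The first sum is handled directly by the projection estimate \cite[Lemma 4.1]{PossionMixedWG}, $\normT{v-Q_0v}\lesssim h_T^{m}\norm{v}_{m,T}$, which after summation and the bound $h_T\le h$ gives $\sumT\normT{v-Q_0v}^2\lesssim h^{2m}\norm{v}_m^2$.

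The only step that is not completely routine is the boundary sum. On each edge $e\subset\partial T$ one has $Q_0v|_e\in P_k(e)$ because $Q_0v\in P_k(T)$, so $Q_b$ acts as the identity on $Q_0v|_e$; hence on $\partial T$
\begin{align*}
(v-Q_0v)-(v-Q_bv)=Q_bv-Q_0v=Q_b\bigl(v-Q_0v\bigr).
\end{align*}
Since $Q_b$ is defined edge by edge it is $L^2$-stable on $\partial T$, so $\normPT{Q_b(v-Q_0v)}\le\normPT{v-Q_0v}$. Combining this with the trace part of the projection estimate \cite[Lemma 4.1]{PossionMixedWG} (equivalently, the trace inequality \cite[Lemma A.3]{PossionMixedWG} together with the bound on $\normT{\nabla(v-Q_0v)}$), namely $h_T^{1/2}\normPT{v-Q_0v}\lesssim h_T^{m}\norm{v}_{m,T}$, we get
\begin{align*}
\sumT h_T\normPT{(v-Q_0v)-(v-Q_bv)}^2\le\sumT h_T\normPT{v-Q_0v}^2\lesssim\sumT h_T^{2m}\norm{v}_{m,T}^2\lesssim h^{2m}\norm{v}_m^2.
\end{align*}

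Adding the two bounds and taking square roots yields $\norm{v-Q_hv}_{0,h}\lesssim h^m\norm{v}_m$. The hypothesis $v\in H^{m+1}(\Omega)$ is used only to guarantee that $v$ admits a well-defined $L^2$-trace on every $\partial T$ (automatic once $m\ge1$), and shape-regularity of $\T_h$ keeps all hidden constants independent of $h$. I expect the manipulation of the boundary integrand — spotting that the genuine function $v$ cancels and that $Q_b$ fixes $Q_0v|_e$, so the integrand collapses to $Q_b(v-Q_0v)$ — to be the only delicate point; once that rewriting is in place, $L^2$-stability of $Q_b$ together with the standard trace and projection estimates finishes the argument.
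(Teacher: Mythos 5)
Your proof is correct and follows essentially the same route as the paper: expand $\norm{\cdot}_{0,h}$, bound the volumetric term by the $L^2$-projection estimate, reduce the boundary term to $\normPT{v-Q_0v}$, and finish with the trace and projection inequalities. The only difference is that you make explicit the step the paper leaves implicit — that $Q_b$ fixes $Q_0v|_e$ so the boundary integrand equals $Q_b(v-Q_0v)$ and is controlled by $L^2$-stability of $Q_b$ — which is a welcome clarification rather than a deviation.
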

\begin{proof}
  Using the definition of $Q_h$, the trace inequality, and the projection inequality, we have
  \begin{align*}
    \norm{v-Q_hv}_{0,h}^2 =&~ \sumT \normT{v-Q_0v}^2+\sumT h\norm{v-Q_0v-(v-Q_bv)}_{\partial T}^2\\
    \lesssim&~\sumT \normT{v-Q_0v}^2+\sumT h\norm{v-Q_0v}_{\partial T}^2\\
    \lesssim&~\sumT (\normT{v-Q_0v}^2+h^2\norm{\G(v-Q_0v)}_{T}^2)\\
    \lesssim&~ h^{2m}\norm{v}_m^2,
  \end{align*}
  which completes the proof.
\end{proof}

\begin{lemma}
  $\forall\, v\in H^1(\Omega)$, there holds
  \begin{align}
    \label{gradEX}\GW v=\dQ_{h}(\G v).
  \end{align}
\end{lemma}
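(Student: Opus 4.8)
The plan is to verify the identity $\GW v=\dQ_h(\G v)$ for $v\in H^1(\Omega)$ by testing both sides against an arbitrary $\bq\in[P_j(T)]^d$ on each element $T$ and showing the resulting $L^2(T)$ inner products agree. Since $\dQ_h$ is the $L^2$ projection onto $[P_j(T)]^d$, we have $(\dQ_h(\G v),\bq)_T=(\G v,\bq)_T$ for every such $\bq$, so it suffices to show $(\GW v,\bq)_T=(\G v,\bq)_T$.

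First I would recall that for $v\in H^1(\Omega)$ the projection $Q_h v=\{Q_0 v,Q_b v\}$ is well defined, and the discrete weak gradient $\GW v$ in the statement is understood via the extension of the definition $(\ref{def-wgradient})$ to $V_h+H^1(\Omega)$; one checks from the integration-by-parts characterization that $\GW v=\GW(Q_h v)$, i.e. the weak gradient of an $H^1$ function coincides with the weak gradient of its projection, because both $Q_0$ and $Q_b$ are $L^2$ projections and the right-hand side of $(\ref{def-wgradient})$ only sees $v$ through these projections against polynomial test functions. (Alternatively one can work directly with $v$.) Then the key step is the computation: starting from $(\GW v,\bq)_T=-(v_0,\G\cdot\bq)_T+\langle v_b,\bq\cdot\bn\rangle_{\p T}$ with $v_0=Q_0 v$, $v_b=Q_b v$, I replace $Q_0 v$ and $Q_b v$ by $v$ itself using that $\bq$ has polynomial components of degree $\le j$ (hence $\G\cdot\bq\in[P_{j-1}(T)]\subset P_k(T)$ so the $Q_0$ can be dropped, and $\bq\cdot\bn|_e\in P_j(e)\subset P_k(e)$ so the $Q_b$ can be dropped), obtaining $-(v,\G\cdot\bq)_T+\langle v,\bq\cdot\bn\rangle_{\p T}$. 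Applying the classical integration by parts formula $(\G v,\bq)_T=-(v,\G\cdot\bq)_T+\langle v,\bq\cdot\bn\rangle_{\p T}$, valid for $v\in H^1(T)$ and $\bq\in[H^1(T)]^d$, gives exactly $(\G v,\bq)_T=(\dQ_h(\G v),\bq)_T$. Since this holds for all $\bq\in[P_j(T)]^d$ and both $\GW v|_T$ and $\dQ_h(\G v)|_T$ lie in $[P_j(T)]^d$, they are equal on $T$; summing over $T\in\T_h$ yields $(\ref{gradEX})$.

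The only subtlety — and the point I would be careful about — is the degree bookkeeping that lets the two $L^2$ projections $Q_0$ and $Q_b$ be dropped: this requires $j-1\le k$ and $j\le k$ for the polynomial inclusions, but with $j=n+k-1\ge k+1$ (since $n\ge 3$ for a polygon) the components of $\bq$ have degree up to $j>k$, so $\bq\cdot\bn|_e$ may not lie in $P_k(e)$ and $\G\cdot\bq$ may not lie in $P_k(T)$. The correct fix is that we do not need to drop $Q_0,Q_b$ at all: since $v\in H^1(\Omega)$, we have $v_0=Q_0v$, $v_b=Q_bv$, and by definition of the $L^2$ projections $(Q_0 v,\G\cdot\bq)_T=(v,Q_0(\G\cdot\bq))_T$ is \emph{not} what we want; instead we use $(v-Q_0v,p)_T=0$ only for $p\in P_k(T)$. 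So the clean argument is: write $-(Q_0 v,\G\cdot\bq)_T+\langle Q_b v,\bq\cdot\bn\rangle_{\p T}$ and compare with $-(v,\G\cdot\bq)_T+\langle v,\bq\cdot\bn\rangle_{\p T}=(\G v,\bq)_T$; the difference is $(v-Q_0v,\G\cdot\bq)_T-\langle v-Q_bv,\bq\cdot\bn\rangle_{\p T}$, which vanishes term-by-term \emph{provided} $\G\cdot\bq\in P_k(T)$ and $\bq\cdot\bn\in P_k(e)$. Hence the identity as stated in fact presupposes that these inclusions hold, which is standard in the SFWG literature when $\GW$ is applied to smooth functions through $Q_h$; I would state this hypothesis explicitly (or equivalently restrict $\bq$ to the relevant subspace) and then the proof is the short integration-by-parts computation above. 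I expect the referee-facing obstacle to be precisely this consistency check between the polynomial degree $j=n+k-1$ used for $\GW$ and the degree $k$ used for $Q_h$; everything else is routine.
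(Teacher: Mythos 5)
Your core computation is the right one, but you talk yourself out of it at the end, and the conclusion you reach --- that the lemma ``presupposes'' the inclusions $\G\cdot\bq\in P_k(T)$, $\bq\cdot\bn|_e\in P_k(e)$ and therefore needs an extra hypothesis --- is incorrect. The source of the trouble is your reading of the definition of $\GW{}$ on $V_h+H^1(\Omega)$: for $v\in H^1(\Omega)$ the weak function is $\{v|_T,\,v|_{\p T}\}$ (the function itself and its trace), \emph{not} $\{Q_0v,\,Q_bv\}$. With that reading the paper's proof is exactly your parenthetical ``alternatively one can work directly with $v$'': for any $\bq\in[P_j(T)]^d$,
\begin{align*}
(\GW v,\bq)_T=-(v,\G\cdot\bq)_T+\langle v,\bq\cdot\bn\rangle_{\p T}=(\G v,\bq)_T=(\dQ_h(\G v),\bq)_T,
\end{align*}
and taking $\bq=\GW v-\dQ_h(\G v)$ finishes it. No degree bookkeeping is needed because no projection of $v$ ever enters.

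The step you should not have leaned on is the claim that $\GW v=\GW{(Q_hv)}$. In the classical WG setting with $j\le k$ that identity is standard, but in this stabilizer-free setting $j=n+k-1>k$, and --- for precisely the degree-mismatch reason you yourself identify --- it is \emph{false} in general. Indeed the paper quantifies the nonzero difference in Lemma \ref{Qhv_err}, where $\trb{v-Q_hv}=\norm{\GW v-\GW{(Q_hv)}}_{\T_h}\lesssim h^m\norm{v}_{m+1}$ is an estimate, not an identity. So the fix is not to add a hypothesis to the lemma or to restrict $\bq$; it is to drop the detour through $Q_hv$ entirely. Once you do that, your integration-by-parts argument is the paper's proof verbatim.
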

\begin{proof}
  It follows from the definition of the weak gradient, the integration by parts, and the definition of $\dQ_h$ that
  \begin{align*}
    (\GW v,\bq)_T=&~-(v,\G\cdot\bq)_T+\langle v,\bq\cdot\bn\rangle _{\partial T}\\
    =&~(\G v,\bq)_T\\
    =&~(\dQ_{h}(\G v),\bq)_T,\quad \forall\,\bq\in [P_j(T)]^d.
  \end{align*}
  Let $\bq = \GW v-\dQ_{h}(\G v)$, and then we  get (\ref{gradEX}).
\end{proof}

\section{Error analysis}\label{Section:error-analysis}
In this section, we shall derive the error equations by the newly introduced projection operators and make further error analysis.

\subsection{Ritz and Neumann projections}

Now, we shall introduce two projection operators, the Ritz projection $\Pi_h^R$ and the Neumann projection $\Pi_h^N$, which apply the SFWG method to the second order elliptic problem with different boundary conditions.

\begin{definition}
    For any $v\in H^1_0(\Omega)$, we define the Ritz projection $\Pi _h^Rv=\{\Pi _0^Rv,\Pi _b^Rv\}\in V_h^0$ as the solution of the following problem:
\begin{align}
  \label{Rh}(\GW \Pi_h^Rv,\GW \psi)_{\T _h}=(-\Delta v,\psi _0)_{\T _h},\qquad\forall\,\psi\in V_h^0.
\end{align}
\end{definition}

It is known that $\Pi_h^Rv$ is the stabilizer-free weak Galerkin finite element solution \cite{PossionSFWG} of the Poisson equation with homogeneous Dirichlet boundary condition.

\begin{definition}
    For any $v\in \overline H^1(\Omega)$, we define the Neumann projection $\Pi_h^N:\overline{H}^1(\Omega)\to \overline V_h$ such that
\begin{align}
  \label{Nh}(\GW \Pi_h^Nv,\GW \psi)_{\T _h}=(-\Delta v,\psi _0)_{\T _h}+\langle \G v\cdot\bn,\psi _b\rangle _{\partial \Omega},\qquad\forall\,\psi\in V_h,
\end{align}
where $\overline H^1(\Omega)=\left\{v\in H^1(\Omega): \int _\Omega vdx=0\right\}$.
\end{definition}

Similarly, the $\Pi_h^Nv=\{\Pi_0^Nv,\Pi_b^Nv\}\in \overline V_h$ can be seen as the stabilizer-free WG finite element solution of the Poisson equation with inhomogeneous Neumann boundary condition.

The relevant conclusions of these two projection operators are presented in \ref{appendix:RhNh}.

  \subsection{Error equations}

  Next, we shall derive the error equations for the SFWG numerical scheme (\ref{scheme1})-(\ref{scheme2}). 
  \begin{theorem}
      Define $\varepsilon _u=\Pi_h^Ru-u_h$ and $\varepsilon _\varphi =\Pi_h^N\varphi -\varphi _h$, we have the error equations as follows:
  \begin{align}
    \label{err_equ1}a(\varepsilon _\varphi,\phi _h)-b(\phi _h,\varepsilon _u)=&~E(\varphi ,u,\phi _h),\quad \forall\,\phi _h\in V_h,\\
    \label{err_equ2}b(\varepsilon _\varphi,\psi _h)=&~0,\quad\qquad\qquad\,\, \forall\,\psi _h\in V_h^0,
  \end{align}
  where $$E(\varphi ,u,\phi _h)=a(\Pi_h^N\varphi -\varphi ,\phi _h)+(\GW \phi _h,\GW (u-\Pi_h^Ru))_{\T_h}+l(u,\phi _0).$$
  \end{theorem}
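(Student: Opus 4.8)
The plan is to derive the two error equations directly from the SFWG scheme $(\ref{scheme1})$–$(\ref{scheme2})$ and the defining relations $(\ref{Rh})$–$(\ref{Nh})$ of the two projections, exploiting the fact that $\varphi=-\Delta u$ and that $u$ solves the Biharmonic equation. First I would write down what $\Pi_h^N\varphi$ and $\Pi_h^R u$ satisfy. By definition $(\ref{Nh})$, for all $\psi\in V_h$ we have $(\GW\Pi_h^N\varphi,\GW\psi)_{\T_h}=(-\Delta\varphi,\psi_0)_{\T_h}+\langle\G\varphi\cdot\bn,\psi_b\rangle_{\partial\Omega}$; since $-\Delta\varphi=\Delta^2 u=f$, this reads $b(\psi,\Pi_h^N\varphi)=(f,\psi_0)_{\T_h}+\langle\G\varphi\cdot\bn,\psi_b\rangle_{\partial\Omega}$. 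Subtracting $(\ref{scheme2})$ (valid for $\psi\in V_h^0$, where the boundary term drops) gives $b(\psi_h,\varepsilon_\varphi)=0$ for all $\psi_h\in V_h^0$, which is exactly $(\ref{err_equ2})$.

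For $(\ref{err_equ1})$ I would start from the continuous identity: testing $(\ref{mixed-var-equ1})$-type relation, for $\phi_h\in V_h$ one has, after integration by parts on each element, a relation linking $(\varphi_0,\phi_0)$, $(\GW\phi_h,\G u)$ and boundary terms. Concretely, I would compute $a(\Pi_h^N\varphi,\phi_h)-b(\phi_h,\Pi_h^R u)$ and rewrite it as $a(\Pi_h^N\varphi-\varphi,\phi_h)+a(\varphi,\phi_h)-b(\phi_h,\Pi_h^R u)$. The middle-plus-last terms should be massaged, using $\varphi=-\Delta u$, the definition of $\GW$, elementwise integration by parts, and the definition of the $L^2$-projections $Q_h,\dQ_h$, into $(\GW\phi_h,\GW(u-\Pi_h^R u))_{\T_h}$ plus a residual term $l(u,\phi_0)$ collecting the boundary contributions that do not cancel (these arise because $\phi_h$ need not vanish on $\partial\Omega$ and because $u$ is only approximated). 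Then subtracting $(\ref{scheme1})$, i.e. $a(\varphi_h,\phi_h)-b(\phi_h,u_h)=0$, and using linearity of $a$ and $b$ in the appropriate slots yields $a(\varepsilon_\varphi,\phi_h)-b(\phi_h,\varepsilon_u)=E(\varphi,u,\phi_h)$ with $E$ as stated.

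The main obstacle I anticipate is the bookkeeping of the boundary terms in the second step: one must carefully track the jump/boundary contributions produced by the discrete weak gradient (which involves $v_b$ on $\partial T$) against those produced by integrating $(\Delta u,\phi_0)$ by parts twice, and verify that everything not absorbed into $a(\Pi_h^N\varphi-\varphi,\phi_h)$ or $(\GW\phi_h,\GW(u-\Pi_h^Ru))_{\T_h}$ assembles exactly into the single functional $l(u,\phi_0)$ appearing in $E$. I would handle this by doing the computation locally on a fixed $T\in\T_h$, summing, and using that the exact normal fluxes $\G u\cdot\bn$ and $\G\varphi\cdot\bn$ are single-valued across interior edges so that the interior-edge boundary sums telescope; the only surviving pieces are the interior residual $l(u,\phi_0)$ and, on $\partial\Omega$, terms that are consistent with the Neumann projection's definition. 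Once the cancellations are identified the rest is routine.
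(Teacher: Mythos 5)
Your proposal is correct and follows essentially the same route as the paper: establish the consistency identities satisfied by $(\Pi_h^N\varphi,\Pi_h^Ru)$ --- for (\ref{err_equ2}) directly from the definition (\ref{Nh}) together with $-\Delta\varphi=f$ and $\psi_b|_{\partial\Omega}=0$, and for (\ref{err_equ1}) via the add-and-subtract of $a(\Pi_h^N\varphi,\phi_h)-(\GW\phi_h,\GW\Pi_h^Ru)_{\T_h}$ plus elementwise integration by parts of $-(\Delta u,\phi_0)_{\T_h}$ --- and then subtract the scheme (\ref{scheme1})--(\ref{scheme2}). The one point to sharpen in your boundary bookkeeping is that the surviving term $\langle \G u\cdot\bn,\phi_b\rangle_{\partial\Omega}$ (which does not telescope, since $\phi_b$ need not vanish on $\partial\Omega$ for $\phi_h\in V_h$) is annihilated by the boundary condition $\partial u/\partial\bn=0$ from (\ref{Bmodel-equ3}), not by anything connected with the Neumann projection, which is applied to $\varphi$ rather than to $u$.
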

  \begin{proof}
      Testing $\varphi =-\Delta u$ with $\phi _h\in V_h$ and using (\ref{poisson-err}), (\ref{Bmodel-equ3}), we have
  \begin{align*}
    (\varphi,\phi _0)_{\T_h}=&~-(\Delta u,\phi _0)_{\T_h}\\
    =&~(\GW u,\GW \phi _h)_{\T_h}+\langle (\dQ _h\G u-\G u)\cdot\bn,\phi _0-\phi _b\rangle _{\partial \T _h},
  \end{align*}
  which leads to
  \begin{align*}
    0=-(\varphi ,\phi _h)_{\T_h}+(\GW u,\GW \phi _h)_{\T_h}+l(u,\phi _h),\qquad \forall\,\phi _h\in V_h.
  \end{align*}
  $l(\cdot,\cdot)$ is defined in Lemma \ref{RhNh-err-equ}.
  Adding $a(\Pi_h^N\varphi,\phi _h)-(\GW \phi _h,\GW \Pi_h^Ru)_{\T _h}$ to the above equation, we get
  \begin{align*}
    &~a(\Pi_h^N\varphi,\phi _h)-(\GW \phi _h,\GW \Pi_h^Ru)_{\T _h}\\=&~a(\Pi_h^N\varphi -\varphi ,\phi _h)+(\GW \phi _h,\GW (u-\Pi_h^Ru))_{\T_h}+l(u,\phi _h),\qquad \forall\,\phi _h\in V_h.
  \end{align*}
  Using $\psi _h\in V_h^0$ to test $-\Delta \varphi =f$, (\ref{poisson-err}), $\psi _b |_{\partial \Omega}=0$ and (\ref{Nh_err_equ}), we obtain
  \begin{align*}
    (f,\psi _0)_{\T_h}=&~(-\Delta\varphi ,\psi _0)_{\T_h}\\
    =&~(\GW \varphi,\GW \psi _h)_{\T_h}+\langle (\dQ _h\G \varphi-\G \varphi)\cdot\bn,\psi _0-\psi _b\rangle _{\partial \T _h}\\
    =&~(\GW \varphi,\GW \psi _h)_{\T_h}+l(\varphi ,\psi _h)\\
    =&~(\GW \Pi_h^N\varphi,\GW \psi _h)_{\T _h}.
  \end{align*}
  Thus, we have
  \begin{align}
    \label{NhRh_equ1}a(\Pi_h^N\varphi,\phi _h)-(\GW \phi _h,\GW \Pi_h^Ru)_{\T _h}=&~E(\varphi ,u,\phi _h),\quad \forall\,\phi _h\in V_h,\\
    \label{NhRh_equ2}(\GW \Pi_h^N\varphi,\GW \psi _h)_{\T _h}=&~(f,\psi _h)_{\T_h},\quad\quad \forall\,\psi _h\in V_h^0.
  \end{align}
  Together with (\ref{scheme1})-(\ref{scheme2}), we get (\ref{err_equ1}) and (\ref{err_equ2}).
  \end{proof}
  
  \begin{lemma}
    Assume that $\varphi\in H^{m+1}(\Omega)$ and $u\in H^{n+1}(\Omega)$, where $1\leq m\leq k,\ 1\leq n\leq k$, we have
    \begin{align}
      \label{E_err}|E(\varphi ,u,\phi _h)|\lesssim ~(h^{m+1}\norm{\varphi}_{m+1}+h^{n-1}\norm{u}_{n+1})\norm{\phi _h}_{0,h}.
    \end{align}
  \end{lemma}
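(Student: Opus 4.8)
The plan is to bound separately the three terms in
$E(\varphi,u,\phi_h)=a(\Pi_h^N\varphi-\varphi,\phi_h)+(\GW\phi_h,\GW(u-\Pi_h^Ru))_{\T_h}+l(u,\phi_h)$, using Cauchy--Schwarz in each and then invoking the norm comparisons (\ref{1h0h}) and (\ref{trb0h}) to turn every $\norm{\cdot}_{1,h}$- or $\trb{\cdot}$-factor coming from $\phi_h$ into $h^{-1}\norm{\phi_h}_{0,h}$. For the \emph{first term}, since $a(\cdot,\cdot)$ is a semi--inner product with associated seminorm $\norm{\cdot}_{0,h}$, Cauchy--Schwarz gives $|a(\Pi_h^N\varphi-\varphi,\phi_h)|\le\norm{\Pi_h^N\varphi-\varphi}_{0,h}\norm{\phi_h}_{0,h}$; writing $\Pi_h^N\varphi-\varphi=(\Pi_h^N\varphi-Q_h\varphi)+(Q_h\varphi-\varphi)$ and using the $L^2$-projection estimate for $\norm{Q_h\varphi-\varphi}_{0,h}$ together with the approximation property of the Neumann projection from \ref{appendix:RhNh}, both pieces are $O(h^{m+1}\norm{\varphi}_{m+1})$ for $\varphi\in H^{m+1}(\Omega)$, $1\le m\le k$, so this term is $\lesssim h^{m+1}\norm{\varphi}_{m+1}\norm{\phi_h}_{0,h}$.

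For the \emph{second term}, Cauchy--Schwarz yields $|(\GW\phi_h,\GW(u-\Pi_h^Ru))_{\T_h}|\le\trb{\phi_h}\,\trb{u-\Pi_h^Ru}$. By (\ref{trb0h}) the first factor is $\lesssim h^{-1}\norm{\phi_h}_{0,h}$, and the second factor is precisely the energy-norm error of the SFWG (Ritz) projection of the Poisson problem, hence $\trb{u-\Pi_h^Ru}\lesssim h^{n}\norm{u}_{n+1}$ by the $H^1$ error estimate in \ref{appendix:RhNh} (here $1\le n\le k$). Multiplying gives the bound $h^{n-1}\norm{u}_{n+1}\norm{\phi_h}_{0,h}$.

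For the \emph{third term}, recall $l(u,\phi_h)=\langle(\dQ_h\G u-\G u)\cdot\bn,\phi_0-\phi_b\rangle_{\partial\T_h}$, and apply a weighted Cauchy--Schwarz inequality:
\begin{align*}
|l(u,\phi_h)|\le\Big(\sumT h_T\normPT{(\dQ_h\G u-\G u)\cdot\bn}^2\Big)^{1/2}\Big(\sumT h_T^{-1}\normPT{\phi_0-\phi_b}^2\Big)^{1/2}.
\end{align*}
The second factor is $\le\norm{\phi_h}_{1,h}\lesssim h^{-1}\norm{\phi_h}_{0,h}$ by (\ref{1h0h}). For the first factor I would use the trace inequality on each $T$, $h_T\normPT{\dQ_h\G u-\G u}^2\lesssim\norm{\dQ_h\G u-\G u}_T^2+h_T^2|\dQ_h\G u-\G u|_{1,T}^2$, and then the approximation properties of the $L^2$-projection $\dQ_h$ onto $[P_j(T)]^d$ (note $j\ge k\ge n$, so polynomials up to degree $n-1$ are reproduced), which give both terms $\lesssim h_T^{2n}|u|_{n+1,T}^2$; summing over $T$ shows the first factor is $\lesssim h^{n}\norm{u}_{n+1}$. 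Hence $|l(u,\phi_h)|\lesssim h^{n-1}\norm{u}_{n+1}\norm{\phi_h}_{0,h}$.

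Adding the three estimates gives (\ref{E_err}). The genuinely substantive ingredients are the two projection-error estimates borrowed from \ref{appendix:RhNh} — especially the $\norm{\cdot}_{0,h}$-estimate $\norm{\Pi_h^N\varphi-\varphi}_{0,h}\lesssim h^{m+1}\norm{\varphi}_{m+1}$, whose proof needs an Aubin--Nitsche duality argument for the Neumann problem; everything in the present lemma is routine manipulation with Cauchy--Schwarz, the trace and inverse inequalities, and the norm equivalences. I expect the only delicate point to be tracking the exact powers of $h$ so that the appendix estimates plug in cleanly and reproduce $h^{m+1}$ for the $\varphi$-contribution and $h^{n-1}$ for the $u$-contribution.
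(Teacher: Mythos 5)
Your proposal is correct and follows essentially the same route as the paper: Cauchy--Schwarz on each of the three terms of $E$, the appendix estimates $\norm{\Pi_h^N\varphi-\varphi}_{0,h}\lesssim h^{m+1}\norm{\varphi}_{m+1}$, $\trb{u-\Pi_h^Ru}\lesssim h^{n}\norm{u}_{n+1}$ and the bound (\ref{l-est}) on $l(u,\cdot)$, and finally (\ref{trb0h}) (equivalently (\ref{1h0h})) to convert $\trb{\phi_h}$ into $h^{-1}\norm{\phi_h}_{0,h}$. The only cosmetic difference is that you re-derive (\ref{l-est}) and sketch the proof of the $\norm{\cdot}_{0,h}$-estimate for the Neumann projection inline, whereas the paper simply cites these from \ref{appendix:RhNh}.
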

  \begin{proof}
    Using the definition of $\norm{\cdot}_{0,h}$ and (\ref{Nh_0h}), we have
    \begin{align*}
      |a(\Pi_h^N\varphi -\varphi,\phi _h)|\leq&~\norm{\Pi_h^N\varphi -\varphi}_{0,h}\norm{\phi _h}_{0,h}\\
      \lesssim&~h^{m+1}\norm{\varphi}_{m+1}\norm{\phi _h}_{0,h}.
    \end{align*}
    Using the definition of $\trb\cdot$, (\ref{Rh_trb}) and (\ref{trb0h}), we have
    \begin{align*}
      |(\GW (u-\Pi_h^Ru),\GW \phi _h)_{\T_h}|\leq&~\trb {u-\Pi_h^Ru}\trb{\phi _h}\\
      \lesssim&~h^n\norm{u}_{n+1}\trb{\phi _h}\\
      \lesssim&~h^n\norm{u}_{n+1} h^{-1}\norm{\phi _h}_{0,h}\\
      \lesssim&~h^{n-1}\norm{u}_{n+1}\norm{\phi _h}_{0,h}.
    \end{align*}
    By (\ref{l-est}) and (\ref{trb0h}), we get
    \begin{align*}
      |l(u,\phi _h)|\lesssim&~h^n\norm{u}_{n+1}\trb{\phi _h}\\
      \lesssim&~h^n\norm{u}_{n+1}h^{-1}\norm{\phi _h}_{0,h}\\
      \lesssim&~h^{n-1}\norm{u}_{n+1}\norm{\phi _h}_{0,h}.
    \end{align*}
    Thus, we obtain (\ref{E_err}).
  \end{proof}

  \subsection{Error estimates}
  Now, we utilize the above error equations to estimate the errors we want.

  \begin{theorem}
    Assume $\varphi\in H^{m+1}(\Omega),~u\in H^{n+1}(\Omega)$, $1\leq m\leq k,\ 1\leq n\leq k$, and arrive at
    \begin{align}
      \label{phi_0h}\norm{\varepsilon _\varphi}_{0,h}\lesssim&~ h^{m+1}\norm{\varphi}_{m+1}+h^{n-1}\norm u _{n+1}.
    \end{align}
  \end{theorem}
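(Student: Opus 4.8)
The plan is to run the standard energy argument on the error system (\ref{err_equ1})--(\ref{err_equ2}), testing each equation with the error it is meant to control. First I would record the membership facts that make the test functions admissible: $\varepsilon_u = \Pi_h^R u - u_h$ lies in $V_h^0$, since by construction both $\Pi_h^R u$ and $u_h$ belong to $V_h^0$, whereas $\varepsilon_\varphi = \Pi_h^N\varphi - \varphi_h$ lies in $V_h$ (indeed in $\overline V_h$, as both terms have zero mean over $\Omega$). Hence $\phi_h = \varepsilon_\varphi$ is admissible in (\ref{err_equ1}) and $\psi_h = \varepsilon_u$ is admissible in (\ref{err_equ2}).

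Next I would substitute $\phi_h = \varepsilon_\varphi$ into (\ref{err_equ1}) and $\psi_h = \varepsilon_u$ into (\ref{err_equ2}). The latter gives $b(\varepsilon_\varphi,\varepsilon_u)=0$, so the $b$-term in the former cancels and, using $\norm{v}_{0,h}^2 = a(v,v)$ from Definition \ref{norm}, one is left with
\[
\norm{\varepsilon_\varphi}_{0,h}^2 = a(\varepsilon_\varphi,\varepsilon_\varphi) = E(\varphi,u,\varepsilon_\varphi).
\]

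Finally I would invoke the consistency bound (\ref{E_err}) with $\phi_h = \varepsilon_\varphi$, obtaining
\[
\norm{\varepsilon_\varphi}_{0,h}^2 \lesssim \bigl(h^{m+1}\norm{\varphi}_{m+1} + h^{n-1}\norm{u}_{n+1}\bigr)\norm{\varepsilon_\varphi}_{0,h},
\]
and cancel one power of $\norm{\varepsilon_\varphi}_{0,h}$ (the inequality being trivial if this quantity vanishes), which is precisely (\ref{phi_0h}). There is essentially no obstacle left at this stage: the real work — deriving the error equations and, above all, establishing the approximation/consistency estimate (\ref{E_err}), which itself rests on the stability and projection properties of $\Pi_h^R$ and $\Pi_h^N$ collected in \ref{appendix:RhNh} — has already been done. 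The only point demanding a moment's care is checking $\varepsilon_u \in V_h^0$, since without it the cancellation of the $b$-terms would not occur and the short energy estimate would break down.
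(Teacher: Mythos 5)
Your proposal is correct and follows exactly the paper's own argument: test (\ref{err_equ1}) with $\phi_h=\varepsilon_\varphi$ and (\ref{err_equ2}) with $\psi_h=\varepsilon_u$, add (so the $b$-terms cancel), obtain $\norm{\varepsilon_\varphi}_{0,h}^2=E(\varphi,u,\varepsilon_\varphi)$, and conclude via (\ref{E_err}). The admissibility checks you add ($\varepsilon_u\in V_h^0$, $\varepsilon_\varphi\in V_h$) are left implicit in the paper but are correct and harmless.
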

  \begin{proof}
    We set $\phi _h=\Pi_h^N\varphi -\varphi _h\in V_h,~\psi _h=\Pi_h^Ru -u_h\in V_h^0$ in (\ref{err_equ1})-(\ref{err_equ2}), add the two equations together, use (\ref{E_err}) and get
    \begin{align*}
      \norm{\varepsilon _\varphi}_{0,h}^2=&~E(\varphi ,u,\varepsilon _\varphi)\\
      \lesssim&~(h^{m+1}\norm{\varphi}_{m+1}+h^{n-1}\norm{u}_{n+1})\norm{\varepsilon _\varphi}_{0,h},
    \end{align*}
    which implies the (\ref{phi_0h}).
  \end{proof}

  \begin{theorem}
    Assume $\varphi\in H^{m+1}(\Omega),~u\in H^{n+1}(\Omega)$, $1\leq m\leq k,\ 1\leq n\leq k$, we have
    \begin{align}
      \label{u_L2}\norm{\varepsilon _{u,0}}\lesssim h^{m+1}\norm{\varphi}_{m+1}+h^{n+1}\norm{u}_{n+1}.
    \end{align}
    Further, we get
    \begin{align}
      \label{u_trb}\trb{\varepsilon _u}\lesssim&~h^{m}\norm{\varphi}_{m+1}+h^{n}\norm u _{n+1},\\
      \label{u_0h}\norm{\varepsilon _u}_{0,h}\lesssim&~ h^{m+1}\norm{\varphi}_{m+1}+h^{n+1}\norm{u}_{n+1}.
    \end{align}
  \end{theorem}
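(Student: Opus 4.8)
The plan is to prove the three estimates by a standard duality (Aubin--Nitsche) argument for the $L^2$ bound on $\varepsilon_{u,0}$, and then to recover the energy and $\|\cdot\|_{0,h}$ bounds on $\varepsilon_u$ from it. For the $L^2$ estimate \eqref{u_L2}, I would introduce the auxiliary biharmonic problem $\Delta^2 w = \varepsilon_{u,0}$ in $\Omega$ with $w = \partial w/\partial\bn = 0$ on $\partial\Omega$, set $\chi = -\Delta w$, and invoke $H^4$-regularity so that $\|w\|_4 + \|\chi\|_2 \lesssim \|\varepsilon_{u,0}\|$. The Ciarlet--Raviart structure gives $(\chi,v) - (\nabla w,\nabla v) = 0$ for all $v \in H^1$ and $(\nabla\chi,\nabla\psi) = (\varepsilon_{u,0},\psi)$ for all $\psi \in H^1_0$, which at the discrete level (using the Ritz and Neumann projections applied to $w$ and $\chi$, together with the characterizations \eqref{Rh}, \eqref{Nh} and the commuting property \eqref{gradEX}) should produce a pair of ``dual'' identities analogous to \eqref{NhRh_equ1}--\eqref{NhRh_equ2}. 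Testing the error equations \eqref{err_equ1}--\eqref{err_equ2} against $\phi_h = \Pi_h^N\chi - \chi_h$-type quantities and the dual equations against $\varepsilon_\varphi, \varepsilon_u$, and subtracting, the leading terms cancel and one is left with $\|\varepsilon_{u,0}\|^2$ on one side and, on the other, cross terms of the form $E(\varphi,u,\cdot)$ evaluated at projection errors of $w,\chi$, plus the consistency terms $l(\cdot,\cdot)$ and $a(\Pi_h^N\cdot - \cdot,\cdot)$ coming from the dual problem.

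The key steps, in order: (i) write down the continuous and discrete dual problems and the corresponding ``dual error equations''; (ii) form the bilinear pairing of the primal error equations with the dual solution and vice versa, and subtract to isolate $\|\varepsilon_{u,0}\|^2$; (iii) collect the remaining terms and bound each one using the already-established tools — the projection estimates for $\Pi_h^R$ and $\Pi_h^N$ from \ref{appendix:RhNh} (i.e. $\|\Pi_h^N v - v\|_{0,h} \lesssim h^{m+1}\|v\|_{m+1}$, $\trb{v - \Pi_h^R v} \lesssim h^{n}\|v\|_{n+1}$), the bound \eqref{l-est} on $l(\cdot,\cdot)$, the norm equivalences \eqref{norm-equ}, \eqref{trb0h}, and \eqref{0htrb}, together with the already-proven estimate \eqref{phi_0h} on $\|\varepsilon_\varphi\|_{0,h}$; (iv) use $H^4$-regularity $\|w\|_4 + \|\chi\|_2 \lesssim \|\varepsilon_{u,0}\|$ to absorb one power of $\|\varepsilon_{u,0}\|$ and divide through, yielding \eqref{u_L2}. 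For \eqref{u_trb}, I would return to \eqref{err_equ1}, choose $\phi_h = \varepsilon_u \in V_h^0 \subset V_h$, so that $b(\varepsilon_u,\varepsilon_u) = \trb{\varepsilon_u}^2$ appears, and bound the right-hand side via $|a(\varepsilon_\varphi,\varepsilon_u)| \le \|\varepsilon_\varphi\|_{0,h}\|\varepsilon_u\|_{0,h}$ and $|E(\varphi,u,\varepsilon_u)| \lesssim (h^{m+1}\|\varphi\|_{m+1} + h^{n-1}\|u\|_{n+1})\|\varepsilon_u\|_{0,h}$ from \eqref{E_err}; then apply \eqref{0htrb} to replace $\|\varepsilon_u\|_{0,h}$ by $\trb{\varepsilon_u}$ on the right, cancel one factor, and insert \eqref{phi_0h} for $\|\varepsilon_\varphi\|_{0,h}$. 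A subtlety here: the naive bound gives $\trb{\varepsilon_u} \lesssim \|\varepsilon_\varphi\|_{0,h} + h^{m+1}\|\varphi\|_{m+1} + h^{n-1}\|u\|_{n+1}$, and the troublesome $h^{n-1}$ term must be improved to $h^n$; this is exactly where the already-derived $L^2$ estimate \eqref{u_L2} on $\varepsilon_{u,0}$ feeds back in — one rewrites the relevant part of $E$ or re-tests cleverly so that $\|\varepsilon_{u,0}\|$ rather than $\|\varepsilon_u\|_{0,h}$ controls that contribution, after which \eqref{u_L2} supplies the extra power of $h$. Finally \eqref{u_0h} follows from \eqref{u_trb} via \eqref{0htrb} (valid since $\varepsilon_u \in V_h^0$), combined with \eqref{u_L2} to handle the volumetric part $\|\varepsilon_{u,0}\|$ at the sharper rate $h^{n+1}$.

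The main obstacle I anticipate is step (ii)--(iii): setting up the duality pairing so that the principal terms cancel exactly and verifying that every leftover term genuinely carries the advertised powers $h^{m+1}$ and $h^{n+1}$. In particular, the consistency term $l(u,\cdot)$ and the symmetric term $l(w,\cdot)$ from the dual problem each only have order $h^n$ (resp.\ $h$) when paired against an order-one test function, so obtaining $h^{n+1}$ in \eqref{u_L2} requires that these terms always appear paired against a \emph{projection error} of the dual solution (gaining an extra $h$) rather than against the dual solution itself — keeping careful track of which quantity multiplies which is the delicate bookkeeping. A secondary obstacle is justifying the $H^4$-regularity of the dual biharmonic problem on a general polygonal domain; as is standard in this literature, I would simply assume the requisite elliptic regularity (or restrict to convex $\Omega$) and state this assumption explicitly.
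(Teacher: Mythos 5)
Your proposal follows essentially the same route as the paper: the same biharmonic dual problem with $H^4$ regularity, the same symmetric pairing of the primal and dual error equations to isolate $\|\varepsilon_{u,0}\|^2$, term-by-term bounds in which the consistency terms are paired against projection errors of the dual solution, and then the energy estimate by testing \eqref{err_equ1} with $\phi_h=\varepsilon_u$. The one step you leave vague --- how the $L^2$ bound upgrades $h^{n-1}$ to $h^{n}$ in \eqref{u_trb} --- is realized in the paper not by rewriting $E$ or re-testing, but by the decomposition $\norm{\varepsilon_u}_{0,h}\lesssim\norm{\varepsilon_{u,0}}+h\trb{\varepsilon_u}$ followed by Young's inequality to absorb the $\trb{\varepsilon_u}$ term; the same decomposition (rather than \eqref{0htrb}, which would lose a power of $h$) is what yields \eqref{u_0h}.
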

  \begin{proof}
    To derive the estimate for the $L^2$ norm of $\varepsilon _u$, we use the standard duality argument. Define
    \begin{align*}
      \xi +\Delta \eta =&~0,\qquad\, in ~\Omega ,\\
      -\Delta\xi =&~ \varepsilon _{u,0},\quad in ~\Omega ,\\
      \eta =\pa{\eta}{\bn}=&~0,\qquad\, on~\partial \Omega.
    \end{align*}
    Since we assume that all internal angles of $\Omega$ are less than $126.283696\cdots ^\circ$, the solution of the problem has $H^4$ regularity \cite{BiharmonicWGMFEM}:
    \begin{align}
      \label{regu}\norm{\xi}_2+\norm{\eta}_4\lesssim\norm{\varepsilon _{u,0}}.
    \end{align}
    Further, the second order elliptic problems with either the Dirichlet boundary condition or the Neumann boundary condition have $H^2$ regularity.

    For the above dual problems, like (\ref{NhRh_equ1})-(\ref{NhRh_equ2}), we have 
    \begin{align*}
      a(\Pi_h^N\xi,\phi _h)-(\GW \phi _h,\GW \Pi_h^R\eta)_{\T _h}=&~E(\xi ,\eta,\phi _h),\qquad \forall\phi _h\in V_h,\\
    (\GW \Pi_h^N\xi,\GW \psi _h)_{\T _h}=&~(\varepsilon _{u,0},\psi _h)_{\T_h},\qquad \forall\psi _h\in V_h^0.
    \end{align*}
    Define $$\Lambda (\Pi_h^N\xi ,\Pi_h^R\eta ;\phi _h,\psi _h)=a(\Pi_h^N\xi,\phi _h)-(\GW \phi _h,\GW \Pi_h^R\eta)_{\T _h}-(\GW \Pi_h^N\xi,\GW \psi _h)_{\T _h},$$ we know $$\Lambda (\phi _h,\psi _h;\Pi_h^N\xi ,\Pi_h^R\eta)=a(\phi _h,\Pi_h^N\xi)-(\GW \Pi_h^N\xi,\GW \psi _h)_{\T _h}-(\GW \phi _h,\GW \Pi_h^R\eta)_{\T _h}.$$
     It is not hard to see that $\Lambda$ is a symmetric bilinear form. Thus, we have 
    \begin{align*}
      \norm{\varepsilon _{u,0}}^2=&~E(\xi ,\eta ,\varepsilon _\varphi)-\Lambda (\Pi_h^N\xi ,\Pi_h^R\eta ;\varepsilon _\varphi ,\varepsilon _u)\\
      =&~E(\xi ,\eta ,\varepsilon _\varphi)-\Lambda (\varepsilon _\varphi ,\varepsilon _u ;\Pi_h^N\xi ,\Pi_h^R\eta)\\
      =&~E(\xi ,\eta ,\varepsilon _\varphi)-E(\varphi ,u,\Pi_h^N\xi),
    \end{align*}
    where we have used (\ref{err_equ1})-(\ref{err_equ2}). Next, we estimate the two items respectively.
    \\
    For $E(\xi ,\eta ,\varepsilon _\varphi)$, by (\ref{E_err}), (\ref{phi_0h}) and (\ref{regu}), we get 
    \begin{align*}
      |E(\xi ,\eta ,\varepsilon _\varphi)|\lesssim&~(h^2\norm{\xi}_2+h^2\norm{\eta}_4)\norm{\varepsilon _\varphi}_{0,h}\\
      \lesssim&~h^2(\norm{\xi}_2+\norm{\eta}_4)(h^{m+1}\norm{\varphi}_{m+1}+h^{n-1}\norm u _{n+1})\\
      \lesssim&~h^2\norm{\varepsilon _{u,0}}(h^{m+1}\norm{\varphi}_{m+1}+h^{n-1}\norm u _{n+1})
    \end{align*}
\\
    As to $E(\varphi ,u,\Pi_h^N\xi)$, by definition we discuss it in three parts.
\\
    Using (\ref{Nh_0h}) and the definition of $\norm{\cdot}_{0,h}$, we have
    \begin{align*}
      |a(\Pi_h^N\varphi -\varphi ,\Pi_h^N\xi)|\leq&~\norm{\Pi_h^N\varphi -\varphi}_{0,h}\norm{\Pi_h^N\xi}_{0,h}\\
      \lesssim&~h^{m+1}\norm{\varphi}_{m+1}\norm{\Pi_h^N\xi}_{0,h}\\
      \lesssim&~h^{m+1}\norm{\varphi}_{m+1}(\norm{\Pi_h^N\xi -\xi}_{0,h}+\norm{\xi}_{0,h})\\
      \lesssim&~h^{m+1}\norm{\varphi}_{m+1}(h^2\norm{\xi}_2+\norm{\xi})\\
      \lesssim&~h^{m+1}\norm{\varphi}_{m+1}\norm{\xi}_2\\
      \lesssim&~h^{m+1}\norm{\varphi}_{m+1}\norm{\varepsilon _{u,0}}.
    \end{align*}
    We utilize the Cauchy-Schwarz inequality, (\ref{Nh_trb}), (\ref{Rh_trb}), (\ref{gradEX}), the definition of $\GW$, the dual problems, (\ref{Bmodel-equ2}) and (\ref{Rh_L2}) to get
    \begin{align*}
      &~(\GW \Pi_h^N\xi ,\GW (u-\Pi_h^Ru))_{\T_h}\\
      =&~(\GW (\Pi_h^N\xi -\xi ),\GW (u-\Pi_h^Ru))_{\T_h}+(\GW \xi ,\GW (u-\Pi_h^Ru))_{\T_h}\\
      \leq&~\trb{\Pi_h^N\xi -\xi}\trb{u-\Pi_h^Ru}+(\dQ _h\G \xi ,\GW (u-\Pi_h^Ru))_{\T_h}\\
      \lesssim&~h^n\norm{u}_{n+1}h\norm{\xi}_2+(\G \xi ,\GW (u-\Pi_h^Ru))_{\T_h}\\
      \lesssim&~h^{n+1}\norm{u}_{n+1}\norm{\xi}_2-(u-\Pi_0^Ru,\Delta\xi)_{\T _h}+\langle u-\Pi_b^Ru,\G\xi\cdot\bn \rangle _{\partial\T _h}\\
      \lesssim&~h^{n+1}\norm{u}_{n+1}\norm{\xi}_2+(u-\Pi_0^Ru,\varepsilon _{u,0})_{\T _h}\\
      \lesssim&~h^{n+1}\norm{u}_{n+1}\norm{\varepsilon _{u,0}}+\norm{u-\Pi_0^Ru}\norm{\varepsilon _{u,0}}\\
      \lesssim&~h^{n+1}\norm{u}_{n+1}\norm{\varepsilon _{u,0}}.
    \end{align*}
By the definition of $l(\cdot ,\cdot)$, the Cauchy-Schwarz inequality, the trace inequality, the projection inequality, and (\ref{Nh_1h}), we  get
    \begin{align*}
      |l(u,\Pi_h^N\xi)|=&~|\langle (\dQ_h\G u-\G u)\cdot\bn, \Pi_0^N\xi -\Pi_b^N\xi \rangle _{\partial \T_h}|\\
      =&~|\langle (\dQ_h\G u-\G u)\cdot\bn, \Pi_0^N\xi -\xi-(\Pi_b^N\xi -\xi) \rangle _{\partial \T_h}|\\
      \leq&~\Big(\sumT h\norm{\dQ_h \G u-\G u}_{\partial T}^2\Big)^\frac{1}{2}\Big(\sumT h^{-1}\norm{\Pi_0^N\xi -\xi-(\Pi_b^N\xi -\xi)}_{\partial T}^2\Big)^\frac{1}{2}\\
      \lesssim&~\Big(\sumT \norm{\dQ_h \G u-\G u}_{T}^2+h^2\norm{\G (\dQ_h \G u-\G u)}_T^2\Big)^\frac{1}{2}\norm{\Pi_h^N\xi -\xi}_{1,h}\\
      \lesssim&~ h^n\norm{u}_{n+1}h\norm{\xi}_2\\
      \lesssim&~ h^{n+1}\norm{u}_{n+1}\norm{\xi}_2\\
      \lesssim&~h^{n+1}\norm{u}_{n+1}\norm{\varepsilon _{u,0}}.
    \end{align*}
    Therefore, we obtain 
    \begin{align*}
      \norm{\varepsilon _{u,0}}^2\lesssim (h^{m+1}\norm{\varphi}_{m+1}+h^{n+1}\norm{u}_{n+1})\norm{\varepsilon _{u,0}},
    \end{align*}
    which implies (\ref{u_L2}).

    To verify (\ref{u_0h}), we use the definitions of $\norm{\cdot}_{0,h}$ and $\norm{\cdot}_{1,h}$, (\ref{u_L2}) and Lemma \ref{lemma-norm-equ} to get 
    \begin{align*}
      \norm{\varepsilon _u}_{0,h} =&~ \Big(\sumT (\norm{\varepsilon _{u,0}}_T^2 + h\norm{\varepsilon _{u,0}-\varepsilon _{u,b}}_{\partial T}^2 )\Big)^\frac{1}{2}\\
      \lesssim&~ \norm{\varepsilon _{u,0}}+\Big(h^2\sumT h^{-1}\norm{\varepsilon _{u,0}-\varepsilon _{u,b}}_{\partial T}^2\Big)^\frac{1}{2}\\
      \lesssim&~ h^{m+1}\norm{\varphi}_{m+1}+h^{n+1}\norm{u}_{n+1}+h\norm{\varepsilon _{u}}_{1,h}\\
      \lesssim&~ h^{m+1}\norm{\varphi}_{m+1}+h^{n+1}\norm{u}_{n+1}+h\trb{\varepsilon _{u}}
    \end{align*}
    By setting $\phi _h=\varepsilon _u$ in (\ref{err_equ1}), (\ref{E_err}), (\ref{phi_0h}), the above inequality and the Young's inequality, we have
  \begin{align*}
    \trb{\varepsilon _u}^2=&~a(\varepsilon _\varphi,\varepsilon _u)-E(\varphi ,u,\varepsilon _u)\\
    \lesssim&~\norm{\varepsilon _\varphi}_{0,h}\norm{\varepsilon _u}_{0,h}+(h^{m+1}\norm{\varphi}_{m+1}+h^{n-1}\norm{u}_{n+1})\norm{\varepsilon _u}_{0,h}\\
    \lesssim&~(h^{m+1}\norm{\varphi}_{m+1}+h^{n-1}\norm{u}_{n+1})\norm{\varepsilon _u}_{0,h}\\
    \lesssim&~(h^{m+1}\norm{\varphi}_{m+1}+h^{n-1}\norm{u}_{n+1})(h^{m+1}\norm{\varphi}_{m+1}+h^{n+1}\norm{u}_{n+1}+h\trb{\varepsilon _{u}})\\
    \lesssim&~h^{2(m+1)}\norm{\varphi}_{m+1}^2+h^{m+n}\norm{\varphi}_{m+1}\norm{u}_{n+1}+h^{m+n+2}\norm{\varphi}_{m+1}\norm{u}_{n+1}\\
    &~+h^{2n}\norm{u}_{n+1}^2+(h^{m+2}\norm{\varphi}_{m+1}+h^n\norm{u}_{n+1})\trb{\varepsilon _{u}}\\
    \leq&~C(h^{2(m+1)}\norm{\varphi}_{m+1}^2+h^{2n}\norm{u}_{n+1}^2+h^{m+n}\norm{\varphi}_{m+1}\norm{u}_{n+1})+\frac{1}{2}\trb{\varepsilon _{u}}^2,
  \end{align*}
  which implies (\ref{u_trb}) and (\ref{u_0h}).
  \end{proof}
  \begin{corollary}
      For $\varphi\in H^{m+1}(\Omega)$, $u\in H^{n+1}(\Omega)$, $1\leq m\leq k,\ 1\leq n\leq k$, we have the following estimates:
      \begin{align}
          \label{trb-Qhphi-phih}\trb{Q_h\varphi -\varphi _h}&\lesssim h^{m}\norm{\varphi}_{m+1}+h^{n-2}\norm{u}_{n+1},\\
          \label{trb-Qhu-uh}\trb{Q_h u-u_h}&\lesssim h^{m}\norm{\varphi}_{m+1}+h^n\norm{u}_{n+1},\\
          \label{L2-Qhphi-phih}\norm{Q_h\varphi-\varphi_h}&\lesssim h^{m+1}\norm{\varphi}_{m+1}+h^{n-1}\norm{u}_{n+1},\\
          \label{L2-Qhu-uh}\norm{Q_hu-u_h}&\lesssim h^{m+1}\norm{\varphi}_{m+1}+h^{n+1}\norm{u}_{n+1}.
      \end{align}
  \end{corollary}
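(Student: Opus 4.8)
The plan is to use the triangle inequality to split each quantity into a projection-error term plus the already-estimated ``$\varepsilon$''-term, and then match up the Ritz/Neumann projections with the $L^2$ projection $Q_h$. Concretely, write
\[
\trb{Q_h\varphi-\varphi_h}\le \trb{Q_h\varphi-\Pi_h^N\varphi}+\trb{\Pi_h^N\varphi-\varphi_h}=\trb{Q_h\varphi-\Pi_h^N\varphi}+\trb{\varepsilon_\varphi},
\]
and similarly $\trb{Q_hu-u_h}\le\trb{Q_hu-\Pi_h^Ru}+\trb{\varepsilon_u}$, with the analogous splittings for the $L^2$-type quantities $\norm{\cdot}$ (i.e.\ $\norm{\cdot}_{0,h}$ restricted to the cell part, or $\norm{(\cdot)_0}$). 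The second term in each splitting is controlled by Theorem on $\varepsilon_\varphi$ (estimates \eqref{phi_0h}) and the Theorem on $\varepsilon_u$ (estimates \eqref{u_trb}, \eqref{u_0h}), after passing between $\norm{\cdot}_{0,h}$ and $\trb{\cdot}$ via Lemma \ref{lemma-0h-trb} / \eqref{trb0h} where needed. For $\trb{\varepsilon_\varphi}$ in particular I would use $\trb{\varepsilon_\varphi}\lesssim h^{-1}\norm{\varepsilon_\varphi}_{0,h}$ from \eqref{trb0h}, which accounts for the drop by one power of $h$ in \eqref{trb-Qhphi-phih} and \eqref{L2-Qhphi-phih} relative to \eqref{phi_0h} — i.e.\ the exponents $h^m$ and $h^{n-2}$ there.

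The first term in each splitting is a pure approximation-theory estimate comparing two projections of the same function, and I would handle it by inserting the function itself: $\trb{Q_h\varphi-\Pi_h^N\varphi}\le\trb{Q_h\varphi-\varphi}+\trb{\varphi-\Pi_h^N\varphi}$ (here $\trb{\cdot}$ is applied to elements of $V_h+H^1$, which is legitimate since it is defined on that space). The term $\trb{\varphi-\Pi_h^N\varphi}$ is exactly the Neumann-projection error estimate \eqref{Nh_trb} from the appendix, giving $h^m\norm{\varphi}_{m+1}$; likewise $\trb{u-\Pi_h^Ru}\lesssim h^n\norm{u}_{n+1}$ by \eqref{Rh_trb}. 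The remaining piece $\trb{Q_h\varphi-\varphi}$ (and $\trb{Q_hu-u}$) is bounded by standard $L^2$-projection/trace/inverse estimates: using the norm equivalence \eqref{norm-equ} it suffices to bound $\norm{Q_h\varphi-\varphi}_{1,h}$, which splits into $\norm{\nabla(Q_0\varphi-\varphi)}_T$ and $h_T^{-1/2}\norm{(Q_0\varphi-\varphi)-(Q_b\varphi-\varphi)}_{\partial T}$, both of which are $O(h^m\norm{\varphi}_{m+1})$ by the projection and trace inequalities already cited in the earlier lemmas. The $L^2$ versions \eqref{L2-Qhphi-phih}–\eqref{L2-Qhu-uh} are obtained the same way, using $\norm{\varphi-\Pi_0^N\varphi}$ and $\norm{u-\Pi_0^Ru}$ from \eqref{Nh_L2}, \eqref{Rh_L2} together with $\norm{\varphi-Q_0\varphi}\lesssim h^{m+1}\norm{\varphi}_{m+1}$.

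The only real bookkeeping is matching the $h$-exponents so that the projection-error contributions do not dominate the $\varepsilon$-contributions. For \eqref{trb-Qhu-uh} one needs $\trb{u-\Pi_h^Ru}\lesssim h^n\norm{u}_{n+1}$ and $\trb{Q_hu-u}\lesssim h^n\norm{u}_{n+1}$ against $\trb{\varepsilon_u}\lesssim h^m\norm{\varphi}_{m+1}+h^n\norm{u}_{n+1}$ from \eqref{u_trb} — these are consistent, so \eqref{trb-Qhu-uh} follows immediately. For \eqref{L2-Qhu-uh} one compares $h^{n+1}$-order projection errors with $\norm{\varepsilon_u}_{0,h}\lesssim h^{m+1}\norm{\varphi}_{m+1}+h^{n+1}\norm{u}_{n+1}$ from \eqref{u_0h}; again consistent. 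The main (mild) obstacle is \eqref{trb-Qhphi-phih}: because $\trb{\varepsilon_\varphi}$ is only obtained indirectly through $\norm{\varepsilon_\varphi}_{0,h}$ with a loss of $h^{-1}$, the resulting rate $h^{n-2}\norm{u}_{n+1}$ is genuinely worse than the projection-error rate $h^m\norm{\varphi}_{m+1}$, so one must be careful to state the bound with that $h^{n-2}$ term rather than optimistically writing $h^n$. Everything else is an assembly of the cited trace, inverse, projection, and norm-equivalence inequalities, so no new ideas are required beyond organizing the triangle inequalities in the right order.
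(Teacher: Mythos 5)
Your proposal is correct and is essentially the paper's own argument: the same triangle-inequality splittings through $\Pi_h^N\varphi$ and $\Pi_h^Ru$, the same use of \eqref{phi_0h}, \eqref{u_trb} and \eqref{u_L2} for the $\varepsilon$-terms, and the same $h^{-1}$ loss via \eqref{trb0h} that produces the $h^{n-2}$ term in \eqref{trb-Qhphi-phih}. The only cosmetic difference is that you re-derive $\trb{Q_h\varphi-\varphi}$ via the norm equivalence \eqref{norm-equ} (which the paper states only for $v\in V_h$, whereas $Q_h\varphi-\varphi\in V_h+H^1(\Omega)$) instead of simply invoking Lemma \ref{Qhv_err}, which gives exactly that bound directly.
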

  \begin{proof}
      From Lemma \ref{Qhv_err}, Lemma \ref{trb_err}, (\ref{trb0h}), (\ref{phi_0h}), and (\ref{u_trb}), we  get (\ref{trb-Qhphi-phih}) and (\ref{trb-Qhu-uh}).

      By (\ref{QhRh_0h}), (\ref{QhNh_0h}), (\ref{phi_0h}), and (\ref{u_L2}), we have (\ref{L2-Qhphi-phih}) and (\ref{L2-Qhu-uh}).
  \end{proof}

\section{Numerical Results}\label{Section:Numex}
This section conducts numerical experiments to illustrate the convergence rates of the SFWG finite element method proposed in this study. The error for the SFWG solution is measured using the following norms:
\begin{align*}
    \trb{Q_hv-v_h}^2&=\sumT\int _T |\GW (Q_hv-v_h)|^2 dT,\quad (A~discrete~ H^1~norm)\\
    \norm{Q_hv-v_h}^2&=\sumT\int _T |Q_0v-v_0|^2 dT,\qquad \quad\,\,(A~discrete~L^2~norm)
\end{align*}

In the following computations, we employ uniform triangular grids, rectangular grids, and polygonal grids, as shown in Figures \ref{tri-level}-\ref{poly-level}, respectively.

\begin{figure}[h!]
	\centering
        \includegraphics[width=0.3 \columnwidth,height=0.3\linewidth]{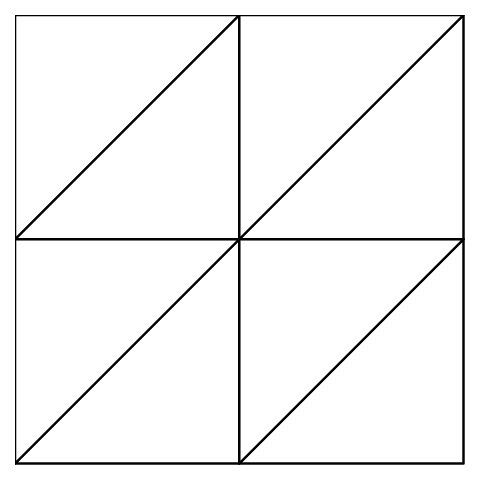}
        \includegraphics[width=0.3 \columnwidth,height=0.3\linewidth]{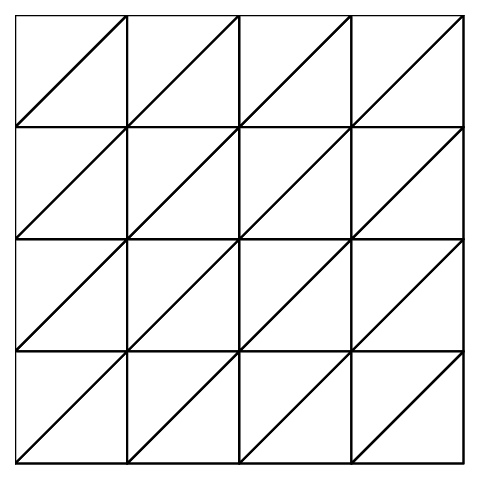}
        \includegraphics[width=0.3 \columnwidth,height=0.3\linewidth]{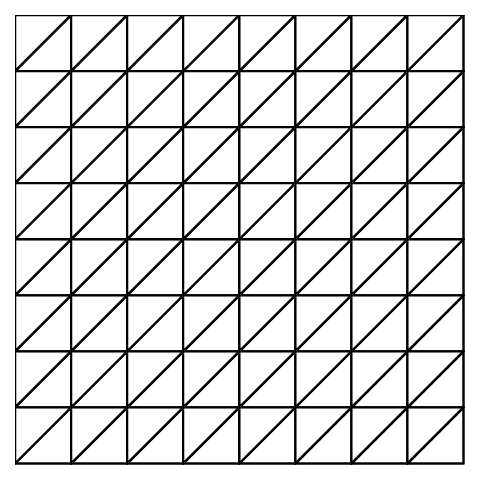}\\
	\vspace*{-4mm}
	\caption{The uniform triangular meshes  with $n=2,~4,~8$}
	\label{tri-level}
\end{figure}

\begin{figure}[h!]
	\centering
        \includegraphics[width=0.3 \columnwidth,height=0.3\linewidth]{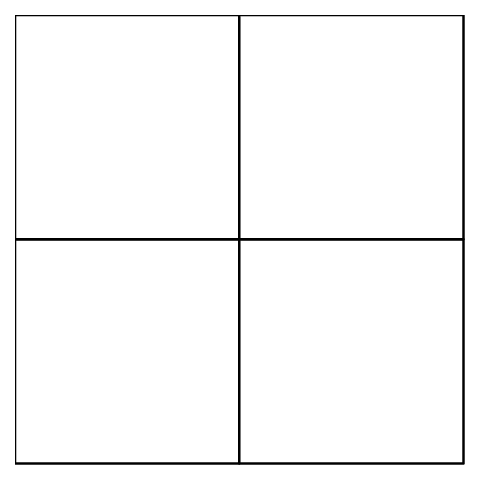}
        \includegraphics[width=0.3 \columnwidth,height=0.3\linewidth]{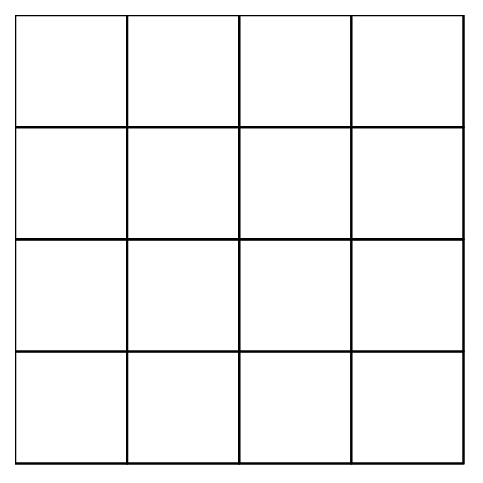}
        \includegraphics[width=0.3 \columnwidth,height=0.3\linewidth]{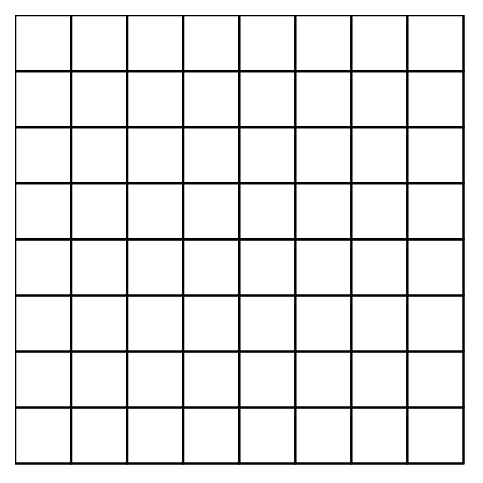}\\
	\vspace*{-4mm}
	\caption{The uniform rectangular meshes with $n=2,~4,~8$}
	\label{rect-level}
\end{figure}

\begin{figure}[h!]
	\centering
	\includegraphics[width=0.3          \columnwidth,height=0.3\linewidth]{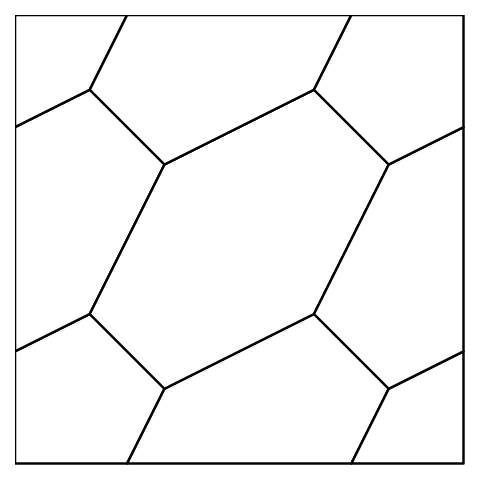}
        \includegraphics[width=0.3 \columnwidth,height=0.3\linewidth]{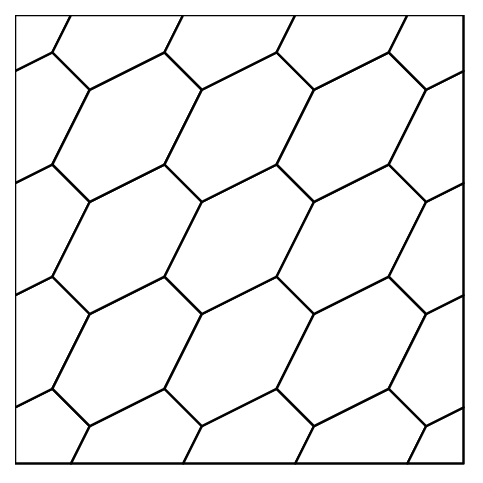}
        \includegraphics[width=0.3 \columnwidth,height=0.3\linewidth]{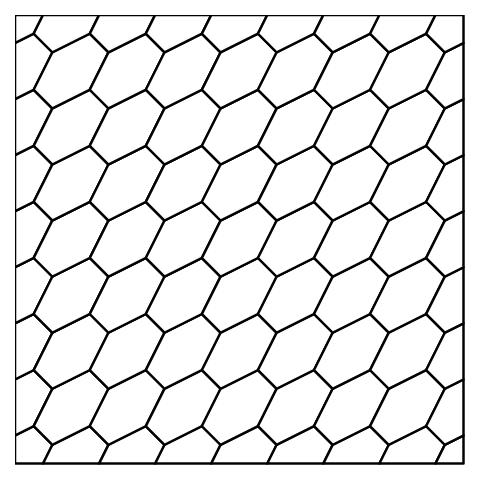}\\
	\vspace*{-4mm}
	\caption{The polygon meshes with $n=2,~4,~8$}
	\label{poly-level}
\end{figure}

\begin{example}\label{ex1} In this example, we consider the Biharmonic equation on the square domain $\Omega = (0,1)^2$, and 
the exact solution is chosen as follows
\begin{align*}
  u = x^2(1-x)^2y^2(1-y)^2.
\end{align*}
It is clear that $u|_{\partial\Omega}=0$ and $\frac{\partial u}{\partial\bn}|_{\partial\Omega}=0$.
\end{example}
The errors and convergence rates for the SFWG method are listed in  Tables \ref{ex1_tri}-\ref{ex1_poly}. From the tables, we can find the convergence rates for the error $Q_hu-u_h$ in the $H^1$ and $L^2$ norms are of order $O(h^{k})$ and $O(h^{k+1})$. The error convergence order is $O(h^{k-1})$ for $Q_h\varphi -\varphi_h$ in the $L^2$ norm. The numerical results are coincident with our theoretical analysis.

\begin{table}
  \begin{center}
    \caption{Errors and convergence rates on triangular meshes in Example \ref{ex1}}\label{ex1_tri}
    \resizebox{\textwidth}{!}{
\begin{tabular}{c|cc|cc|cc|cc}
  \hline $n$ & $\trb{Q_h\varphi -\varphi _h} $ & Rate & $\trb{Q_h u-u_h}$& Rate & $\norm{Q_h\varphi -\varphi _h}$ & Rate & $\norm{Q_hu-u_h}$ & Rate \\
  \hline & \multicolumn{8}{|c}{ By the $P_2$ weak Galerkin finite element } \\
    \hline 16 & $5.8675 \mathrm{E}-02$ & -- & $7.8153 \mathrm{E}-05$ & -- & $3.3439 \mathrm{E}-04$ & -- & $4.1789 \mathrm{E}-07$ & -- \\
     32 & $4.3407 \mathrm{E}-02$ & 0.43 & $1.9630 \mathrm{E}-05$ & 1.99 & $1.1637 \mathrm{E}-04$ & 1.52 & $5.0737 \mathrm{E}-08$ & 3.04 \\
     64 & $3.1476 \mathrm{E}-02$ & 0.46 & $4.9142 \mathrm{E}-06$ & 2.00 & $4.1265 \mathrm{E}-05$ & 1.50 & $6.2747 \mathrm{E}-09$ & 3.02 \\
     128 & $2.2556 \mathrm{E}-02$ & 0.48 & $1.2291 \mathrm{E}-06$ & 2.00 & $1.4663 \mathrm{E}-05$ & 1.49 & $7.8111 \mathrm{E}-10$ & 3.01 \\
  \hline & \multicolumn{8}{|c}{ By the $P_3$ weak Galerkin finite element } \\
    \hline 16 & $4.9264 \mathrm{E}-03$ & -- & $3.2862 \mathrm{E}-06$ & -- & $3.1500 \mathrm{E}-05$ & -- & $9.8363 \mathrm{E}-09$ & -- \\
     32 & $1.8548 \mathrm{E}-03$ & 1.41 & $4.1386 \mathrm{E}-07$ & 2.99 & $6.0192 \mathrm{E}-06$ & 2.39 & $5.9522 \mathrm{E}-10$ & 4.05 \\
     64 & $6.7498 \mathrm{E}-04$ & 1.46 & $5.1863 \mathrm{E}-08$ & 3.00 & $1.1038 \mathrm{E}-06$ & 2.45 & $3.6593 \mathrm{E}-11$ & 4.02 \\
     128 & $2.4189 \mathrm{E}-04$ & 1.48 & $6.4890 \mathrm{E}-09$ & 3.00 & $1.9855 \mathrm{E}-07$ & 2.47 & $2.2686 \mathrm{E}-12$ & 4.01 \\
  \hline
  \end{tabular}}
\end{center}
\end{table}

\begin{table}
  \begin{center}
    \caption{Errors and convergence rates on rectangular meshes in Example \ref{ex1}}\label{ex1_rect}
    \resizebox{\textwidth}{!}{
    \begin{tabular}{c|cc|cc|cc|cc}
    \hline $n$ & $\trb{Q_h\varphi -\varphi _h} $ & Rate & $\trb{Q_h u-u_h}$& Rate & $\norm{Q_h\varphi -\varphi _h}$ & Rate & $\norm{Q_hu-u_h}$ & Rate \\
    \hline & \multicolumn{8}{|c}{ By the $P_2$ weak Galerkin finite element } \\
    \hline 16 & $1.8588 \mathrm{E}-01$ & -- & $8.8362 \mathrm{E}-05$ & -- & $6.0219 \mathrm{E}-04$ & -- & $3.0180 E-06$ & -- \\
     32 & $1.4556 \mathrm{E}-01$ & 0.35 & $2.2756 \mathrm{E}-05$ & 1.96 & $2.1066 \mathrm{E}-04$ & 1.52 & $4.5616 E-07$ & 2.73 \\
     64 & $1.0805 \mathrm{E}-01$ & 0.43 & $5.7975 \mathrm{E}-06$ & 1.97 & $7.4677\mathrm{E}-05$ & 1.50 & $6.3231 E-08$ & 2.85 \\
     128 & $7.8249 \mathrm{E}-02$ & 0.47 & $1.4653 \mathrm{E}-06$ & 1.98 & $2.6628 \mathrm{E}-05$ & 1.49 & $8.3385 \mathrm{E}-09$ & 2.92 \\

    \hline & \multicolumn{8}{|c}{ By the $P_3$ weak Galerkin finite element } \\
    \hline 16 & $1.2964 \mathrm{E}-02$ & -- & $4.9116 \mathrm{E}-06$ & -- & $6.9365 \mathrm{E}-05$ & -- & $1.8319 \mathrm{E}-08$ & -- \\
     32 & $5.2127 \mathrm{E}-03$ & 1.31 & $6.3524 \mathrm{E}-07$ & 2.95 & $1.4350 \mathrm{E}-05$ & 2.27 & $9.7185 \mathrm{E}-10$ & 4.24 \\
     64 & $1.9622 \mathrm{E}-03$ & 1.41 & $8.0882 E-08$ & 2.97 & $2.7562 \mathrm{E}-06$ & 2.38 & $5.3668 \mathrm{E}-11$ & 4.18 \\
     128 & $7.1558 \mathrm{E}-04$ & 1.46 & $1.0208 \mathrm{E}-08$ & 2.99 & $5.0843 \mathrm{E}-07$ & 2.44 & $3.1624 \mathrm{E}-12$ & 4.08 \\
    \hline

    \end{tabular}}
  \end{center}
\end{table}

\begin{table}
  \begin{center}
    \caption{Errors and convergence rates on polygonal meshes in Example \ref{ex1}}\label{ex1_poly}
    \resizebox{\textwidth}{!}{
    \begin{tabular}{c|cc|cc|cc|cc}
    \hline $n$ & $\trb{Q_h\varphi -\varphi _h} $ & Rate & $\trb{Q_h u-u_h}$& Rate & $\norm{Q_h\varphi -\varphi _h}$ & Rate & $\norm{Q_hu-u_h}$ & Rate \\
    \hline & \multicolumn{8}{|c}{ By the $P_2$ weak Galerkin finite element } \\
  \hline 16 & $4.4235 \mathrm{E}-01$ & -- & $1.6183 \mathrm{E}-04$ & -- & $1.6034 \mathrm{E}-03$ & -- & $5.6366 \mathrm{E}-06$ & -- \\
   32 & $3.5657\mathrm{E}-01$ & 0.31 & $4.2222 \mathrm{E}-05$ & 1.94 & $6.1355 \mathrm{E}-04$ & 1.39 & $8.7874 \mathrm{E}-07$ & 2.68 \\
   64 & $2.6774 \mathrm{E}-01$ & 0.41 & $1.0802 \mathrm{E}-05$ & 1.97 & $2.2588 \mathrm{E}-04$ & 1.44 & $1.2324 \mathrm{E}-07$ & 2.83 \\
   128 & $1.9482 \mathrm{E}-01$ & 0.46 & $2.7343 \mathrm{E}-06$ & 1.98 & $8.1581 E-05$ & 1.47 & $1.6349 \mathrm{E}-08$ & 2.91 \\

    \hline & \multicolumn{8}{|c}{ By the $P_3$ weak Galerkin finite element } \\
    \hline 16 & $9.4109 \mathrm{E}-03$ & -- & $8.5082 \mathrm{E}-06$ & -- & $5.6358 \mathrm{E}-05$ & -- & $1.7179 \mathrm{E}-08$ & -- \\ 
     32 & $3.6245 \mathrm{E}-03$ & 1.38 & $1.1584 \mathrm{E}-06$ & 2.88 & $1.1518 \mathrm{E}-05$ & 2.29 & $1.1435 \mathrm{E}-09$ & 3.91 \\ 
     64 & $1.3389 \mathrm{E}-03$ & 1.44 & $1.5123 \mathrm{E}-07$ & 2.94 & $2.1957 \mathrm{E}-06$ & 2.39 & $7.4368 \mathrm{E}-11$ & 3.94 \\ 
     128 & $4.8379 \mathrm{E}-04$ & 1.47 & $1.9324 \mathrm{E}-08$ & 2.97 & $4.0316 \mathrm{E}-07$ & 2.45 & $4.7472 \mathrm{E}-12$ & 3.97\\
    \hline
    \end{tabular}}
  \end{center}
\end{table}

\begin{example}\label{ex2}
We choose the same solution area as in the above example. The exact solution is 
\begin{align*}
  u = \sin {(\pi x)}\sin{(\pi y)}.
\end{align*}
This shows that $u|_{\partial\Omega}=0$ and $\frac{\partial u}{\partial\bn}|_{\partial\Omega}\neq 0$.
\end{example}
The related results are shown in Tables \ref{ex2_tri}-\ref{ex2_poly}. From Tables \ref{ex2_tri}-\ref{ex2_poly}, we can see the same order of convergence as Example \ref{ex1}, which coincides with the theorem.
\begin{table}
  \begin{center}
    \caption{Errors and convergence rates on triangular meshes in Example \ref{ex2}}\label{ex2_tri}
    \resizebox{\textwidth}{!}{
    \begin{tabular}{c|cc|cc|cc|cc}
    \hline $n$ & $\trb{Q_h\varphi -\varphi _h} $ & Rate & $\trb{Q_h u-u_h}$& Rate & $\norm{Q_h\varphi -\varphi _h}$ & Rate & $\norm{Q_hu-u_h}$ & Rate \\
    \hline & \multicolumn{8}{|c}{ By the $P_2$ weak Galerkin finite element } \\
    \hline 16 & $9.1811 E+00$ & -- & $7.4385 \mathrm{E}-03$ & -- & $5.8928 \mathrm{E}-02$ & -- & $3.4903 \mathrm{E}-05$ & -- \\
     32 & $6.5266 \mathrm{E}+00$ & 0.49 & $1.8629 \mathrm{E}-03$ & 2.00 & $2.0861 \mathrm{E}-02$ & 1.50 & $4.1944 \mathrm{E}-06$ & 3.06 \\
     64 & $4.6215 \mathrm{E}+00$ & 0.50 & $4.6598 \mathrm{E}-04$ & 2.00 & $7.3777 \mathrm{E}-03$ & 1.50 & $5.1526 \mathrm{E}-07$ & 3.03 \\
     128 & $3.2691 E+00$ & 0.50 & $1.1652 \mathrm{E}-04$ & 2.00 & $2.6086 \mathrm{E}-03$ & 1.50 & $6.3906 \mathrm{E}-08$ & 3.01 \\

    \hline & \multicolumn{8}{|c}{ By the $P_3$ weak Galerkin finite element } \\
    \hline 16 & $2.5564 \mathrm{E}-01$ & -- & $2.0336 \mathrm{E}-04$ & -- & $1.6575 \mathrm{E}-03$ & -- & $5.8062 \mathrm{E}-07$ & -- \\
     32 & $9.0186 \mathrm{E}-02$ & 1.50 & $2.5500 \mathrm{E}-05$ & 3.00 & $2.9536 \mathrm{E}-04$ & 2.49 & $3.5466 \mathrm{E}-08$ & 4.03 \\
     64 & $3.1828 \mathrm{E}-02$ & 1.50 & $3.1915 \mathrm{E}-06$ & 3.00 & $5.2358 \mathrm{E}-05$ & 2.50 & $2.1929 \mathrm{E}-09$ & 4.02 \\
     128 & $1.1241 \mathrm{E}-02$ & 1.50 & $3.9914 \mathrm{E}-07$ & 3.00 & $9.2658 \mathrm{E}-06$ & 2.50 & $1.3635 \mathrm{E}-10$ & 4.01 \\
  \hline

    \end{tabular}}
  \end{center}
\end{table}

\begin{table}
  \begin{center}
    \caption{Errors and convergence rates on rectangular meshes in Example \ref{ex2}}\label{ex2_rect}
    \resizebox{\textwidth}{!}{
    \begin{tabular}{c|cc|cc|cc|cc}
    \hline $n$ & $\trb{Q_h\varphi -\varphi _h} $ & Rate & $\trb{Q_h u-u_h}$& Rate & $\norm{Q_h\varphi -\varphi _h}$ & Rate & $\norm{Q_hu-u_h}$ & Rate \\
    \hline & \multicolumn{8}{|c}{ By the $P_2$ weak Galerkin finite element } \\
    \hline 16 & $9.7153 \mathrm{E}+00$ & -- & $9.7509 \mathrm{E}-03$ & -- & $3.9732 \mathrm{E}-02$ & -- & $3.1204 \mathrm{E}-05$ & -- \\
     32 & $6.9398 \mathrm{E}+00$ & 0.49 & $2.4386 \mathrm{E}-03$ & 2.00 & $1.4154 \mathrm{E}-02$ & 1.49 & $2.7188 \mathrm{E}-06$ & 3.52 \\
     64 & $4.9198 \mathrm{E}+00$ & 0.50 & $6.0947 \mathrm{E}-04$ & 2.00 & $5.0143 \mathrm{E}-03$ & 1.50 & $2.8334 \mathrm{E}-07$ & 3.26 \\
     128 & $3.4811 \mathrm{E}+00$ & 0.50 & $1.5233 \mathrm{E}-04$ & 2.00 & $1.7738 \mathrm{E}-03$ & 1.50 & $3.3224 \mathrm{E}-08$ & 3.09 \\

    \hline & \multicolumn{8}{|c}{ By the $P_3$ weak Galerkin finite element } \\
    \hline 16 & $1.5239 \mathrm{E}-01$ & -- & $3.0553 \mathrm{E}-04$ & -- & $5.6483 \mathrm{E}-04$ & -- & $6.9886 \mathrm{E}-07$ & -- \\
     32 & $5.2705 \mathrm{E}-02$ & 1.53 & $3.8294 \mathrm{E}-05$ & 3.00 & $9.5049 \mathrm{E}-05$ & 2.57 & $4.3065 \mathrm{E}-08$ & 4.02 \\
     64 & $1.8493 \mathrm{E}-02$ & 1.51 & $4.7900 \mathrm{E}-06$ & 3.00 & $1.6481 \mathrm{E}-05$ & 2.53 & $2.6808 \mathrm{E}-09$ & 4.01 \\
     128 & $6.5198 \mathrm{E}-03$ & 1.50 & $5.9886 \mathrm{E}-07$ & 3.00 & $2.8899 \mathrm{E}-06$ & 2.51 & $1.6737 \mathrm{E}-10$ & 4.00 \\
    \hline
    \end{tabular}}
  \end{center}
\end{table}

\begin{table}
  \begin{center}
    \caption{Errors and convergence rates on polygonal meshes in Example \ref{ex2}}\label{ex2_poly}
    \resizebox{\textwidth}{!}{
    \begin{tabular}{c|cc|cc|cc|cc}
    \hline $n$ & $\trb{Q_h\varphi -\varphi _h} $ & Rate & $\trb{Q_h u-u_h}$& Rate & $\norm{Q_h\varphi -\varphi _h}$ & Rate & $\norm{Q_hu-u_h}$ & Rate \\
    \hline & \multicolumn{8}{|c}{ By the $P_2$ weak Galerkin finite element } \\
    \hline 16 & $5.1328 \mathrm{E}+01$ & -- & $1.2790 \mathrm{E}-02$ & -- & $2.4905 \mathrm{E}-01$ & -- & $2.1091 \mathrm{E}-04$ & -- \\
     32 & $3.6720 \mathrm{E}+01$ & 0.48 & $3.1524 \mathrm{E}-03$ & 2.02 & $8.8945 \mathrm{E}-02$ & 1.49 & $2.6967 \mathrm{E}-05$ & 2.97 \\
     64 & $2.6046 \mathrm{E}+01$ & 0.50 & $7.8318 \mathrm{E}-04$ & 2.01 & $3.1523 \mathrm{E}-02$ & 1.50 & $3.5157 \mathrm{E}-06$ & 2.94 \\
     128 & $1.8433 \mathrm{E}+01$ & 0.50 & $1.9526 \mathrm{E}-04$ & 2.00 & $1.1151 \mathrm{E}-02$ & 1.50 & $4.5238 \mathrm{E}-07$ & 2.96 \\
  \hline

    \hline & \multicolumn{8}{|c}{ By the $P_3$ weak Galerkin finite element } \\
    \hline 16 & $6.2345 \mathrm{E}-01$ & -- & $4.9235 \mathrm{E}-04$ & -- & $3.9976 \mathrm{E}-03$ & -- & $5.6491 \mathrm{E}-07$ & -- \\
     32 & $2.2038 \mathrm{E}-01$ & 1.50 & $6.3217 \mathrm{E}-05$ & 2.96 & $7.2734 \mathrm{E}-04$ & 2.46 & $3.1102 \mathrm{E}-08$ & 4.18 \\
     64 & $7.7966 \mathrm{E}-02$ & 1.50 & $8.0051 \mathrm{E}-06$ & 2.98 & $1.3035 \mathrm{E}-04$ & 2.48 & $1.8314 \mathrm{E}-09$ & 4.09 \\
     128 & $2.7581 \mathrm{E}-02$ & 1.50 & $1.0070 \mathrm{E}-06$ & 2.99 & $2.3197 \mathrm{E}-05$ & 2.49 & $1.1132 \mathrm{E}-10$ & 4.04 \\
    \hline

    \end{tabular}}
  \end{center}
\end{table}

\begin{example}\label{ex3}
We choose the same solution area as in the above examples. The exact solution is
\begin{align*}
  u = e^{x+y},
\end{align*}
which satisfies $u|_{\partial\Omega}\neq 0$ and $\frac{\partial u}{\partial\bn}|_{\partial\Omega}\neq 0$.
\end{example}
The relevant results are displayed in Tables \ref{ex3_tri}-\ref{ex3_poly}, which are consistent with the theoretical results.

\begin{table}
  \begin{center}
    \caption{Errors and convergence rates on triangular meshes in Example \ref{ex3}}\label{ex3_tri}
    \resizebox{\textwidth}{!}{
    \begin{tabular}{c|cc|cc|cc|cc}
    \hline $n$ & $\trb{Q_h\varphi -\varphi _h} $ & Rate & $\trb{Q_h u-u_h}$& Rate & $\norm{Q_h\varphi -\varphi _h}$ & Rate & $\norm{Q_hu-u_h}$ & Rate \\
    \hline & \multicolumn{8}{|c}{ By the $P_2$ weak Galerkin finite element } \\
    \hline 16 & $2.4698 \mathrm{E}+00$ & -- & $2.1195 \mathrm{E}-03$ & -- & $1.6056 \mathrm{E}-02$ & -- & $8.4842 \mathrm{E}-06$ & -- \\
     32 & $1.7491 \mathrm{E}+00$ & 0.50 & $5.3048 \mathrm{E}-04$ & 2.00 & $5.6647 \mathrm{E}-03$ & 1.50 & $1.0618 \mathrm{E}-06$ & 3.00 \\
     64 & $1.2376 \mathrm{E}+00$ & 0.50 & $1.3269 \mathrm{E}-04$ & 2.00 & $2.0004 \mathrm{E}-03$ & 1.50 & $1.3289 \mathrm{E}-07$ & 3.00 \\
     128 & $8.7543 \mathrm{E}-01$ & 0.50 & $3.3179 \mathrm{E}-05$ & 2.00 & $7.0679 \mathrm{E}-04$ & 1.50 & $1.6623 \mathrm{E}-08$ & 3.00 \\

    \hline & \multicolumn{8}{|c}{ By the $P_3$ weak Galerkin finite element } \\
    \hline 16 & $2.5285 \mathrm{E}-02$ & -- & $1.8753 \mathrm{E}-05$ & -- & $1.6657 \mathrm{E}-04$ & -- & $5.3828 \mathrm{E}-08$ & -- \\
     32 & $8.9885 \mathrm{E}-03$ & 1.49 & $2.3491 \mathrm{E}-06$ & 3.00 & $2.9758 \mathrm{E}-05$ & 2.48 & $3.2872 \mathrm{E}-09$ & 4.03 \\
     64 & $3.1862 \mathrm{E}-03$ & 1.50 & $2.9394 \mathrm{E}-07$ & 3.00 & $5.2877 \mathrm{E}-06$ & 2.49 & $2.0294 \mathrm{E}-10$ & 4.02 \\
     128 & $1.1326 \mathrm{E}-03$ & 1.49 & $3.6753 \mathrm{E}-08$ & 3.00 & $9.4181 \mathrm{E}-07$ & 2.49 & $1.2997 \mathrm{E}-11$ & 3.96 \\
    \hline

    \end{tabular}}
  \end{center}
\end{table}

\begin{table}
  \begin{center}
    \caption{Errors and convergence rates on polygonal meshes in Example \ref{ex3}}\label{ex3_poly}
    \resizebox{\textwidth}{!}{
    \begin{tabular}{c|cc|cc|cc|cc}
    \hline $n$ & $\trb{Q_h\varphi -\varphi _h} $ & Rate & $\trb{Q_h u-u_h}$& Rate & $\norm{Q_h\varphi -\varphi _h}$ & Rate & $\norm{Q_hu-u_h}$ & Rate \\
    \hline & \multicolumn{8}{|c}{ By the $P_2$ weak Galerkin finite element } \\
    \hline 16 & $1.2510 \mathrm{E}+01$ & -- & $3.3152 \mathrm{E}-03$ & -- & $6.3503 \mathrm{E}-02$ & -- & $2.6849 \mathrm{E}-05$ & -- \\
     32 & $9.1780 \mathrm{E}+00$ & 0.45 & $8.2523 \mathrm{E}-04$ & 2.01 & $2.3430 \mathrm{E}-02$ & 1.44 & $3.6017 \mathrm{E}-06$ & 2.90 \\
     64 & $6.6115 \mathrm{E}+00$ & 0.47 & $2.0561 \mathrm{E}-04$ & 2.00 & $8.4615 \mathrm{E}-03$ & 1.47 & $4.6959 \mathrm{E}-07$ & 2.94 \\
     128 & $4.7189 \mathrm{E}+00$ & 0.49 & $5.1299 \mathrm{E}-05$ & 2.00 & $3.0235 \mathrm{E}-03$ & 1.48 & $5.9940 \mathrm{E}-08$ & 2.97 \\

    \hline & \multicolumn{8}{|c}{ By the $P_3$ weak Galerkin finite element } \\
    \hline 16 & $5.4327 \mathrm{E}-02$ & -- & $4.3784 \mathrm{E}-05$ & -- & $3.5923 \mathrm{E}-04$ & -- & $4.3622 \mathrm{E}-08$ & -- \\
     32 & $1.9577 \mathrm{E}-02$ & 1.47 & $5.6822 \mathrm{E}-06$ & 2.95 & $6.5444 \mathrm{E}-05$ & 2.46 & $2.3973 \mathrm{E}-09$ & 4.19 \\
     64 & $6.9872 \mathrm{E}-03$ & 1.49 & $7.2357 \mathrm{E}-07$ & 2.97 & $1.1742 \mathrm{E}-05$ & 2.48 & $1.3815 \mathrm{E}-10$ & 4.12 \\
     128 & $2.4765 \mathrm{E}-03$ & 1.50 & $9.1265 \mathrm{E}-08$ & 2.99 & $2.0900 \mathrm{E}-06$ & 2.49 & $1.1196 \mathrm{E}-11$ & 3.63 \\
    \hline

    \end{tabular}}
  \end{center}
\end{table}

From Tables \ref{ex1_tri}-\ref{ex3_poly}, it can be seen that under various boundary conditions, the numerical examples consistently achieve the optimal convergence order, aligning with previous theoretical analyses and affirming the accuracy of the SFWG method. Notably, the convergence order of $\trb{Q_h\varphi -\varphi _h}$ is half order higher than the theoretical order $O(h^{k-2})$.

\section{Conclusion}\label{Section:conclusion}
In this paper, we propose a stabilizer-free weak Galerkin (SFWG) method for the Ciarlet-Raviart mixed variational form of the Biharmonic equation and analyze the well-posedness and convergence of the SFWG method.
We derive the optimal error estimates about the actual variable $u$ in the $H^1$ and $L^2$ norms. Finally, we use numerical examples to verify the theoretical analysis.
In future work, we will continue to study the WG related methods for the mixed form of the Biharmonic equation to reach the optimal error estimates in $H^1$ and $L^2$ norms for the auxiliary variable $\varphi$.

\section*{Acknowledgements}
This work was supported by the National Natural Science Foundation of China (grant No. 12271208, 12201246, 22341302), the National Key Research and Development Program of China (grant No. 2020YFA0713602, 2023YFA1008803), and the Key Laboratory of Symbolic Computation and Knowledge Engineering of Ministry of Education of China housed at Jilin University.

\section*{Data Availability}
The code used in this work will be made available
upon request to the authors.

\appendix
\section{Some Technical Results of Ritz and Neumann Projections}\label{appendix:RhNh}
\begin{lemma}\label{RhNh-err-equ}
For the definitions of $\Pi_h^R$ and $\Pi_h^N$, we have the following error equations:
    \begin{align}
  \label{Rh_err_equ}(\GW (\Pi_h^Rv-v),\GW \psi)_{\T _h}=&~l(v,\psi),\qquad\forall\,\psi\in V_h^0,\\
  \label{Nh_err_equ}(\GW (\Pi_h^Nv-v),\GW \psi)_{\T _h}=&~l(v,\psi),\qquad\forall\,\psi\in V_h,
\end{align}
where $$l(v,\psi)=\langle(\dQ _h\G v-\G v)\cdot\bn,\psi _0 -\psi _b\rangle_{\partial\T _h}.$$

For any $v\in H^{m+1}(\Omega),~(1\leq m\leq j+1)$ and for any $\psi\in V_h$, we can deduce the estimate as follows
\begin{align}
\big|l(v,\psi)\big|\lesssim~h^m\norm{v}_{m+1}\trb\psi.\label{l-est}
\end{align}
\end{lemma}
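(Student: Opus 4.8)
\textbf{Proof proposal for Lemma~\ref{RhNh-err-equ}.}
The plan is to split the statement into three pieces: the two error equations \eqref{Rh_err_equ}--\eqref{Nh_err_equ}, and the estimate \eqref{l-est}. For the error equations, the starting point is identity \eqref{gradEX}, which says $\GW v = \dQ_h(\G v)$ for any $v\in H^1(\Omega)$, so that for any $\psi = \{\psi_0,\psi_b\}$ in the relevant test space we can write
\begin{align*}
(\GW v,\GW\psi)_{\T_h} = (\dQ_h\G v,\GW\psi)_{\T_h} = (\G v,\GW\psi)_{\T_h},
\end{align*}
where the last equality uses that $\GW\psi\in[P_j(T)]^d$ and $\dQ_h$ is the $L^2$-projection onto that space. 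Then I would apply integration by parts on each $T$ and the definition \eqref{def-wgradient} of $\GW$:
\begin{align*}
(\G v,\GW\psi)_{\T_h} = -(\di(\GW\psi),v)_{\T_h}+\langle v,\GW\psi\cdot\bn\rangle_{\p\T_h}
\end{align*}
and compare with $-(v_0,\di(\GW\psi))_{\T_h}+\langle v_b,\GW\psi\cdot\bn\rangle_{\p\T_h}$, which is $(\GW\psi,\GW\psi')$-type pairing... more precisely I would instead expand $(\GW\Pi_h^R v,\GW\psi)$ via \eqref{Rh} and $(\GW v,\GW\psi)$ via the definition of $\GW$ on $v\in V_h$ vs. $v\in H^1$, and the difference of the two boundary terms collapses to $\langle(\dQ_h\G v-\G v)\cdot\bn,\psi_0-\psi_b\rangle_{\p\T_h}=l(v,\psi)$ after inserting $\GW\psi\cdot\bn$ and using $\langle\psi_b,(\dQ_h\G v - \G v)\cdot\bn\rangle$ telescoping (it vanishes on interior edges by single-valuedness of $\psi_b$ and on $\p\Omega$ either because $\psi_b=0$ for the Ritz case or is absorbed into the Neumann boundary term in \eqref{Nh}). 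The Dirichlet case uses $\psi\in V_h^0$ so the $\p\Omega$ contribution of $\psi_b$ drops; the Neumann case keeps $\psi\in V_h$ but the extra $\langle\G v\cdot\bn,\psi_b\rangle_{\p\Omega}$ term in \eqref{Nh} is exactly what is needed to cancel the boundary leftover, so both identities land on the same right-hand side $l(v,\psi)$.

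For the bound \eqref{l-est}, I would start from $l(v,\psi)=\langle(\dQ_h\G v-\G v)\cdot\bn,\psi_0-\psi_b\rangle_{\p\T_h}$, apply Cauchy--Schwarz edge-by-edge after inserting the scaling weights $h_T^{1/2}$ and $h_T^{-1/2}$:
\begin{align*}
|l(v,\psi)|\le\Big(\sumT h_T\normPT{\dQ_h\G v-\G v}^2\Big)^{1/2}\Big(\sumT h_T^{-1}\normPT{\psi_0-\psi_b}^2\Big)^{1/2}.
\end{align*}
The second factor is $\lesssim\norm{\psi}_{1,h}\lesssim\trb\psi$ by the definition of $\norm\cdot_{1,h}$ together with Lemma~\ref{lemma-norm-equ}. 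For the first factor I would use the trace inequality \cite[Lemma A.3]{PossionMixedWG} to pass from $\normPT{\cdot}$ to $\normT{\cdot}+h_T\normT{\G(\cdot)}$, and then the approximation property of the $L^2$-projection $\dQ_h$ onto $[P_j(T)]^d$ applied to $\G v\in[H^m(\Omega)]^d$ — noting $m\le j+1$ so the polynomial degree is high enough — which gives $\normT{\dQ_h\G v-\G v}+h_T\normT{\G(\dQ_h\G v-\G v)}\lesssim h_T^m\norm{v}_{m+1,T}$. Summing over $T$ and combining yields $|l(v,\psi)|\lesssim h^m\norm{v}_{m+1}\trb\psi$.

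The main obstacle is bookkeeping the boundary terms carefully in the two error equations — in particular making sure the $\langle\psi_b,\cdot\rangle_{\p\Omega}$ pieces are handled correctly in each case (vanishing by $\psi_b|_{\p\Omega}=0$ in the Ritz case versus being matched against the Neumann data term in \eqref{Nh}), and confirming that the interior-edge contributions of $\langle\psi_b,(\dQ_h\G v-\G v)\cdot\bn\rangle$ cancel due to $\psi_b$ being single-valued while $\bn$ flips sign. Once the algebra of which boundary terms survive is pinned down, the estimate \eqref{l-est} is a routine trace-plus-projection argument with the sole subtlety being the degree condition $m\le j+1$ that guarantees the projection error estimate on $[P_j(T)]^d$ is of the stated order.
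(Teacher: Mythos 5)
Your overall strategy is the same as the paper's: derive the identity $(-\Delta v,\psi_0)_{\T_h}=(\GW v,\GW\psi)_{\T_h}+l(v,\psi)-\langle\G v\cdot\bn,\psi_b\rangle_{\p\T_h}$ by integration by parts, the $L^2$-projection property of $\dQ_h$ and the definition of $\GW$, then subtract \eqref{Rh} or \eqref{Nh}; and your proof of \eqref{l-est} (weighted Cauchy--Schwarz, trace inequality, projection estimate for $\dQ_h$ on $[P_j(T)]^d$, then $\norm{\psi}_{1,h}\lesssim\trb{\psi}$ via Lemma~\ref{lemma-norm-equ}) is exactly the paper's argument. The estimate part is fine as written.

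There is, however, one concretely wrong step in your bookkeeping of the boundary terms: you claim that $\langle\psi_b,(\dQ_h\G v-\G v)\cdot\bn\rangle_{\p\T_h}$ telescopes to zero on interior edges by single-valuedness of $\psi_b$. It does not. The element-wise projection $\dQ_h\G v$ is in general double-valued across an interior edge $e=\p T_1\cap\p T_2$, so $(\dQ_h\G v)|_{T_1}\cdot\bn_1+(\dQ_h\G v)|_{T_2}\cdot\bn_2\neq 0$ and the pairing with $\psi_b$ survives --- indeed it must survive, since it is precisely the $-\psi_b$ contribution inside $l(v,\psi)=\langle(\dQ_h\G v-\G v)\cdot\bn,\psi_0-\psi_b\rangle_{\p\T_h}$; if it vanished, $l$ would reduce to $\langle(\dQ_h\G v-\G v)\cdot\bn,\psi_0\rangle_{\p\T_h}$, contradicting the stated form. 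The term that actually telescopes is $\langle\G v\cdot\bn,\psi_b\rangle_{\p\T_h}$: since $v\in H^{m+1}(\Omega)$ with $m\geq 1$, the flux $\G v\cdot\bn$ is single-valued up to the sign flip of $\bn$, so the interior-edge contributions cancel and only $\langle\G v\cdot\bn,\psi_b\rangle_{\p\Omega}$ remains, which vanishes for $\psi\in V_h^0$ in the Ritz case and is cancelled by the Neumann data term in \eqref{Nh} in the other case. The correct splitting is $-\langle\dQ_h\G v\cdot\bn,\psi_b\rangle_{\p\T_h}=-\langle(\dQ_h\G v-\G v)\cdot\bn,\psi_b\rangle_{\p\T_h}-\langle\G v\cdot\bn,\psi_b\rangle_{\p\T_h}$, keeping the first piece inside $l$ and telescoping only the second. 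With that correction the derivation lands on the stated error equations.
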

\begin{proof}
    Using the integration by parts, the definitions of $\dQ_h$ and $\GW$, (\ref{gradEX}), we have
\begin{align}
  (-\Delta v,\psi _0)_{\T _h}=&~ (\G v,\G \psi _0)_{\T _h}-\langle \G v\cdot\bn,\psi _0\rangle _{\partial\T _h}\nonumber \\
  =&~(\dQ _h\G v,\G\psi _0)_{\T _h}-\langle\G v\cdot\bn,\psi _0\rangle _{\partial\T _h}\nonumber \\
  =&~-(\G\cdot \dQ _h\G v,\psi _0)_{\T _h}+\langle (\dQ _h \G v-\G v)\cdot\bn,\psi _0\rangle _{\partial\T _h}\nonumber \\
  =&~(\dQ _h\G v,\GW \psi)-\langle\dQ _h\G v\cdot\bn,\psi _b\rangle _{\partial\T _h}+ \langle (\dQ _h \G v-\G v)\cdot\bn,\psi _0\rangle _{\partial\T _h}\nonumber \\
  =&~(\GW v,\GW\psi)_{\T _h}-\langle (\dQ _h\G v-\G v)\cdot\bn,\psi _b\rangle_{\partial\T _h}-\langle\G v\cdot\bn,\psi _b\rangle_{\partial\T _h}\nonumber \\
  &~+\langle (\dQ _h \G v-\G v)\cdot\bn,\psi _0\rangle _{\partial\T _h}\nonumber \\
  =&~(\GW v,\GW\psi)_{\T _h}+\langle (\dQ _h\G v-\G v)\cdot\bn,\psi _0 -\psi _b\rangle _{\partial\T _h}-\langle \G v\cdot\bn,\psi _b\rangle _{\partial\T _h}\label{poisson-err}.
\end{align}
By the above equation and the fact that $\psi |_{\partial\Omega}=0$ and (\ref{Rh}), we arrive at (\ref{Rh_err_equ}). Using (\ref{Nh}), we get (\ref{Nh_err_equ}). 

For (\ref{l-est}), we use the Cauchy-Schwarz inequality, the trace inequality, the projection inequality, the definition of $\norm{\cdot}_{1,h}$, (\ref{norm-equ}), and get
\begin{align*}
  \big|l(v,\psi)\big|=&~\Big |\langle(\dQ _h\G v-\G v)\cdot\bn,\psi _0 -\psi _b\rangle_{\partial\T _h}\Big |\\
  \lesssim&~\left( \sumT h_T\norm{\dQ _h\G v-\G v}_{\partial T}^2\right)^\frac{1}{2}\left(\sumT h_T^{-1}\norm{\psi _0-\psi _b}_{\partial T}^2\right)^{\frac{1}{2}}\\
  \lesssim&~h^m\norm{\G v}_m \norm{\psi}_{1,h}\\
  \lesssim&~h^m\norm{v}_{m+1}\trb\psi.
\end{align*}
Then we completes the proof.
\end{proof}

\begin{lemma}\label{Qhv_err}
  For any $v\in H^{m+1}(\Omega),(0\leq m\leq k)$, there holds
  \begin{align}
    \label{trbv_Qhv}\trb {v-Q_hv}\lesssim h^m\norm{v}_{m+1}.
  \end{align}
\end{lemma}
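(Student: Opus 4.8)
The plan is to unwind the definition of the triple-bar norm and reduce everything to the commuting-projection identity $\GW(Q_h v) = \dQ_h(\nabla(Q_0 v))$ together with standard $L^2$-projection approximation bounds. First I would observe that, since $\GW$ acts on the weak function $v - Q_h v = \{v - Q_0 v,\, v - Q_b v\}$, I need an expression for $\GW(v - Q_h v)$ on each element $T$. For the genuine $H^1$ function $v$ itself, Lemma on (\ref{gradEX}) gives $\GW v = \dQ_h(\nabla v)$; for the discrete part I would compute $\GW(Q_h v)$ directly from the definition (\ref{def-wgradient}), using that $Q_0$ is the $L^2$ projection onto $P_k(T)$ and $Q_b$ onto $P_k(e)$. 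The key step is to show that $\GW(Q_h v)$ is a good approximation of $\nabla v$ in $[P_j(T)]^d$, i.e. to bound $\|\GW(Q_h v) - \dQ_h(\nabla v)\|_T$ — or, more directly, to bound $\|\GW(v - Q_h v)\|_T$.

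Concretely, for $\bq \in [P_j(T)]^d$, from (\ref{def-wgradient}) and integration by parts,
\begin{align*}
(\GW(v - Q_h v), \bq)_T &= -(v - Q_0 v, \nabla\cdot\bq)_T + \langle v - Q_b v, \bq\cdot\bn\rangle_{\partial T}.
\end{align*}
Since $\nabla\cdot\bq \in P_{j-1}(T)$ and $Q_0$ projects onto $P_k(T)$, the volume term vanishes provided $j - 1 \le k$, which is not generally the case; instead I would keep it and estimate $|(v - Q_0 v, \nabla\cdot\bq)_T| \le \|v - Q_0 v\|_T \|\nabla\cdot\bq\|_T \lesssim \|v - Q_0 v\|_T h_T^{-1}\|\bq\|_T$ by the inverse inequality \cite[Lemma A.6]{PossionMixedWG}, and for the boundary term write $v - Q_b v = (v - Q_b v)$ on $e$, use the trace inequality and projection estimates \cite[Lemma A.3, Lemma 4.1]{PossionMixedWG} to get $\langle v - Q_b v, \bq\cdot\bn\rangle_{\partial T} \lesssim (\sum_e h_e^{-1}\|v - Q_b v\|_e^2)^{1/2}(\sum_e h_e \|\bq\|_e^2)^{1/2} \lesssim \|v - Q_b v\|_{\partial T}\, \|\bq\|_T h_T^{-1/2} h_T^{1/2}$, again via the inverse inequality on $\bq$. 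Choosing $\bq = \GW(v - Q_h v)$ and dividing, I obtain $\|\GW(v - Q_h v)\|_T \lesssim h_T^{-1}\|v - Q_0 v\|_T + h_T^{-1/2}\|v - Q_b v\|_{\partial T}$, and note $v - Q_b v = v - Q_b v$ on $e$ can be compared to $v - Q_0 v$ up to the jump $Q_0 v - Q_b v$; more simply, bound each piece by the projection error estimates $\|v - Q_0 v\|_T \lesssim h_T^{m+1}\|v\|_{m+1,T}$ and $\|v - Q_0 v\|_{\partial T} \lesssim h_T^{m+1/2}\|v\|_{m+1,T}$ (and likewise the edge projection $Q_b$), so that each term on the right is $\lesssim h_T^m \|v\|_{m+1,T}$.

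Summing the squares over $T \in \T_h$ then yields $\trb{v - Q_h v}^2 = \sum_T \|\GW(v - Q_h v)\|_T^2 \lesssim h^{2m}\|v\|_{m+1}^2$, which is the claim. I expect the main obstacle to be the bookkeeping around the degree $j = n + k - 1$ of the weak-gradient range space: because $j$ can exceed $k$, the naive orthogonality argument fails and one must route the volume term through the inverse inequality, paying an $h_T^{-1}$ that is then absorbed by the extra power in $\|v - Q_0 v\|_T \lesssim h_T^{m+1}\|v\|_{m+1,T}$. Care is also needed to make the edge term's constants uniform under the shape-regularity assumptions of \cite{PossionMixedWG}, but this is exactly the setting in which the cited trace, inverse, and projection lemmas apply.
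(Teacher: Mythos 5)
Your proposal is correct and follows the same overall framework as the paper: test $\GW(v-Q_hv)$ against $\bq\in[P_j(T)]^d$ via the definition (\ref{def-wgradient}), estimate the right-hand side by $h^m\norm{v}_{m+1}\norm{\bq}$, and conclude by taking $\bq=\GW(v-Q_hv)$. The one place you genuinely diverge is the volume term: the paper integrates by parts once more, rewriting $-(v-Q_0v,\G\cdot\bq)_T+\langle v-Q_bv,\bq\cdot\bn\rangle_{\partial T}$ as $(\G(v-Q_0v),\bq)_T-\langle (v-Q_0v)-(v-Q_bv),\bq\cdot\bn\rangle_{\partial T}$, so that no inverse inequality is applied to $\G\cdot\bq$ and the loss of one derivative is absorbed by $\norm{\G(v-Q_0v)}_T\lesssim h^m\norm{v}_{m+1,T}$; you instead keep $-(v-Q_0v,\G\cdot\bq)_T$ and pay an $h_T^{-1}$ through the inverse inequality, which is then absorbed by the extra power in $\norm{v-Q_0v}_T\lesssim h_T^{m+1}\norm{v}_{m+1,T}$. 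Both routes deliver the same rate for all $0\le m\le k$, and your correct observation that the naive orthogonality argument fails because $j=n+k-1>k$ is exactly why some such device is needed. The only step you should make explicit is the bound on the edge term: use that $Q_0v|_e\in P_k(e)$ and $Q_b$ is the $L^2(e)$-best approximation in $P_k(e)$, so $\norm{v-Q_bv}_e\le\norm{v-Q_0v}_e$, and then apply the trace inequality to $v-Q_0v$ to get $h_T^{-1/2}\norm{v-Q_bv}_{\partial T}\lesssim h_T^m\norm{v}_{m+1,T}$ — this is precisely how the paper disposes of the $Q_b$ contribution as well.
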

\begin{proof}
  We use the definition of the weak gradient, the integration by parts, the Cauchy-Schwarz inequality, the trace inequality, the inverse inequality, the definition of the projection operator, and the projection inequality to obtain for any $\bq \in [P_j(T)]^d$,
  \begin{align*}
    &~(\GW (v-Q_h v),\bq)_T \\
    &~= -(v-Q_0 v,\G\cdot\bq)_T+\langle v-Q_b v,\bq\cdot\bn\rangle _{\partial T}\\
    &~= (\G (v-Q_0 v),\bq)_T -\langle v-Q_0 v-(v-Q_b v),\bq\cdot\bn\rangle_{\partial T}\\
    &~\lesssim \norm{\G (v-Q_0 v)}\norm{\bq}+\left(\sumT h_T^{-1}\norm{Q_0v-Q_bv}_{\partial T}^2\right)^\frac{1}{2}\left(\sumT h_T\norm{\bq\cdot\bn}_{\partial T}^2\right)^\frac{1}{2}\\
    &~\lesssim \norm{\G (v-Q_0 v)}\norm{\bq}+\left(\sumT h_T^{-1}(\norm{v-Q_0v}_{\partial T}^2+\norm{v-Q_bv}_{\partial T}^2)\right)^\frac{1}{2}\norm{\bq}\\
    &~\lesssim \norm{\G (v-Q_0 v)}\norm{\bq}+\left(\sumT h_T^{-1}\norm{v-Q_0v}_{\partial T}^2\right)^\frac{1}{2}\norm{\bq}\\
    &~\lesssim \norm{\G (v-Q_0 v)}\norm{\bq}+\left(\sumT h_T^{-2}\norm{v-Q_0v}_{T}^2+\normT{\G (v-Q_0 v)}^2\right)^\frac{1}{2}\norm{\bq}\\
    &~\lesssim h^m\norm{v}_{m+1}\norm{\bq}.
  \end{align*}
  Let $\bq =\GW (v-Q_h v)$, then we get (\ref{trbv_Qhv}).
\end{proof}

\begin{lemma}\label{trb_err}
  For $v\in H^1_0(\Omega)\bigcap H^{m+1}(\Omega)$ or $\overline H ^1(\Omega)\bigcap H^{m+1}(\Omega)$, where $1\leq m\leq k$, we have
  \begin{align}
    \label{Rh_trb}\trb{v-\Pi_h^Rv}\lesssim&~ h^m\norm v _{m+1},\\
    \label{Nh_trb}\trb{v-\Pi_h^Nv}\lesssim&~ h^m\norm v _{m+1}.
  \end{align}
\end{lemma}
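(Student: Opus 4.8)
The plan is to prove both bounds in the same way, by inserting the $L^2$ projection $Q_hv$ and invoking the error equations of Lemma~\ref{RhNh-err-equ} together with the approximation estimate of Lemma~\ref{Qhv_err}. For (\ref{Rh_trb}), first I would write the splitting $\Pi_h^Rv-v=(\Pi_h^Rv-Q_hv)+(Q_hv-v)$. The term $\trb{Q_hv-v}$ is immediately controlled by Lemma~\ref{Qhv_err}, so it remains to estimate $\trb{\Pi_h^Rv-Q_hv}$. Since $v\in H^1_0(\Omega)$ we have $Q_bv|_e=0$ on every boundary edge $e\subset\partial\Omega$, hence $Q_hv\in V_h^0$ and therefore $\Pi_h^Rv-Q_hv\in V_h^0$, a space on which $\trb\cdot$ is a genuine norm (by Lemma~\ref{lemma-norm-equ} and the subsequent remark).

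Next I would test with $\psi=\Pi_h^Rv-Q_hv\in V_h^0$ and compute, using the error equation (\ref{Rh_err_equ}),
\begin{align*}
\trb{\Pi_h^Rv-Q_hv}^2&=(\GW (\Pi_h^Rv-Q_hv),\GW \psi)_{\T_h}\\
&=(\GW (\Pi_h^Rv-v),\GW \psi)_{\T_h}+(\GW (v-Q_hv),\GW \psi)_{\T_h}\\
&=l(v,\psi)+(\GW (v-Q_hv),\GW \psi)_{\T_h}.
\end{align*}
For the first term I apply the estimate (\ref{l-est}), noting that $1\le m\le k\le j+1$ so its hypothesis is met; for the second term I use the Cauchy--Schwarz inequality in the $\trb\cdot$ form together with Lemma~\ref{Qhv_err}. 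Both terms are then bounded by $h^m\norm{v}_{m+1}\trb\psi$, so dividing by $\trb\psi$ (permissible once $\trb\psi\neq0$, the trivial case being immediate) gives $\trb{\Pi_h^Rv-Q_hv}\lesssim h^m\norm{v}_{m+1}$. The triangle inequality then yields (\ref{Rh_trb}).

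For (\ref{Nh_trb}) I would repeat the argument verbatim with $v\in\overline H^1(\Omega)$: now $\int_\Omega Q_0v\,dx=\int_\Omega v\,dx=0$, so $Q_hv\in\overline V_h$ and $\Pi_h^Nv-Q_hv\in\overline V_h\subset V_h$, which makes it a legitimate test function in (\ref{Nh_err_equ}) and a space on which $\trb\cdot$ is a norm. The same chain of inequalities, using (\ref{Nh_err_equ}), (\ref{l-est}) and Lemma~\ref{Qhv_err}, closes the estimate, and the triangle inequality gives (\ref{Nh_trb}).

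The step that needs the most care is not any single computation but the bookkeeping of subspaces: one must check precisely that $Q_hv\in V_h^0$ in the Dirichlet case and $Q_hv\in\overline V_h$ in the Neumann case, so that (i) $\trb\cdot$ genuinely acts as a norm on the relevant difference and (ii) $\psi=\Pi_h^R v-Q_hv$ (resp.\ $\Pi_h^N v-Q_hv$) is an admissible test function in the corresponding error equation. Once this is in place, everything else is a direct assembly of the already-established estimates (\ref{l-est}) and (\ref{trbv_Qhv}).
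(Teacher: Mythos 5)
Your proof is correct and takes essentially the same route as the paper: both arguments insert the projection $Q_hv$ and combine the error equation (\ref{Rh_err_equ}) (resp.\ (\ref{Nh_err_equ})) with the estimates (\ref{l-est}) and (\ref{trbv_Qhv}). The only cosmetic difference is that you bound the discrete part $\trb{\Pi_h^Rv-Q_hv}$ and conclude by the triangle inequality, whereas the paper estimates $\trb{v-\Pi_h^Rv}^2$ directly and absorbs the resulting term via Young's inequality.
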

\begin{proof}
  For (\ref{Rh_trb}), by using (\ref{Rh_err_equ}), the estimate of $l(v,\psi)$ and the Young's inequality, we get
  \begin{align*}
    \trb{v-\Pi_h^Rv}^2=&~(\GW (v-\Pi_h^Rv),\GW (v-Q_hv))_{\T _h}+(\GW (v-\Pi_h^Rv),\GW (Q_hv-\Pi_h^Rv))_{\T _h}\\
    \lesssim&~\trb{v-Q_hv}\trb{v-\Pi_h^Rv}+\big|l(v,Q_hv-\Pi_h^Rv)\big|\\
    \lesssim&~\trb{v-Q_hv}\trb{v-\Pi_h^Rv}+h^m\norm v _{m+1}\trb{Q_hv-\Pi_h^Rv}\\
    \lesssim&~\trb{v-Q_hv}\trb{v-\Pi_h^Rv}+h^m\norm v _{m+1}(\trb{Q_hv-v}+\trb{v-\Pi_h^Rv})\\
    \leq&~ Ch^{2m}\norm v _{m+1}^2+\frac{1}{2}\trb{v-\Pi_h^Rv}^2.
  \end{align*}
  Thus, we have (\ref{Rh_trb}). Analogously, we use (\ref{Nh_err_equ}) and same inequalities to verify (\ref{Nh_trb}).
\end{proof}

  \begin{corollary}
      For $v\in H^1_0(\Omega)\bigcap H^{m+1}(\Omega)$ or $\overline H ^1(\Omega)\bigcap H^{m+1}(\Omega)$, where $1\leq m\leq k$, there hold
    \begin{align}
      \label{Rh_1h}\norm{v-\Pi_h^Rv}_{1,h}\lesssim&~ h^m\norm v _{m+1},\\
    \label{Nh_1h}\norm{v-\Pi_h^Nv}_{1,h}\lesssim&~ h^m\norm v _{m+1}.
    \end{align}
  \end{corollary}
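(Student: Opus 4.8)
The plan is to derive the $\norm{\cdot}_{1,h}$ estimates directly from the corresponding $\trb{\cdot}$ estimates in Lemma~\ref{trb_err}, using the norm equivalence of Lemma~\ref{lemma-norm-equ}. The key observation is that $\Pi_h^R v \in V_h^0 \subset V_h$ and $\Pi_h^N v \in \overline{V}_h \subset V_h$, so both $v - \Pi_h^R v$ and $v - \Pi_h^N v$ lie in $V_h + H^1(\Omega)$, which is precisely the space on which the seminorm $\norm{\cdot}_{1,h}$ from Definition~\ref{norm} is defined and on which the equivalence $C_1 \norm{w}_{1,h} \le \trb{w} \le C_2 \norm{w}_{1,h}$ of Lemma~\ref{lemma-norm-equ} applies.

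First I would fix $v \in H^1_0(\Omega)\cap H^{m+1}(\Omega)$ (the case $v \in \overline{H}^1(\Omega)\cap H^{m+1}(\Omega)$ is identical), set $w = v - \Pi_h^R v$, and write
\begin{align*}
\norm{v - \Pi_h^R v}_{1,h} = \norm{w}_{1,h} \le \frac{1}{C_1} \trb{w} = \frac{1}{C_1}\trb{v - \Pi_h^R v} \lesssim h^m \norm{v}_{m+1},
\end{align*}
where the last inequality is exactly (\ref{Rh_trb}) from Lemma~\ref{trb_err}. This gives (\ref{Rh_1h}). Then I would repeat the argument verbatim with $\Pi_h^R$ replaced by $\Pi_h^N$, invoking (\ref{Nh_trb}) in place of (\ref{Rh_trb}), to obtain (\ref{Nh_1h}). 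One small point to check is that Lemma~\ref{lemma-norm-equ} as stated applies to $v \in V_h$, so I would note that its proof (and the underlying inverse/trace inequalities) extends to $V_h + H^1(\Omega)$, or alternatively apply the equivalence after observing that $w_0$ and $w_b$ individually inherit the needed polynomial/regularity structure; in any case the equivalence is the standard tool used throughout the paper on exactly this kind of difference.

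There is essentially no main obstacle here: the statement is a routine corollary, and the only thing to be careful about is making sure the norm equivalence is being applied to an admissible argument and that the direction of the inequality in (\ref{norm-equ}) used is $\norm{\cdot}_{1,h} \lesssim \trb{\cdot}$ (the lower bound $C_1 \norm{v}_{1,h} \le \trb{v}$), not the reverse. Everything else is a one-line chain of the already-established estimates.
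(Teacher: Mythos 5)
Your reduction to Lemma \ref{trb_err} via the norm equivalence is the right instinct, but the step you flag as a ``small point to check'' is in fact the gap, and it cannot be patched the way you suggest. The lower bound $C_1\norm{w}_{1,h}\le \trb{w}$ in Lemma \ref{lemma-norm-equ} is a genuinely discrete (polynomial) phenomenon: its proof constructs a special test function $\bq\in[P_j(T)]^d$ adapted to the polynomial components $w_0\in P_k(T)$, $w_b\in P_k(e)$, and it does not extend to $V_h+H^1(\Omega)$. Indeed, for $v\in H^1(\Omega)$ one has $\GW v=\dQ_h(\G v)$ by (\ref{gradEX}), so $\trb{v}=\norm{\dQ_h\G v}_{\T_h}$ can vanish (take $\G v$ elementwise $L^2$-orthogonal to $[P_j(T)]^d$) while $\norm{v}_{1,h}=\norm{\G v}$ does not; only the upper bound $\trb{\cdot}\lesssim\norm{\cdot}_{1,h}$ survives on $V_h+H^1(\Omega)$, and that is the wrong direction here. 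Your alternative escape route also fails: $w_0=v-\Pi_0^Rv$ does \emph{not} ``inherit the needed polynomial structure,'' since $v$ itself is not piecewise polynomial. So the one-line chain $\norm{v-\Pi_h^Rv}_{1,h}\lesssim\trb{v-\Pi_h^Rv}\lesssim h^m\norm{v}_{m+1}$ rests on an inequality that is not available.

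The repair is exactly the paper's argument: insert the $L^2$ projection and split $v-\Pi_h^Rv=(v-Q_hv)+(Q_hv-\Pi_h^Rv)$. The second piece lies in $V_h$, so Lemma \ref{lemma-norm-equ} legitimately gives $\norm{Q_hv-\Pi_h^Rv}_{1,h}\lesssim\trb{Q_hv-\Pi_h^Rv}\le\trb{Q_hv-v}+\trb{v-\Pi_h^Rv}\lesssim h^m\norm{v}_{m+1}$ by Lemma \ref{Qhv_err} and (\ref{Rh_trb}). The first piece is \emph{not} estimated through $\trb{\cdot}$ at all; instead $\norm{v-Q_hv}_{1,h}$ is bounded directly from its definition using the trace and projection inequalities (the edge term collapses because $\norm{v-Q_bv}_e\le\norm{v-Q_0v}_e$ by the definition of $Q_b$). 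The same decomposition with (\ref{Nh_trb}) gives (\ref{Nh_1h}). Without some such splitting, your argument does not go through.
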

  \begin{proof}
    Using Lemma \ref{lemma-norm-equ}, the definitions of $\norm{\cdot}_{1,h}$ and $Q_b$, the trace inequality, the projection inequality, Lemma \ref{Qhv_err} and Lemma \ref{trb_err}, we have
    \begin{align*}
      &~\norm{v-\Pi_h^Rv}_{1,h}\\
      \lesssim&~\norm{v-Q_hv}_{1,h}+\norm{Q_hv-\Pi_h^Rv}_{1,h}\\
      \lesssim&~\left(\sumT\normT{\G (v-Q_0v)}^2+h^{-1}\norm{v-Q_0v-(v-Q_bv)}_{\partial T}^2\right)^\frac{1}{2}+\trb{Q_hv-\Pi_h^Rv}\\
      \lesssim&~\left(\sumT\normT{\G (v-Q_0v)}^2+h^{-1}\norm{v-Q_0v-(v-Q_bv)}_{\partial T}^2\right)^\frac{1}{2}+\trb{Q_hv-v}+\trb{v-\Pi_h^Rv}\\
      \lesssim&~\left(\sumT\normT{\G (v-Q_0v)}^2+h^{-1}\norm{v-Q_0v}_{\partial T}^2\right)^\frac{1}{2}+h^m\norm v _{m+1}\\
      \lesssim&~h^m\norm v _{m+1},
    \end{align*}
    which completes the proof of (\ref{Rh_1h}). The (\ref{Nh_1h}) can be verified in the similar way.
  \end{proof}

  \begin{lemma}\label{L2_err}
    For $v\in H^1_0(\Omega)\bigcap H^{m+1}(\Omega)$ or $\overline H ^1(\Omega)\bigcap H^{m+1}(\Omega)$, where $1\leq m\leq k$, we have
    \begin{align}
      \label{QhRh_L2}\norm{Q_0v-\Pi_0^Rv}\lesssim&~ h^{m+1}\norm{v}_{m+1}\\
      \label{QhNh_L2}\norm{Q_0v-\Pi_0^Nv}\lesssim&~ h^{m+1}\norm{v}_{m+1}.
    \end{align}
    Further, we arrive at
    \begin{align}
      \label{Rh_L2}\norm{v-\Pi_0^Rv}\lesssim&~ h^{m+1}\norm{v}_{m+1}\\
      \label{Nh_L2}\norm{v-\Pi_0^Nv}\lesssim&~ h^{m+1}\norm{v}_{m+1}.
    \end{align}
  \end{lemma}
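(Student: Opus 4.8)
The plan is to obtain (\ref{QhRh_L2}) and (\ref{QhNh_L2}) by the Aubin--Nitsche duality argument; (\ref{Rh_L2}) and (\ref{Nh_L2}) then follow at once from the triangle inequality and the standard $L^2$-projection bound $\norm{v-Q_0v}\lesssim h^{m+1}\norm{v}_{m+1}$ (legitimate since $1\le m\le k$). I carry out the Ritz case, the Neumann case being parallel. Put $e=\{e_0,e_b\}:=Q_hv-\Pi_h^Rv$, which lies in $V_h^0$ because $v|_{\partial\Omega}=0$ forces $Q_bv|_{\partial\Omega}=0$. Writing $e=(Q_hv-v)-(\Pi_h^Rv-v)$ and using the error equation (\ref{Rh_err_equ}), one gets for every $\psi\in V_h^0$
\begin{align*}
(\GW e,\GW\psi)_{\T_h}=(\GW (Q_hv-v),\GW\psi)_{\T_h}-l(v,\psi).
\end{align*}
Next, introduce the dual problem $-\Delta\Phi=e_0$ in $\Omega$, $\Phi=0$ on $\partial\Omega$; by the assumed $H^2$-regularity, $\norm{\Phi}_2\lesssim\norm{e_0}$. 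Testing (\ref{Rh}), written for $\Pi_h^R\Phi$, with $\psi=e$ gives $(\GW\Pi_h^R\Phi,\GW e)_{\T_h}=(-\Delta\Phi,e_0)_{\T_h}=\norm{e_0}^2$, while taking $\psi=\Pi_h^R\Phi$ in the displayed identity gives $(\GW e,\GW\Pi_h^R\Phi)_{\T_h}=(\GW (Q_hv-v),\GW\Pi_h^R\Phi)_{\T_h}-l(v,\Pi_h^R\Phi)$. Equating these yields the key identity
\begin{align*}
\norm{e_0}^2=(\GW (Q_hv-v),\GW\Pi_h^R\Phi)_{\T_h}-l(v,\Pi_h^R\Phi),
\end{align*}
and it remains to bound both terms on the right by $h^{m+1}\norm{v}_{m+1}\norm{\Phi}_2$.

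For the term $l(v,\Pi_h^R\Phi)$, since $\Phi$ is single valued I may replace $\Pi_0^R\Phi-\Pi_b^R\Phi$ by $(\Pi_0^R\Phi-\Phi)-(\Pi_b^R\Phi-\Phi)$ inside the definition of $l$; then Cauchy--Schwarz together with the trace and projection inequalities and the bound (\ref{Rh_1h}) (with $m$ replaced by $1$) give $|l(v,\Pi_h^R\Phi)|\lesssim h^m\norm{v}_{m+1}\cdot h\norm{\Phi}_2$. For the first term I split $\GW\Pi_h^R\Phi=\GW (\Pi_h^R\Phi-\Phi)+\GW\Phi$. The part with $\GW (\Pi_h^R\Phi-\Phi)$ is bounded by $\trb{Q_hv-v}\,\trb{\Pi_h^R\Phi-\Phi}\lesssim h^m\norm{v}_{m+1}\cdot h\norm{\Phi}_2$ using Lemma~\ref{Qhv_err} and (\ref{Rh_trb}). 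Everything therefore reduces to the one genuinely delicate quantity $(\GW (Q_hv-v),\GW\Phi)_{\T_h}$.

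To handle it I would use $\GW\Phi=\dQ_h(\G\Phi)$ from (\ref{gradEX}) and the local identity $(\GW (v-Q_hv),\bq)_T=(\G (v-Q_0v),\bq)_T-\langle Q_bv-Q_0v,\bq\cdot\bn\rangle_{\partial T}$ for $\bq\in[P_j(T)]^d$ that appears in the proof of Lemma~\ref{Qhv_err}, taken with $\bq=\dQ_h(\G\Phi)$. Adding and subtracting $\G\Phi$ and integrating by parts on each $T$ produces a term $(v-Q_0v,\Delta\Phi)_T=-(v-Q_0v,e_0)_T$, which vanishes because $e_0=Q_0v-\Pi_0^Rv\in P_k(T)$ while $(v-Q_0v,p)_T=0$ for all $p\in P_k(T)$. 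What remains is a harmless volume remainder $(\G (v-Q_0v),\dQ_h(\G\Phi)-\G\Phi)_{\T_h}$, at most $h^m\norm{v}_{m+1}\cdot h\norm{\Phi}_2$ by the projection inequality, together with element-boundary terms of the types $\langle v-Q_0v,\G\Phi\cdot\bn\rangle_{\partial T}$, $\langle Q_bv-Q_0v,\dQ_h(\G\Phi)\cdot\bn\rangle_{\partial T}$ and $\langle Q_0v-v,(\dQ_h(\G\Phi)-\G\Phi)\cdot\bn\rangle_{\partial T}$.

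The main obstacle is the careful accounting of those element-boundary terms. Reorganizing them edge by edge, the contributions sitting on $\partial\Omega$ cancel because $v|_{\partial\Omega}=0$ (in the Neumann case, because $\pa{\Phi}{\bn}=0$ on $\partial\Omega$), while the interior-edge residue collapses to sums of the type $\sum_{e\in\E_h}\langle Q_bv-v,[\dQ_h(\G\Phi)]\rangle_e$ (the bracket being the jump across the edge) and $\sum_{T\in\T_h}\langle Q_0v-v,(\dQ_h(\G\Phi)-\G\Phi)\cdot\bn\rangle_{\partial T}$; each of these is controlled by Cauchy--Schwarz on the mesh skeleton, the trace inequality, the projection estimate $\big(\sum_{T\in\T_h} h_T^{-1}\norm{v-Q_0v}_{\partial T}^2\big)^{1/2}\lesssim h^m\norm{v}_{m+1}$, and the bound $\sum_{T\in\T_h} h_T\norm{\dQ_h(\G\Phi)-\G\Phi}_{\partial T}^2\lesssim h^2\norm{\Phi}_2^2$ (for the last one, $\norm{\G(\dQ_h(\G\Phi)-\G\Phi)}_T$ is handled by comparing $\dQ_h(\G\Phi)$ with a first-order approximation of $\G\Phi$ and invoking the inverse inequality). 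Since all the pieces are $O(h^{m+1}\norm{v}_{m+1}\norm{\Phi}_2)$, the key identity gives $\norm{e_0}^2\lesssim h^{m+1}\norm{v}_{m+1}\norm{\Phi}_2\lesssim h^{m+1}\norm{v}_{m+1}\norm{e_0}$, which is (\ref{QhRh_L2}). For (\ref{QhNh_L2}) the same scheme applies with $\Pi_h^N$ replacing $\Pi_h^R$: one uses the Neumann dual problem, solvable since $\int_\Omega e_0=\int_\Omega(Q_0v-\Pi_0^Nv)=0$, the error equation (\ref{Nh_err_equ}) which holds for all $\psi\in V_h$, and the estimates (\ref{Nh_trb}) and (\ref{Nh_1h}); the boundary bookkeeping is the same except that the $\partial\Omega$ terms are annihilated by the homogeneous Neumann condition on $\Phi$ instead of by a boundary condition on $v$.
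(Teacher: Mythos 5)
Your proof is correct and follows essentially the same Aubin--Nitsche duality argument as the paper: the same dual problems with the same $H^2$ regularity, the error equations (\ref{Rh_err_equ})--(\ref{Nh_err_equ}), the $\trb{\cdot}$ and $\norm{\cdot}_{1,h}$ projection estimates, the bound on $l(v,\Pi_h^R\Phi)$ via insertion of $\Phi$, and the same delicate cross term $(\GW\Phi,\GW(Q_hv-v))_{\T_h}$ handled by integration by parts. The only differences are organizational: you enter the duality through the definition of $\Pi_h^R\Phi$ (resp.\ $\Pi_h^N\Phi$) tested against $e$ rather than through the consistency identity (\ref{poisson-err}), you kill $(v-Q_0v,\Delta\Phi)_T$ by $L^2$-orthogonality where the paper merely bounds it by Cauchy--Schwarz, and in the cross term you explicitly track the $\dQ_h(\G\Phi)-\G\Phi$ remainders where the paper applies the weak-gradient identity directly with the non-polynomial test function $\G\phi$ --- your accounting is in fact the more careful of the two and leads to the same bound.
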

  \begin{proof}
    First, we use the standard duality argument to prove (\ref{QhNh_L2}) and (\ref{Nh_L2}) in detail. Find $\phi\in\overline H^1(\Omega)$ such that
    \begin{align}
      \label{dual-equ1}-\Delta \phi =&~ Q_0v-\Pi_0^Nv,\quad in ~\Omega,\\
      \label{dual-equ2}\pa{\phi}{\bn} =&~ 0,\quad \qquad \qquad\,\,\, on ~\partial \Omega,
    \end{align}
    and the above dual problem has the regularity assumption 
    $$\norm{\phi}_{2}\lesssim \norm{Q_0v-\Pi_0^Nv}.$$
    By (\ref{poisson-err}), (\ref{dual-equ2}), the Cauchy-Schwarz inequality, the projection inequality, (\ref{Nh_trb}), (\ref{Nh_err_equ}), and (\ref{l-est}), we get
    \begin{align*}
      &\norm{Q_0v-\Pi_0^Nv}^2\\
      =&~(-\Delta\phi,Q_0v-\Pi_0^Nv)_{\T_h}\\
      =&~(\GW\phi,\GW (Q_hv-\Pi_h^Nv))_{\T_h}+l(\phi,Q_hv-\Pi_h^Nv)\\
      =&~(\GW (\phi-\Pi_h^N\phi),\GW (Q_hv-\Pi_h^Nv))_{\T_h}+(\GW \Pi_h^N\phi,\GW (Q_hv-\Pi_h^Nv))_{\T_h}\\
      &~+l(\phi,Q_hv-\Pi_h^Nv)\\
      \lesssim&~ \trb{\phi - \Pi_h^N\phi}(\trb{Q_hv-v}+\trb{v-\Pi_h^Nv})+(\GW \Pi_h^N\phi,\GW (Q_hv-v))_{\T_h}\\
      &~+(\GW \Pi_h^N\phi,\GW (v-\Pi_h^Nv))_{\T_h}+l(\phi,Q_hv-\Pi_h^Nv)\\
      \lesssim&~h\norm{\phi}_{2}h^m\norm{v}_{m+1}+(\GW (\Pi_h^N\phi-\phi),\GW (Q_hv-v))_{\T_h}+(\GW \phi,\GW (Q_hv-v))_{\T_h}\\
      &~-l(v,\Pi_h^N\phi)+l(\phi,Q_hv-\Pi_h^Nv)\\
      \lesssim&~h^{m+1}\norm{v}_{m+1}\norm{\phi}_{2}+h\norm{\phi}_{2}h^m\norm{v}_{m+1}+(\GW \phi,\GW (Q_hv-v))_{\T_h}-l(v,\Pi_h^N\phi)\\
      &~+l(\phi,Q_hv-\Pi_h^Nv)\\
      \lesssim&~h^{m+1}\norm{v}_{m+1}\norm{\phi}_{2}+(\GW \phi,\GW (Q_hv-v))_{\T_h}-l(v,\Pi_h^N\phi)\\
      &~+h\norm{\phi}_{2}(\trb{Q_hv-v}+\trb{v-\Pi_h^Nv})\\
      \lesssim&~h^{m+1}\norm{v}_{m+1}\norm{\phi}_{2}+(\GW \phi,\GW (Q_hv-v))_{\T_h}-l(v,\Pi_h^N\phi).
    \end{align*}
    For $(\GW \phi,\GW (Q_hv-v))_{\T_h}$, by using (\ref{gradEX}), the definitions of the projection operator and the weak gradient $\GW$, (\ref{dual-equ1}), (\ref{dual-equ2}), the Cauchy-Schwarz inequality, and the projection inequality, we have
    \begin{align*}
      (\GW \phi,\GW (Q_hv-v))_{\T_h}=&~(\dQ_h\G \phi,\GW (Q_hv-v))_{\T_h}\\
      =&~(\G \phi,\GW (Q_hv-v))_{\T_h}\\
      =&~-(Q_0v-v,\G\cdot\G\phi)_{\T_h}+\langle Q_bv-v,\G\phi\cdot\bn\rangle_{\partial\T_h}\\
      =&~(Q_0v-v,Q_0v-\Pi_0^Nv)_{\T_h}\\
      =&~\norm{Q_0v-v}\norm{Q_0v-\Pi_0^Nv}\\
      \lesssim&~h^{m+1}\norm{v}_{m+1}\norm{Q_0v-\Pi_0^Nv}
    \end{align*}
    As to $l(v,\Pi_h^N\phi)$, we have
    \begin{align*}
      l(v,\Pi_h^N\phi)=&~\langle (\dQ_h\G v-\G v)\cdot\bn,\Pi_0^N\phi-\Pi_b^N\phi\rangle _{\partial\T_h}\\
      \leq&~\Big(\sumT h_T\norm{\dQ_h\G v-\G v}_{\partial T}^2\Big)^\frac{1}{2}\Big(\sumT h_T^{-1}\norm{\Pi_0^N\phi-\Pi_b^N\phi}_{\partial T}^2\Big)^\frac{1}{2}\\
      \lesssim&~\Big(\sumT \norm{\dQ_h\G v-\G v}_{T}^2+h_T^{2}\norm{\G(\dQ_h\G v-\G v)}_{T}^2\Big)^\frac{1}{2}\\
      &~\Big(\sumT h_T^{-1}\norm{\Pi_0^N\phi-\phi-(\Pi_b^N\phi-\phi)}_{\partial T}^2\Big)^\frac{1}{2}\\
      \lesssim&~h^m\norm v_{m+1}\norm{\Pi_h^N\phi-\phi}_{1,h}\\
      \lesssim&~h^{m+1}\norm{v}_{m+1}\norm{\phi}_{2},
    \end{align*}
    where we have utilized the Cauchy-Schwarz inequality, the trace inequality, the projection inequality and (\ref{Nh_1h}).
    Thus, we get
    \begin{align*}
      \norm{Q_0v-\Pi_0^Nv}^2\lesssim&~h^{m+1}\norm{v}_{m+1}\norm{\phi}_{2}+h^{m+1}\norm{v}_{m+1}\norm{Q_0v-\Pi_0^Nv}\\
      \lesssim&~h^{m+1}\norm{v}_{m+1}\norm{Q_0v-\Pi_0^Nv}+h^{m+1}\norm{v}_{m+1}\norm{Q_0v-\Pi_0^Nv}\\
      \lesssim&~h^{m+1}\norm{v}_{m+1}\norm{Q_0v-\Pi_0^Nv},
    \end{align*}
    which implies $$\norm{Q_0v-\Pi_0^Nv}\lesssim h^{m+1}\norm{v}_{m+1}.$$
    Furthermore, we have
    \begin{align*}
      \norm{v-\Pi_0^Nv}\leq&~\norm{v-Q_0v}+\norm{Q_0v-\Pi_0^Nv}\\
      \lesssim&~h^{m+1}\norm{v}_{m+1}+h^{m+1}\norm{v}_{m+1}\\
      \lesssim&~h^{m+1}\norm{v}_{m+1},
    \end{align*}
    which completes the proof of (\ref{QhNh_L2}) and (\ref{Nh_L2}).

    To verify the estimate (\ref{Rh_L2}), we consider the dual problem as follows
      \begin{align}
        \label{dual-equ11}-\Delta \phi =&~ Q_0v-\Pi_0^Rv,\quad in ~\Omega,\\
        \label{dual-equ22}\phi =&~ 0,\quad\qquad\qquad\,\, on ~\partial \Omega.
      \end{align}
    Using the above problem, $Q_hv-\Pi_h^Rv\in V_h^0$ and $v|_{\partial \Omega}=0$, we can obtain the same derivation process and results. The proof is completed.
  \end{proof}

  \begin{corollary}
      For $v\in H^1_0(\Omega)\bigcap H^{m+1}(\Omega)$ or $\overline H ^1(\Omega)\bigcap H^{m+1}(\Omega)$, where $1\leq m\leq k$, we have
    \begin{align}
      \label{QhRh_0h}\norm{Q_hv-\Pi_h^Rv}_{0,h}\lesssim&~h^{m+1}\norm{v}_{m+1},\\
      \label{QhNh_0h}\norm{Q_hv-\Pi_h^Nv}_{0,h}\lesssim&~h^{m+1}\norm{v}_{m+1},
    \end{align}
    Furthermore, we acquire
    \begin{align}
      \label{Rh_0h}\norm{v-\Pi_h^Rv}_{0,h}\lesssim&~h^{m+1}\norm{v}_{m+1},\\
      \label{Nh_0h}\norm{v-\Pi_h^Nv}_{0,h}\lesssim&~h^{m+1}\norm{v}_{m+1},
    \end{align}
  \end{corollary}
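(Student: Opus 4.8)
The plan is to obtain all four bounds by assembling already‑proven ingredients: the $L^2$‑type estimates of Lemma~\ref{L2_err}, the $\trb\cdot$‑type estimates of Lemmas~\ref{Qhv_err} and \ref{trb_err}, the definition of $\norm\cdot_{0,h}$ in Definition~\ref{norm}, the norm equivalence (\ref{norm-equ}), and the triangle inequality. I would carry out the argument for the Ritz projection; the Neumann case is word‑for‑word the same once $\Pi_h^R$, $V_h^0$, (\ref{QhRh_L2}), (\ref{Rh_trb}) are replaced by $\Pi_h^N$, $\overline V_h$, (\ref{QhNh_L2}), (\ref{Nh_trb}), after observing that $\int_\Omega Q_0 v=\int_\Omega v=0$ so that $Q_hv-\Pi_h^Nv\in\overline V_h$.

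First I would prove (\ref{QhRh_0h}). Put $w=Q_hv-\Pi_h^Rv$. Since $v\in H^1_0(\Omega)$ forces $Q_bv|_{\partial\Omega}=0$ and $\Pi_h^Rv\in V_h^0$, we have $w\in V_h^0\subset V_h$, so (\ref{norm-equ}) applies to $w$. By Definition~\ref{norm},
\[
\norm{w}_{0,h}^2=\norm{w_0}^2+\sumT h_T\norm{w_0-w_b}_{\partial T}^2 ,
\]
where $w_0=Q_0v-\Pi_0^Rv$ and $w_b=Q_bv-\Pi_b^Rv$. The first term equals $\norm{Q_0v-\Pi_0^Rv}^2\lesssim h^{2(m+1)}\norm v_{m+1}^2$ by (\ref{QhRh_L2}). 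For the boundary term, writing the weight as $h_T=h_T^{2}\cdot h_T^{-1}$ and using the definition of $\norm\cdot_{1,h}$,
\[
\sumT h_T\norm{w_0-w_b}_{\partial T}^2\le h^2\sumT h_T^{-1}\norm{w_0-w_b}_{\partial T}^2\le h^2\norm w_{1,h}^2 ,
\]
and then (\ref{norm-equ}), the triangle inequality, Lemma~\ref{Qhv_err}, and (\ref{Rh_trb}) give
\[
\norm w_{1,h}\lesssim\trb w\le\trb{Q_hv-v}+\trb{v-\Pi_h^Rv}\lesssim h^m\norm v_{m+1}.
\]
Combining the two terms yields $\norm{Q_hv-\Pi_h^Rv}_{0,h}\lesssim h^{m+1}\norm v_{m+1}$, which is (\ref{QhRh_0h}); (\ref{QhNh_0h}) follows identically using (\ref{QhNh_L2}) and (\ref{Nh_trb}).

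Finally, (\ref{Rh_0h}) and (\ref{Nh_0h}) follow from the triangle inequality
\[
\norm{v-\Pi_h^Rv}_{0,h}\le\norm{v-Q_hv}_{0,h}+\norm{Q_hv-\Pi_h^Rv}_{0,h},
\]
in which $\norm{v-Q_hv}_{0,h}\lesssim h^{m+1}\norm v_{m+1}$ by the projection bound for $\norm{v-Q_hv}_{0,h}$ established in Section~\ref{Section:well-posedness} (trace and projection inequalities, read off with Sobolev index $m+1$), and the second term was just estimated. I do not expect a genuine obstacle: the only point needing care is that the boundary contribution to $\norm\cdot_{0,h}$ carries the weight $h_T=h_T^{2}\cdot h_T^{-1}$, and it is precisely this extra $h^2$ that upgrades the $O(h^m)$ bound on $\trb{Q_hv-\Pi_h^Rv}$ to the $O(h^{m+1})$ bound required here; one must also verify that $w$ indeed lies in $V_h$ (resp.\ $\overline V_h$) so that (\ref{norm-equ}) is available.
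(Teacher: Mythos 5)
Your proposal is correct and follows essentially the same route as the paper: the same splitting of $\norm{Q_hv-\Pi_h^Rv}_{0,h}^2$ into the $L^2$ part handled by (\ref{QhRh_L2}) and the boundary part handled via $h_T=h_T^2\cdot h_T^{-1}$, the norm equivalence (\ref{norm-equ}), and the triangle inequality with Lemma \ref{Qhv_err} and (\ref{Rh_trb}), followed by the same triangle-inequality reduction for (\ref{Rh_0h})--(\ref{Nh_0h}). The only cosmetic difference is that you invoke the $\norm{v-Q_hv}_{0,h}$ bound as a citation while the paper re-derives it inline via the trace and projection inequalities; also note that (\ref{norm-equ}) holds on all of $V_h$, so your check that $w$ lies in $V_h^0$ or $\overline V_h$ is harmless but not needed for that step.
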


  \begin{proof}
    By the definition of the norm $\norm{\cdot}_{0,h}$, (\ref{QhRh_L2}), Lemma \ref{lemma-norm-equ}, (\ref{trbv_Qhv}) and (\ref{Rh_trb}), we have
    \begin{align*}
      &~\norm{Q_hv-\Pi_h^Rv}_{0,h}^2\\
      =&~ \sumT\norm{Q_0v-\Pi_0^Rv}_T^2+\sumT h\norm{Q_0v-\Pi_0^Rv-(Q_bv-\Pi_b^Rv)}_{\partial T}^2\\
      \lesssim&~ h^{2(m+1)}\norm{v}_{m+1}^2+h^2\sumT h^{-1}\norm{Q_0v-\Pi_0^Rv-(Q_bv-\Pi_b^Rv)}_{\partial T}^2\\
      \lesssim&~ h^{2(m+1)}\norm{v}_{m+1}^2+h^2\norm{Q_hv-\Pi_h^Rv}_{1,h}^2\\
      \lesssim&~ h^{2(m+1)}\norm{v}_{m+1}^2+h^2\trb {Q_hv-\Pi_h^Rv}^2\\
      \lesssim&~ h^{2(m+1)}\norm{v}_{m+1}^2+h^2(\trb {Q_hv-v}+\trb {v-\Pi_h^Rv})^2\\
      \lesssim&~h^{2(m+1)}\norm{v}_{m+1}^2+h^{2m+2}\norm{v}_{m+1}^2\\
      \lesssim&~h^{2(m+1)}\norm{v}_{m+1}^2.
    \end{align*}
    Using the definitions of $\norm{\cdot}_{0,h}$ and the projection $Q_b$, the trace inequality, and the projection inequality, we obtain
    \begin{align*}
      &~\norm{v-\Pi_h^Rv}_{0,h}^2\\
      \lesssim&~\norm{v-Q_hv}_{0,h}^2+\norm{Q_hv-\Pi_h^Rv}_{0,h}^2\\
      \lesssim&~\sumT \norm{v-Q_0v}_T^2+\sumT h\norm{v-Q_0v-(v-Q_bv)}_{\partial T}^2+h^{2(m+1)}\norm{v}_{m+1}^2\\
      \lesssim&~\sumT \norm{v-Q_0v}_T^2+\sumT h\norm{v-Q_0v}_{\partial T}^2+h^{2(m+1)}\norm{v}_{m+1}^2\\
      \lesssim&~\sumT \norm{v-Q_0v}_T^2+\sumT h^2\norm{\G (v-Q_0v)}_{ T}^2+h^{2(m+1)}\norm{v}_{m+1}^2\\
      \lesssim&~h^{2(m+1)}\norm{v}_{m+1}^2.
    \end{align*}
    We complete the proof of (\ref{QhRh_0h}) and (\ref{Rh_0h}). Analogously, we get (\ref{QhNh_0h}) and (\ref{Nh_0h}).
  \end{proof}


\begin{thebibliography}{10}

	\bibitem{MR1007396}
	{\sc D.~N. Arnold, L.~R. Scott, and M.~Vogelius}, {\em Regular inversion of the
	  divergence operator with {D}irichlet boundary conditions on a polygon}, Ann.
	  Scuola Norm. Sup. Pisa Cl. Sci. (4), 15 (1988), pp.~169--192.
	
	\bibitem{MR391538}
	{\sc F.~Brezzi}, {\em Sur la m\'{e}thode des \'{e}l\'{e}ments finis hybrides pour le
              probl\`eme biharmonique}, Numer. Math., 24 (1975), pp.~103--131.
	
	\bibitem{StokesSFWG}
	{\sc Y.~Feng, Y.~Liu, R.~Wang, and S.~Zhang}, {\em A stabilizer-free weak
	  {G}alerkin finite element method for the {S}tokes equations}, Adv. Appl.
	  Math. Mech., 14 (2022), pp.~181--201.
	
	\bibitem{gastaldi_sharp_1989}
	{\sc L.~Gastaldi and R.~H. Nochetto}, {\em Sharp maximum norm error estimates
	  for general mixed finite element approximations to second order elliptic
	  equations}, RAIRO Mod\'{e}l. Math. Anal. Num\'{e}r., 23 (1989), pp.~103--128.
	
	\bibitem{MR2817542}
	{\sc J.~Hu, Y.~Huang, and S.~Zhang}, {\em The lowest order differentiable
	  finite element on rectangular grids}, SIAM J. Numer. Anal., 49 (2011),
	  pp.~1350--1368.
	
	\bibitem{HJMFEM}
	{\sc C.~Johnson}, {\em On the convergence of a mixed finite-element method for
	  plate bending problems}, Numer. Math., 21 (1973), pp.~43--62.
	
	\bibitem{MR0386298}
	{\sc T.~Miyoshi}, {\em A finite element method for the solutions of fourth
	  order partial differential equations}, Kumamoto J. Sci. (Math.), 9 (1972/73),
	  pp.~87--116.
	
	\bibitem{BiharmonicMixFEM3}
	{\sc P.~Monk}, {\em A mixed finite element method for the biharmonic equation},
	  SIAM Journal on Numerical Analysis, 24 (1987), pp.~737--749.
	
	\bibitem{BiharmonicNCFEM}
	{\sc L.~S.~D. Morley*}, {\em The triangular equilibrium element in the solution
	  of plate bending problems}, The Aeronautical Quarterly,  (1968),
	  pp.~149--169.
	
	\bibitem{BiharmonicDGFEM}
	{\sc I.~Mozolevski, E.~S\"{u}li, and P.~R. B\"{o}sing}, {\em {$hp$}-version a
	  priori error analysis of interior penalty discontinuous {G}alerkin finite
	  element approximations to the biharmonic equation}, J. Sci. Comput., 30
	  (2007), pp.~465--491.
	
	\bibitem{BiharmonicWGMFEM}
	{\sc L.~Mu, J.~Wang, Y.~Wang, and X.~Ye}, {\em A weak {G}alerkin mixed finite
	  element method for biharmonic equations}, in Numerical solution of partial
	  differential equations: theory, algorithms, and their applications, vol.~45
	  of Springer Proc. Math. Stat., Springer, New York, 2013, pp.~247--277.
	
	\bibitem{BrinkmanWG}
	{\sc L.~Mu, J.~Wang, and X.~Ye}, {\em A stable numerical algorithm for the
	  {B}rinkman equations by weak {G}alerkin finite element methods}, J. Comput.
	  Phys., 273 (2014), pp.~327--342.
	
	\bibitem{BiharmonicWG}
	{\sc L.~Mu, J.~Wang, and X.~Ye}, {\em Weak {G}alerkin
	  finite element methods for the biharmonic equation on polytopal meshes},
	  Numer. Methods Partial Differential Equations, 30 (2014), pp.~1003--1029.
	
	\bibitem{MFEMTheory}
	{\sc J.~T. Oden}, {\em Generalized conjugate functions for mixed finite element
	  approximations of boundary value problems}, in The mathematical foundations
	  of the finite element method with applications to partial differential
	  equations ({P}roc. {S}ympos., {U}niv. {M}aryland, {B}altimore, {M}d., 1972),
	  Academic Press, New York-London, 1972, pp.~626--669.
	
	\bibitem{MR1191139}
	{\sc P.~Oswald}, {\em Hierarchical conforming finite element methods for the
	  biharmonic equation}, SIAM J. Numer. Anal., 29 (1992), pp.~1610--1625.
	
	\bibitem{CRMFEM}
	{\sc P.~C. RAVIART}, {\em A mixed finite element method for the biharmonic
	  equation}, Mathematical Aspects of Finite Elements in Partial Differential
	  Equations,  (1974), pp.~125--145.
	
	\bibitem{wang_asymptotic_1989}
	{\sc J.~Wang}, {\em Asymptotic expansions and {$L^\infty$}-error estimates for
	  mixed finite element methods for second order elliptic problems}, Numer.
	  Math., 55 (1989), pp.~401--430.
	
	\bibitem{PossionMixedWG}
	{\sc J.~{}Wang and X.~Ye}, {\em A weak galerkin mixed finite element method for
	  second order elliptic problems}, 83, pp.~2101--2126.
	
	\bibitem{PossionWG}
	{\sc J.~Wang and X.~Ye}, {\em A weak galerkin finite element method for
	  second-order elliptic problems}, Journal of Computational and Applied
	  Mathematics, 241 (2013), pp.~103--115.
	
	\bibitem{StokesWG}
	{\sc J.~Wang and X.~Ye}, {\em A weak {G}alerkin finite element method for the
	  stokes equations}, Adv. Comput. Math., 42 (2016), pp.~155--174.
	
	\bibitem{PossionSFWG}
	{\sc X.~Ye and S.~Zhang}, {\em A stabilizer-free weak {G}alerkin finite element
	  method on polytopal meshes}, J. Comput. Appl. Math., 371 (2020), pp.~112699,
	  9.
	
	\bibitem{BiharmonicSFWG}
	{\sc X.~Ye and S.~Zhang}, {\em A stabilizer free
	  weak {G}alerkin method for the biharmonic equation on polytopal meshes}, SIAM
	  J. Numer. Anal., 58 (2020), pp.~2572--2588.
	
	\bibitem{BiharmonicWGReOrder}
	{\sc R.~Zhang and Q.~Zhai}, {\em A weak {G}alerkin finite element scheme for
	  the biharmonic equations by using polynomials of reduced order}, J. Sci.
	  Comput., 64 (2015), pp.~559--585.
	
	\bibitem{BiharmonicCFEM2}
	{\sc S.~Zhang}, {\em A {$C_1$}-{$P_2$} finite element without nodal basis},
	  M2AN Math. Model. Numer. Anal., 42 (2008), pp.~175--192.
	
	\bibitem{SFC0WGBiharmonic}
	{\sc P.~Zhu, S.~Xie, and X.~Wang}, {\em A stabilizer-free {$C^0$} weak
	  {G}alerkin method for the biharmonic equations}, Sci. China Math., 66 (2023),
	  pp.~627--646.
	
\end{thebibliography}
\end{document}